\numberwithin{equation}{section}
\newtheorem{theorem}[equation]{Theorem}
\newtheorem{proposition}[equation]{Proposition}
\newtheorem{lemma}[equation]{Lemma}
\newtheorem{corollary}[equation]{Corollary}
\theoremstyle{definition}
\newtheorem{rmk}[equation]{Remark}
\newenvironment{remark}[1][]{\begin{rmk}[#1] \pushQED{\qed}}{\popQED \end{rmk}}
\newtheorem{eg}[equation]{Example}
\newtheorem{defn}[equation]{Definition}
\newenvironment{definition}[1][]{\begin{defn}[#1]\pushQED{\qed}}{\popQED \end{defn}}
\newenvironment{subeqns}[1][]{\addtocounter{equation}{-1}
\begin{subequations}

}{\end{subequations}}
\renewcommand{\thesubsection}{%
  \ifnum\c@subsection<1 \@arabic\c@section
  \else \thesection.\@arabic\c@subsection
  \fi
}
\newcommand{\bC}{\mathbf{C}}
\newcommand{\cC}{\mathcal{C}}
\newcommand{\cE}{\mathcal{E}}
\newcommand{\rE}{\mathrm{E}}
\newcommand{\sE}{\mathscr{E}}
\newcommand{\bF}{\mathbf{F}}
\newcommand{\cF}{\mathcal{F}}
\newcommand{\bG}{\mathbf{G}}
\newcommand{\cG}{\mathcal{G}}
\newcommand{\rH}{\mathrm{H}}
\newcommand{\rJ}{\mathrm{J}}
\newcommand{\bK}{\mathbf{K}}
\newcommand{\cL}{\mathcal{L}}
\newcommand{\cM}{\mathcal{M}}
\newcommand{\cO}{\mathcal{O}}
\newcommand{\bP}{\mathbf{P}}
\newcommand{\bQ}{\mathbf{Q}}
\newcommand{\cQ}{\mathcal{Q}}
\newcommand{\bR}{\mathbf{R}}
\newcommand{\cR}{\mathcal{R}}
\newcommand{\rR}{\mathrm{R}}
\newcommand{\cT}{\mathcal{T}}
\newcommand{\rU}{\mathrm{U}}
\newcommand{\cV}{\mathcal{V}}
\newcommand{\bZ}{\mathbf{Z}}
\newcommand{\fe}{\mathfrak{e}}
\newcommand{\re}{\mathrm{e}}
\newcommand{\fg}{\mathfrak{g}}
\newcommand{\fh}{\mathfrak{h}}
\newcommand{\rh}{\mathrm{h}}
\newcommand{\bk}{\mathbf{k}}
\renewcommand{\phi}{\varphi}
\renewcommand{\tilde}[1]{\widetilde{#1}}
\newcommand{\ol}[1]{\overline{#1}}
\newcommand{\arxiv}[1]{\href{http://arxiv.org/abs/#1}{{\tt arXiv:#1}}}
\def\Ddots{\mathinner{\mkern1mu\raise\p@
\vbox{\kern7\p@\hbox{.}}\mkern2mu
\raise4\p@\hbox{.}\mkern2mu\raise7\p@\hbox{.}\mkern1mu}}
\DeclareMathOperator{\im}{image}
\renewcommand{\hom}{\operatorname{Hom}}
\DeclareMathOperator{\trace}{Tr}
\DeclareMathOperator{\rank}{rank}
\DeclareMathOperator{\ext}{Ext}
\DeclareMathOperator{\Pf}{Pf}
\DeclareMathOperator{\Sym}{Sym}
\DeclareMathOperator{\Aut}{Aut}
\DeclareMathOperator{\Spec}{Spec}
\DeclareMathOperator{\Jac}{Jac}
\DeclareMathOperator{\Pic}{Pic}
\newcommand{\GL}{\mathbf{GL}}
\newcommand{\SL}{\mathbf{SL}}
\newcommand{\Gr}{\mathbf{Gr}}
\newcommand{\fsl}{\mathfrak{sl}}
\newcommand{\fgl}{\mathfrak{gl}}
\newcommand{\PGL}{\mathbf{PGL}}
\newcommand{\pgl}{\mathfrak{pgl}}
\newcommand{\SU}{\mathrm{SU}}
\newcommand{\bmu}{\boldsymbol{\mu}}
\newcommand{\Alb}{\operatorname{Alb}}
\newcommand{\su}{\mathfrak{su}}
\newcommand{\git}{/\!/}
\newcommand{\df}[1]{{\bf \textsf{#1}}}
\title{Vector bundles on genus $2$ curves and trivectors}
\date{September 5, 2017}
\author{Eric M. Rains}
\address{Department of Mathematics, California Institute of Technology}
\email{rains@caltech.edu}
\author{Steven V Sam}
\address{Department of Mathematics, University of Wisconsin, Madison}
\email{svs@math.wisc.edu}
\thanks{SS was partially supported by a Miller research fellowship and NSF DMS-1500069.}
\subjclass[2010]{%
14H60, 
15A72
}
\begin{document}

\maketitle

\begin{abstract}
Given a complex curve $C$ of genus 2, there is a well-known relationship between the moduli space of rank 3 semistable bundles on $C$ and a cubic hypersurface known as the Coble cubic. Some of the aspects of this is known to be related to the geometric invariant theory of the third exterior power of a 9-dimensional complex vector space. We extend this relationship to arbitrary fields and study some of the connections to invariant theory, which will be studied more in-depth in a followup paper.
\end{abstract}

\section{Introduction}

Let $C$ be a smooth genus $2$ curve over a field $\bk$ with a Weierstrass point $P$ defined over $\bk$. In the case when $\bk$ is algebraically closed and of characteristic $0$, the following situation has been studied in \cite{minh, ortega}. First, using $(C,P)$, we get a canonically defined divisor $\Theta$ on the Jacobian $\rJ(C)$ of degree $0$ line bundles on $C$. We correspondingly get an embedding $\rJ(C) \subset |3\Theta|^* \cong \bP^8$. Then there exists a unique cubic hypersurface in $\bP^8$, the \df{Coble cubic} of $C$, whose singular locus is $\rJ(C)$ (see \cite{barth, coble} and also \cite{beauville} for a concise proof). Furthermore, let $\SU_3(C)$ be the moduli space of semistable rank 3 bundles on $C$ with trivial determinant. Then there is a natural surjective map $\SU_3(C) \to |3\Theta|$ of degree 2 (see \S\ref{sec:genus2-gen}) whose branch locus is a sextic hypersurface. Dolgachev conjectured that these two hypersurfaces are projectively dual to one another, and this was proven in \cite{ortega} and independently in \cite{minh}.

In \cite{GS, GSW}, it was shown that the construction of the Coble cubic is closely connected to the invariant theory of the group $\SL_9(\bk)$ acting on $\bigwedge^3(\bk^9)$ and also of the group $\SL_8(\bk)$ acting on $\bigwedge^2(\bk^8)$.
In fact, the construction associates to each stable element in $\bigwedge^3(\bk^9)$ a smooth curve of genus $2$, and this construction is surjective on moduli. This naturally leads to the question of whether one can construct the trivector from the curve. This question is studied in \cite{w39-paper}, where the approach is relatively elementary, but requires one to solve an overdetermined system of linear equations to reconstruct the trivector, and it would thus be of interest to have a more direct construction.  

The construction in \cite{GS,GSW} is based on viewing an element of $\bigwedge^3(\bk^9)$ as a family of elements of $\bigwedge^2(\bk^8)$ (i.e., as a section of $\Omega^2(3)$ on $\bP^8$), but we can equally well view it as a family of elements of $\bigwedge^3(\bk^8)$ (i.e., as a section of $\bigwedge^3 \cQ$ where $\cQ$ is the tautological quotient bundle on $\bP^8$).  In characteristic $0$, the stabilizer of a generic element of $\bP(\bigwedge^3(\bk^8))$ in $\PGL_9(\bk)$ can be shown to be (geometrically) isomorphic to $\Aut(\fsl_3)$ (this was shown implicitly in \cite[\S 6.2]{luna-richardson}, where the projective normalizer in $\SL_8$ was computed; for an alternate proof valid in arbitrary characteristic, see Proposition~\ref{prop:alpha-stab} and Corollary~\ref{cor:sl-dense} below), and to be the unique trivector stabilized by that group.  Since this group has two components, this induces a natural double cover of $\bigwedge^3(\bk^8)$, which in our setting pulls back to a double cover of $\bP^8$, which furthermore comes with a (generic) identification of its tangent spaces with $\fsl_3$.  This both suggests that this double cover of $\bP^8$ should be a moduli space of rank 3 vector bundles (in particular, $\SU_3(C)$, since this is known to be a double cover of $\bP^8$), and that we should be able to reconstruct the trivector from a suitable family of trivectors in the tangent bundle of $\SU_3(C)$.  

Our aim in the present article is to make these heuristics precise and to extend these results to an arbitrary ground field $\bk$. Namely, we show that there is a degree $2$ map $\SU_3(C) \to |3\Theta|$ of degree $2$ which is branched along an irreducible sextic hypersurface. Outside of characteristic $2$, this is a reduced hypersurface and its projective dual is a cubic hypersurface whose singular locus is isomorphic to $\rJ(C)$ (Theorem~\ref{thm:coble-duality}); for characteristic $2$, see Remark~\ref{rmk:char2}. We also explain how all of these schemes may be constructed directly from a vector $\gamma \in \bigwedge^3(\bk^9)$ which is stable with respect to the action of $\SL_9(\bk)$. 

\section{Invariant theory of $\wedge^3(8)$} \label{sec:w38}

Let $V_8$ be an $8$-dimensional vector space. Consider the action of $\GL(V_8)$ on $\bigwedge^3 V_8$. There is a unique $\GL(V_8)$-invariant hypersurface if  $\bk$ has characteristic $0$, its degree is $16$, and $\GL(V_8)$ acts transitively on its complement (when $\bk = \ol{\bk}$) \cite[\S 5, Prop. 10]{satokimura}. So we can clear denominators to get a multiple that is defined over $\bZ$ (we will isolate a particular multiple in Lemma~\ref{lem:squaremod4}). We call this hypersurface the hyperdiscriminantal locus.

For any $n$, there is a natural trilinear form $\bigwedge^3 \pgl_n \to \bk$ defined by \begin{align} \label{eqn:trace-form}
(X_1,X_2,X_3) \mapsto \trace(\tilde{X}_1\tilde{X}_2\tilde{X}_3) - \trace(\tilde{X}_2 \tilde{X}_1 \tilde{X}_3)
\end{align}
where $\tilde{X}_i$ is a preimage of $X_i$ in $\fgl_n$. It is easily seen that it is skew-symmetric and that the value is independent of the choices of lifts. 

\begin{lemma} \label{lem:squaremod4}
There is a unique multiple of the degree $16$ equation with integer coefficients which is a nonzero square modulo $4$ and nonzero modulo every odd prime. Furthermore, this equation does not vanish on the trilinear form defined in \eqref{eqn:trace-form} for $n=3$.
\end{lemma}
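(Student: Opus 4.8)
The plan is to reduce both assertions to a single arithmetic statement about the reduction of the invariant modulo $4$, and then treat the nonvanishing at the trilinear form separately. First I would fix notation: by \cite[\S5, Prop.~10]{satokimura} the space of degree-$16$ invariants is one-dimensional in characteristic $0$, and since $\bigwedge^3 V_8$ is defined over $\bZ$ this line is defined over $\bQ$, so there is a generator $F \in \bZ[\bigwedge^3 V_8]$ of content $1$, unique up to sign. Every integer multiple of the degree-$16$ equation is $cF$, and since $F$ is primitive we have $cF \equiv 0 \pmod p$ for an odd prime $p$ exactly when $p \mid c$; hence ``nonzero modulo every odd prime'' is equivalent to $c = \pm 2^k$. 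To handle the square-mod-$4$ condition I would record the elementary fact that squaring in $(\bZ/4)[\underline{x}]$ depends only on coefficients modulo $2$: if $G \in \bF_2[\underline{x}]$ has monomials $m_i$ and $\hat G$ is its lift with coefficients in $\{0,1\}$, then $\hat G^2 \equiv \sum_i m_i^2 + 2\sum_{i<j} m_i m_j \pmod 4$, and every square in $(\bZ/4)[\underline{x}]$ arises this way for the unique $G$ equal to the square root of its reduction modulo $2$. In particular a nonzero square modulo $4$ reduces to a nonzero perfect square modulo $2$; this rules out $k \ge 1$ (for $k \ge 2$ the reduction is $0$, and for $k=1$ the class $2F \bmod 4$ is nonzero but even, while the only even square modulo $4$ is $0$), leaving $c = \pm 1$.

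This reduces the first claim to two facts about $F$: (i) $F \bmod 2 = Q^2$ for a unique degree-$8$ polynomial $Q \in \bF_2[\bigwedge^3 V_8]$, and (ii) $F \equiv \pm \hat Q^2 \pmod 4$. Granting these, exactly one sign makes $cF \equiv \hat Q^2 \pmod 4$ an actual square: the two signs are distinguished by whether the diagonal coefficients of the $m_i^2$ are $\equiv 1$ or $\equiv 3$, and since $Q \ne 0$ at least one such term occurs, so $\hat Q^2 \not\equiv -\hat Q^2$; this gives existence and uniqueness of $c$. For (i) I would construct the square root directly: in characteristic $2$ there should be a semi-invariant of weight $\det^3$ and degree $8$ — exactly half the weight and degree of $F$ — arising from a Pfaffian-type construction that becomes a perfect square only in characteristic $2$ (the same mechanism by which a determinant degenerates to a square there); one-dimensionality of the space of weight-$\det^6$ invariants then forces $F \equiv \lambda Q^2$, and primitivity fixes the unit $\lambda$.

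The main obstacle is (ii), lifting the identity from modulo $2$ to modulo $4$: one must show that the ``off-diagonal'' corrections to $F$ are precisely the doubled cross terms $2\sum_{i<j} m_i m_j$ and that the diagonal signs are uniform. I expect this to require an explicit description of $F$ modulo $4$ in coordinates — a finite though sizeable computation — rather than a purely representation-theoretic argument, and it is here that the distinguished multiple is genuinely pinned down. (It is also conceivable that (i) and (ii) are proved together by a single reduction of an explicit formula for $F$ modulo $4$, which would be the most direct route.)

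For the final assertion, identify $V_8 = \pgl_3$ via the trace form, under which the trilinear form \eqref{eqn:trace-form} for $n=3$ becomes a trivector $\omega_0 \in \bigwedge^3 V_8$. Since $\GL(V_8)$ acts transitively on the complement of $\{F=0\}$ in characteristic $0$ (loc.\ cit.), it suffices to show that the orbit of $\omega_0$ is open. The adjoint action of $\fsl_3$ preserves $\omega_0$, because the bracket and the trace form are both $\ad$-invariant, so the infinitesimal stabilizer contains $\ad(\fsl_3) \cong \pgl_3$, of dimension $8 = \dim \GL(V_8) - \dim \bigwedge^3 V_8$. By Proposition~\ref{prop:alpha-stab} the stabilizer is exactly this $8$-dimensional subalgebra, so the orbit has dimension $56$ and is therefore open; hence $\omega_0 \notin \{F=0\}$ and $cF(\omega_0) = \pm F(\omega_0) \ne 0$.
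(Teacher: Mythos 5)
Your reduction of the first claim to the assertion that the primitive generator $F$ is $\pm(\text{a square})$ modulo $4$ is correct, and your argument for the nonvanishing at the trace form (stabilizer $=\Aut(\PGL_3)$ by Proposition~\ref{prop:alpha-stab}, hence a $56$-dimensional, dense orbit) is essentially the paper's Corollary~\ref{cor:sl-dense}. But the central step --- your items (i) and (ii) --- is not proved. For (i) you posit a degree-$8$, weight-$\det^3$ semi-invariant in characteristic $2$ ``arising from a Pfaffian-type construction'' without exhibiting it, and for (ii), which is strictly stronger than (i) (being a square mod $2$ does not imply being a square mod $4$: $x^2+2xy$ is a counterexample), you explicitly defer to ``an explicit description of $F$ modulo $4$ in coordinates --- a finite though sizeable computation.'' That computation is the entire content of the lemma and is not carried out; for a degree-$16$ polynomial in $56$ variables it is also not obviously feasible, so as written the proposal has a genuine gap rather than a routine omission.

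The paper closes this gap by a geometric argument that produces the decomposition $\iota=f^2-4g$ without any coordinate computation. Over $Y=\Spec(\bZ[\bigwedge^3(\bZ^8)][1/\iota])$ the rank-$4$ locus of the trilinear form is a form of $\bP^2\times\bP^2$; the two line-bundle classes with Hilbert polynomial $(k+1)(k+2)^2(k+3)/4$ give an \'etale double cover $X\to Y$, necessarily of the form $y^2+fy+g=0$ with $f^2-4g$ a unit, hence $\pm\iota^{\ell}$. After reducing to $\ell\le 1$, the real points corresponding to $\alpha_{\fsl_3}$ (which has two real preimages) and $\alpha_{\su_3}$ (which has none, its rank-$4$ locus being a restriction of scalars of $\bP^2_{\bC}$) eliminate $1$, $-1$, and $-\iota$, forcing $\iota=f^2-4g\equiv f^2\pmod 4$ with $f$ odd by primitivity. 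If you want to salvage your approach you would need either to carry out the mod-$4$ computation in full or to find a structural source for the identity $F\equiv f^2\pmod 4$; the \'etale double cover is precisely such a source.
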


We postpone the proof of this fact until the end of the section.

Let $f_{16}(x)$ be this multiple of the equation and write $f_{16} = f(x)^2 + 4g(x)$ where $f,g$ are nonzero polynomials with integer coefficients. Introduce a new variable $y$ of degree $8$ and consider the equation
\begin{align} \label{eqn:double}
y^2 + f(x)y - g(x) = 0.
\end{align}
This defines a double cover of $\bigwedge^3 V_8$ defined over $\bZ$ which is branched along $f_{16}(x)=0$, and which is a reduced scheme modulo any prime $p$.

Let $\fe_8$ denote the split Lie algebra of type $\rE_8$. If we coarsen the root space grading by just taking the coefficient of $\alpha_2$ (in Bourbaki notation), we get a $\bZ$-graded decomposition $\fe_8 = \bigoplus_{i=-3}^3 \fg_i$ where
\[
\fg_{-1} = \bigwedge^3 V_8^*, \qquad \fg_0 = \fgl(V_8), \qquad \fg_1 = \bigwedge^3 V_8.
\]

\begin{proposition} \label{prop:proj-dual-w37}
Let $\bk$ be a field of characteristic $0$.
\begin{enumerate}[\rm \indent (a)]
\item There are finitely many orbits of the group $\GL(V_8)$ on both $\fg_1$ and $\fg_{-1}$.
\item There is a bijection between orbit $($closures$)$ in $\fg_1$ and $\fg_{-1}$. Namely, given an orbit $O \subset \fg_1$, send it to the orbit $O' \subset \fg_{-1}$ where $O' = \{x \in \fg_{-1} \mid [x,y]=0 \text{ for all $y \in \fg_1$}\}$. If we projectivize $\fg_1$ and $\fg_{-1}$, this bijection coincides with projective duality.
\item Furthermore, if $O'$ is the orbit of highest weight vectors, then $O$ has codimension $1$.
\end{enumerate}
\end{proposition}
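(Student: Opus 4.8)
The three parts are best handled in sequence, with the real content in (c).

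For part (a), the key observation is that the coarsened grading satisfies $\fg_i=0$ for $|i|\ge 4$, so for $x\in\fg_{\pm1}$ the map $\ad x$ shifts the grading by $\pm1$ and hence $(\ad x)^7=0$; thus \emph{every} element of $\fg_1$ and of $\fg_{-1}$ is nilpotent in $\fe_8$. I would then invoke Vinberg's theorem on $\bZ$-graded semisimple Lie algebras in characteristic $0$: the reductive group $G_0$ integrating $\fg_0=\fgl(V_8)$ (concretely $\GL(V_8)$, whose center acts by scaling) has only finitely many orbits of nilpotent elements in each graded piece. Since all elements of $\fg_{\pm1}$ are nilpotent, this yields finitely many $\GL(V_8)$-orbits on $\fg_1$ and on $\fg_{-1}$ (this is the Vinberg--Elashvili classification for $\bigwedge^3$ of an $8$-space).

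For part (b), the Killing form $B$ of $\fe_8$ restricts to a nondegenerate $G_0$-invariant pairing $\fg_{-1}\times\fg_1\to\bk$, identifying $\fg_{-1}\cong\fg_1^*$ as $\GL(V_8)$-representations; thus $\bP(\fg_{-1})$ is the dual projective space of $\bP(\fg_1)$ and projective duality makes sense. The engine is the identity $B(x,[z,y])=-B([x,y],z)$ for $z\in\fg_0$, which follows from invariance of $B$. Because the grading element lies in $\fg_0$, the affine tangent space to $\GL(V_8)\cdot y$ at a smooth point $y$ is exactly $[\fg_0,y]$; hence $[x]$ is conormal to $\overline{\GL(V_8)\cdot y}$ at $[y]$ iff $B(x,[\fg_0,y])=0$, and by the displayed identity together with nondegeneracy of $B|_{\fg_0}$ (as $\fg_0$ is reductive) this is equivalent to $[x,y]=0$. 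Therefore the projective dual of an orbit closure $\overline O\subset\bP(\fg_1)$ is the closure of the image of its conormal variety $\{([y],[x]):[x,y]=0\}$ in $\bP(\fg_{-1})$; this is the intended reading of the set $O'$ in the statement. Being the image of an irreducible $G_0$-stable variety, it is an irreducible $\GL(V_8)$-stable closed subset, hence by (a) a single orbit closure $\overline{O'}$. Reflexivity (biduality), valid in characteristic $0$, shows $O\mapsto O'$ is an involution and thus a bijection.

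For part (c), by biduality it suffices to show that the projective dual of the highest-weight orbit closure $\overline{O'}$ — the cone over the Grassmannian $\Gr(3,V_8^*)$ of decomposable trivectors, of projective dimension $15$ in $\bP^{55}$ — is a hypersurface, i.e.\ that this Grassmannian is not dual-defective. I would use the standard contact criterion: the dual is a hypersurface as soon as some smooth point $[y]$ carries a tangent hyperplane of nondegenerate contact, equivalently the second fundamental form of $\Gr(3,V_8^*)$ at $[y]$, paired with a generic conormal covector $x$, is a nondegenerate quadratic form on $T_{[y]}\cong\hom(W,V_8^*/W)$. Writing $W=\langle e_1,e_2,e_3\rangle$, $U=\langle e_4,\dots,e_8\rangle$ and expanding the Plücker parametrization to second order, this pairing is the explicit quadratic form
\[
Q(u_1,u_2,u_3)=\beta_1(u_2,u_3)-\beta_2(u_1,u_3)+\beta_3(u_1,u_2),\qquad u_i\in U,
\]
where $\beta_1,\beta_2,\beta_3\in\bigwedge^2U^*$ are the components of $x$ with a single leg in $W$. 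The main obstacle is exactly to check that $Q$ is nondegenerate for generic $\beta_i$: its Gram matrix is a symmetric $15\times15$ block matrix with the three alternating $5\times5$ forms $\beta_i$ placed off the diagonal, and although each $\beta_i$ is singular (alternating of odd size $5$), one verifies on an explicit choice that $\det Q\neq0$, so by semicontinuity the generic contact is nondegenerate and $\overline O=\overline{O'}{}^\vee$ is a hypersurface. As a consistency check this hypersurface is $\GL(V_8)$-invariant and irreducible, hence must coincide with the unique degree-$16$ invariant hypersurface recalled at the start of the section; equivalently one can argue backwards, using that its complement is a single open orbit to exhibit the codimension-$1$ orbit directly and then identify its dual with the decomposables.
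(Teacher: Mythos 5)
Your architecture is sound and genuinely different from what the paper does: the paper disposes of all three parts by citation to Tevelev's survey (the Pyasetskii bijection for (a) and (b), and a classification theorem for (c)), whereas you essentially reprove the relevant statements from scratch. Parts (a) and (b) are correct as you argue them: the vanishing of $\fg_i$ for $|i|\ge 4$ does force every element of $\fg_{\pm 1}$ to be ad-nilpotent, Vinberg's finiteness theorem for graded Lie algebras in characteristic $0$ then gives (a), and your identification of the conormal condition with $[x,y]=0$ via invariance of the Killing form and nondegeneracy of its restriction to $\fg_0$, combined with finiteness of orbits and reflexivity, gives (b). What you buy is a self-contained treatment (and an explicit description of the bijection as the Pyasetskii pairing); what the paper buys is brevity.

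The gap is in (c), and it sits exactly where the content of (c) lives. You correctly reduce the claim to the nondegeneracy, for some choice of $\beta_1,\beta_2,\beta_3\in\bigwedge^2 U^*$ with $\dim U=5$, of the quadratic form $Q(u_1,u_2,u_3)=\beta_1(u_2,u_3)-\beta_2(u_1,u_3)+\beta_3(u_1,u_2)$ on the $15$-dimensional space $U^{\oplus 3}$ --- but then you write that ``one verifies on an explicit choice that $\det Q\ne 0$'' without giving the choice or performing the verification. This cannot be waved away: the analogous statement fails for other Grassmannians (for instance $\Gr(2,5)$ is dual-defective precisely because the corresponding contact form is always degenerate), so a computation or structural argument specific to $\Gr(3,8)$ is unavoidable, and it is the entire substance of (c). Concretely, the radical of $Q$ is cut out by the $15$ linear conditions $\beta_2v_1=\beta_1v_2$, $\beta_3v_1=\beta_1v_3$, $\beta_3v_2=\beta_2v_3$ (viewing each $\beta_i$ as a skew map $U\to U^*$), and you must exhibit $\beta_1,\beta_2,\beta_3$ for which these have no common nonzero solution. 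Until that determinant is actually evaluated --- or the hypersurface property of the dual of $\Gr(3,8)$ is imported from the literature, as the paper does via Tevelev's Theorem 9.19 --- part (c) is not proved.
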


\begin{proof}
(a) and (b) follow from the discussion in \cite[\S 2.2.A]{tevelev}, namely Corollary 2.9 and the following text. (c) follows from \cite[Theorem 9.19]{tevelev}.
\end{proof} 

\begin{proposition} \label{prop:w38-kernel}
\begin{enumerate}[\rm \indent (a)]
\item Pick $\gamma \in \bigwedge^3 V_8$ and a $5$-dimensional subspace $U$. Pick a basis $u_1, u_2, u_3$ for $(V_8/U)^*$ and let $\gamma' = u_1 \wedge u_2 \wedge u_3$. Then $[\gamma, \gamma'] = 0$ if and only if $\gamma \in \bigwedge^3 U + \bigwedge^2 U \otimes (V_8 / U)$.

\item $\gamma$ is unstable with respect to $\SL(V_8)$ if and only if there is a $3$-dimensional space $V_3 \subset V_8^*$ such that $\gamma(v_1, v_2, -)$ is identically $0$ for any $v_1, v_2 \in V_3$. 

\item The unstable locus is an irreducible hypersurface and is set-theoretically defined by $f_{16}$.
\end{enumerate}
\end{proposition}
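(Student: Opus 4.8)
The three parts are closely linked, and the key fact underlying all of them is that the bracket $\fg_1\times\fg_{-1}\to\fg_0$ is, up to scalar, the unique $\GL(V_8)$-equivariant contraction $\bigwedge^3 V_8\otimes\bigwedge^3 V_8^*\to V_8\otimes V_8^*=\fgl(V_8)$, sending $(a_1\wedge a_2\wedge a_3)\otimes(u_1\wedge u_2\wedge u_3)$ to $\sum_{i,j}(-1)^{i+j}\det(\langle a_k,u_l\rangle)_{k\ne i,\,l\ne j}\,a_i\otimes u_j$. For (a) the plan is to split $V_8=U\oplus W$ with $W\cong V_8/U$ of dimension $3$, so that $u_1,u_2,u_3$ is a basis of $W^*\subset V_8^*$, and to decompose $\bigwedge^3 V_8=\bigwedge^3U\oplus(\bigwedge^2U\otimes W)\oplus(U\otimes\bigwedge^2W)\oplus\bigwedge^3W$. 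Since each $u_l$ annihilates $U$, a $2\times2$ minor $\det(\langle a_k,u_l\rangle)$ can be nonzero only if at least two of the contracted factors lie in $W$; this forces the contributions of $\bigwedge^3U$ and of $\bigwedge^2U\otimes W$ to vanish, while a direct check shows the other two summands contract to nonzero endomorphisms. Hence $[\gamma,\gamma']=0$ exactly when $\gamma\in\bigwedge^3U\oplus(\bigwedge^2U\otimes W)=\bigwedge^2U\wedge V_8$, and this subspace is manifestly $\bigwedge^3U+\bigwedge^2U\otimes(V_8/U)$, independent of the splitting.

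For (b) I would read $\gamma$ as an alternating trilinear form on $V_8^*$. Given a $3$-dimensional $V_3\subset V_8^*$, set $U=V_3^{\perp}\subset V_8$ (of dimension $5$) and $\gamma'=v_1\wedge v_2\wedge v_3$ for a basis of $V_3$; the same graded computation as in (a) shows that $\gamma(v_1,v_2,-)\equiv 0$ for all $v_1,v_2\in V_3$ if and only if $\gamma\in\bigwedge^2U\wedge V_8$, i.e.\ exactly the condition $[\gamma,\gamma']=0$ of (a). Thus (b) says that $\gamma$ is $\SL(V_8)$-unstable precisely when some decomposable $\gamma'\in\fg_{-1}$ satisfies $[\gamma,\gamma']=0$. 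The implication ``$\Leftarrow$'' I would prove by Hilbert--Mumford with an explicit cocharacter: let $\lambda\colon\bG_m\to\SL(V_8)$ act by $t^{3}$ on $U$ and $t^{-5}$ on $W$ (trace $5\cdot3+3\cdot(-5)=0$). On $\bigwedge^3U$ and $\bigwedge^2U\otimes W$ it has weights $9$ and $1$, both positive, so for $\gamma\in\bigwedge^2U\wedge V_8$ we get $\lim_{t\to0}\lambda(t)\gamma=0$ and $\gamma$ is unstable.

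The reverse implication of (b) and all of (c) I would deduce from the geometry of the invariant hypersurface. By \cite[\S5, Prop.\ 10]{satokimura}, $(\GL(V_8),\bigwedge^3 V_8)$ is prehomogeneous with the complement of the dense orbit being the unique invariant hypersurface $\{f_{16}=0\}$; since the only non-constant $\SL(V_8)$-invariants are the powers of $f_{16}$, the unstable locus is set-theoretically exactly $\{f_{16}=0\}$. On the other hand, the orbit of highest weight vectors in $\fg_{-1}$ is the cone of nonzero decomposable trivectors, i.e.\ the affine cone over the Pl\"ucker-embedded $\Gr(3,V_8^*)$, so by Proposition~\ref{prop:proj-dual-w37}(c) its projective dual is a hypersurface $\Gr(3,V_8^*)^{\vee}\subset\bP(\bigwedge^3 V_8)$. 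Being the dual of an irreducible $\GL(V_8)$-stable variety it is itself irreducible and $\GL(V_8)$-invariant, so by uniqueness it coincides with $\{f_{16}=0\}$; this yields the irreducibility and the $f_{16}$-description in (c). Finally, the affine tangent cone to $\Gr(3,V_8^*)$ at $[v_1\wedge v_2\wedge v_3]$ is $\bigwedge^2V_3\wedge V_8^*$ with $V_3=\langle v_1,v_2,v_3\rangle$, so $[\gamma]$ lies on the dual hypersurface precisely when $\langle\gamma,\bigwedge^2V_3\wedge V_8^*\rangle=0$ for some such $V_3$, i.e.\ when $\gamma(v_1,v_2,-)\equiv 0$ on $V_3$. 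Thus $\gamma$ unstable $\Rightarrow f_{16}(\gamma)=0\Rightarrow[\gamma]\in\Gr(3,V_8^*)^{\vee}\Rightarrow$ such a $V_3$ exists, completing (b).

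The routine parts are (a) and the ``$\Leftarrow$'' direction of (b). The main obstacle is the chain in the last paragraph: identifying the invariant-theoretic hypersurface $\{f_{16}=0\}$ of \cite{satokimura} with the geometric dual variety $\Gr(3,V_8^*)^{\vee}$ of Proposition~\ref{prop:proj-dual-w37}(c), and then promoting membership in the dual variety to genuine tangency at an honest point $[v_1\wedge v_2\wedge v_3]$. The latter uses that the conormal variety of the smooth Grassmannian is proper and surjects onto the dual, so every $\gamma$ with $f_{16}(\gamma)=0$ is tangent at some point and hence yields a $V_3$. This is also the step that relies on characteristic $0$ (as do \cite{satokimura} and Proposition~\ref{prop:proj-dual-w37}), whereas part (a) and the easy direction of (b) are valid over an arbitrary field.
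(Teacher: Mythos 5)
Your treatment of (a) and of the implication ``existence of $V_3$ $\Rightarrow$ unstable'' matches the paper's: the paper phrases (a) via equivariance of the bracket for the parabolic stabilizing $U$ rather than an explicit contraction formula, and uses the very same cocharacter $(t^3,\dots,t^3,t^{-5},t^{-5},t^{-5})$. Your characteristic-zero argument for the converse and for (c) --- identifying $\{f_{16}=0\}$ with the projective dual of the Pl\"ucker-embedded $\Gr(3,V_8^*)$ and using surjectivity of the conormal variety onto the dual to produce an honest tangency point, hence a $V_3$ --- is sound and close in spirit to the paper, which likewise invokes Proposition~\ref{prop:proj-dual-w37} at exactly this point.

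The genuine gap is that you stop at characteristic $0$. The proposition carries no characteristic hypothesis, and it cannot: it is applied in arbitrary characteristic in Definition~\ref{def:Dgamma}, in the proof of Proposition~\ref{prop:proj-dual}, and hence in Theorem~\ref{thm:coble-duality}, which is the paper's main extension to general ground fields. Everything you lean on for the hard direction --- Sato--Kimura, the identification of the unstable locus with $\{f_{16}=0\}$ via ``all invariants are powers of $f_{16}$'', and Proposition~\ref{prop:proj-dual-w37} itself --- is a characteristic-zero statement. The paper closes the gap with two further arguments you would need to supply. First, it forms the incidence bundle $\cE=\bigwedge^3\cR_5+\bigwedge^2\cR_5\otimes(V_8/\cR_5)$ over $\Gr(5,V_8)$ and observes that the degree of the polynomial $n\mapsto\chi(\Gr(5,V_8);\Sym^n(\cE^*))$ is independent of characteristic by flatness, so the image of the total space of $\cE$ in $\bigwedge^3 V_8$ --- an irreducible closed subset contained in the unstable locus --- has codimension $1$ in every characteristic, not just in characteristic $0$ where this follows from duality. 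Second, to show that every unstable $\gamma$ in positive characteristic actually lies in this image (equivalently, admits a $V_3$), the paper lifts a destabilizing one-parameter subgroup, after diagonalizing it, together with $\gamma$ itself to a complete DVR $R$ with characteristic-zero fraction field, applies the characteristic-zero result to the generic fibre, and uses that the locus of $U\in\Gr(5,R^8)$ with $\tilde{\gamma}\in\bigwedge^3 U+\bigwedge^2 U\otimes(V_8/U)$ is closed, hence proper over $R$, to specialize the subspace back to the residue field. Without these two steps your proof establishes the proposition only over fields of characteristic $0$.
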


\begin{proof}
(a) Set $W = V_8/U$. If $P$ is the parabolic subgroup that preserves $U$, then $\bigwedge^3 V_8$ has a filtration whose quotients are $\bigwedge^3 U$, $\bigwedge^2 U \otimes W$, $U \otimes \bigwedge^2 W$, and $\bigwedge^3 W$. Since the Lie bracket $\fg_1 \otimes \fg_{-1} \to \fg_0$ is equivariant for $\GL(V_8)$, it follows immediately that if $\gamma' \in \bigwedge^3 W^*$ and $\gamma \in \bigwedge^3 U + \bigwedge^2 \otimes W$, then $[\gamma, \gamma'] = 0$. On the other hand, if we restrict the bracket to various components, we get isomorphisms $\bigwedge^3 W^* \otimes (U \otimes \bigwedge^2 W) \to U \otimes W^*$ and $\bigwedge^3 W^* \otimes \bigwedge^3 W \to \bk$. Both $U \otimes W^*$ and $\bk$ are disjoint subspaces in $\fgl(V_8) = \fg_0$, so we see that if $\gamma$ has a nonzero component in $U \otimes \bigwedge^2 W + \bigwedge^3 W$, then $[\gamma, \gamma'] \ne 0$.

(b) We can rephrase the existence of $V_3$ as saying that $\gamma \in \bigwedge^3 U + \bigwedge^2 U \otimes (V_8/U)$ where $U = (V_8^*/V_3)^*$. Pick $\gamma$ and assume that $V_3$ exists. Pick a basis $e_1, \dots, e_5$ for $U$ and extend it to a basis $\{e_1, \dots, e_8\}$ of $V_8$. Then $\gamma$ is a sum of trivectors $[i,j,k]$ where $|\{i,j,k\} \cap \{6,7,8\}| \le 1$. In particular, given the diagonal $1$-parameter subgroup $\rho(t) = (t^3,t^3,t^3,t^3,t^3, t^{-5}, t^{-5}, t^{-5})$, we have $\lim_{t \to 0} \rho(t) \cdot \gamma = 0$ (since each $[ijk]$ gets scaled either by $t^9$ or $t$), so $\gamma$ is unstable.

In particular, let $\Gr(5,V_8)$ be the Grassmannian of $5$-dimensional subspaces of $V_8$. Let $\cR_5$ be the rank $5$ tautological bundle. The trivial bundle $\bigwedge^3 V_8$ has a rank $40$ subbundle $\bigwedge^3 \cR_5 + \bigwedge^2 \cR_5 \otimes (V_8/\cR_5)$, call this bundle $\cE$. Then $\cE$ is the total space of a variety whose image under the projection $\bigwedge^3 V_8 \times \Gr(5,V_8) \to \bigwedge^3 V_8$ is contained in the unstable locus by the previous paragraph. In characteristic $0$, the image has codimension $1$ by (a) and Proposition~\ref{prop:proj-dual-w37}. In particular, the function $n \mapsto \chi(\Gr(5,V_8); \Sym^n(\cE^*))$ is a polynomial of degree $55$. By flatness, the same is true in any characteristic, so the image has codimension $1$ always.

So the image is an irreducible hypersurface and it is contained in the unstable locus. In characteristic $0$, the unstable locus is an irreducible hypersurface defined by $f_{16}$ because every invariant polynomial is of the form $\lambda f_{16}^k$ for $\lambda \in \bk$. So the two hypersurfaces coincide in this case. In general, let $R$ be a complete DVR with residue field $\bk$ and characteristic $0$ fraction field. Choose a destabilizing $1$-parameter subgroup of $\gamma$ and diagonalize it so that we can lift it to a diagonal subgroup over $R$. In particular, we can choose a lift $\tilde{\gamma}$ of $\gamma$ over $R$ which is destabilized by this subgroup. The locus of $5$-dimensional subspaces $U$ such that $\tilde{\gamma} \in \bigwedge^3 U + \bigwedge^2 U \otimes (V_8/U)$ is a closed subvariety of $\Gr(5, R^8)$. In particular, there exists such a subspace for $\gamma$ over $\bk$. This implies that the two hypersurfaces coincide in any characteristic and proves (c).
\end{proof}

\subsection{Proof of Lemma~\ref{lem:squaremod4}}

Let $\alpha$ denote the form in \eqref{eqn:trace-form}.

\begin{lemma} \label{lem:rank1-lift}
Let $X\in \pgl_n$. Then $X$ can be lifted to a rank $1$ element of $\fgl_n$ if and only if the alternating bilinear form $\alpha(X, - , - )$ has rank $2n-2$.
\end{lemma}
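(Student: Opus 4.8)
The plan is to turn the rank condition into a computation of the dimension of the centralizer of a lift of $X$. Writing $\tilde X$ for a lift and using the identity $\trace(\tilde X\tilde Y\tilde Z)-\trace(\tilde Y\tilde X\tilde Z)=\trace([\tilde X,\tilde Y]\tilde Z)$, I would first record that the alternating bilinear form $\alpha(X,-,-)$ on $\pgl_n$ is given by $(Y,Z)\mapsto\trace([\tilde X,\tilde Y]\tilde Z)$. This is well defined: $[\tilde X,\tilde Y]$ does not depend on the chosen lifts since $I$ is central, and, being a commutator, it is traceless, so the linear functional $\tilde Z\mapsto\trace([\tilde X,\tilde Y]\tilde Z)$ annihilates scalar matrices and descends to $\pgl_n$.

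Next I would identify the radical. Since the trace pairing $\langle A,B\rangle=\trace(AB)$ on $\fgl_n$ is nondegenerate in every characteristic, and since $[\tilde X,\tilde Y]$ is traceless, the condition $\trace([\tilde X,\tilde Y]\tilde Z)=0$ for all $Z\in\pgl_n$ is equivalent to $[\tilde X,\tilde Y]=0$. Hence the radical is the image in $\pgl_n$ of the centralizer $\fc=\{Y\in\fgl_n:[\tilde X,Y]=0\}$; as $\fc$ contains the scalars, this image has dimension $\dim\fc-1$, and therefore
\[
\rank\,\alpha(X,-,-)=\dim\pgl_n-(\dim\fc-1)=n^2-\dim\fc,
\]
a quantity independent of the chosen lift. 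Thus the lemma becomes the assertion that $X$ admits a rank $1$ lift if and only if $\dim\fc=(n-1)^2+1$.

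For the forward direction I would compute $\dim\fc$ for a rank $1$ matrix $A$ directly. If $\trace A\neq0$ then $A$ is semisimple with eigenvalue multiplicities $(n-1,1)$, and its centralizer has dimension $(n-1)^2+1^2$; if $\trace A=0$ then $A$ is a rank $1$ nilpotent, i.e.\ of Jordan type $(2,1^{n-2})$, whose centralizer again has dimension $(n-1)^2+1$. Since $\fc$ is unchanged by adding scalars to $A$, any $X$ with a rank $1$ lift satisfies $\dim\fc=(n-1)^2+1$, i.e.\ $\rank\,\alpha(X,-,-)=2n-2$.

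The converse is where the real work lies, and it is the main obstacle. Decomposing a lift $A$ (over $\ol\bk$) into generalized eigenspaces of multiplicities $m_i$ with nilpotent Jordan partitions $\lambda^{(i)}\vdash m_i$, I would use the standard formula $\dim\fc=\sum_i\sum_j\big((\lambda^{(i)})'_j\big)^2$, the elementary bound $\sum_j(\lambda'_j)^2\le m^2$ for $\lambda\vdash m$ (equality iff $\lambda=1^m$), and the fact that among partitions of $n$ with at least two parts the sum of squares of the parts is maximized uniquely by $(n-1,1)$. Applying these shows that $(n-1)^2+1$ is the largest value of $\dim\fc$ attained by a non-scalar matrix, and that equality forces exactly one of: a single eigenvalue with Jordan type $(2,1^{n-2})$, or two eigenvalues with multiplicities $(n-1,1)$ and trivial nilpotent parts. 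In both cases $A$ minus a suitable scalar has rank $1$. The one remaining point is rationality of the scalar subtracted: in the semisimple case the minimal polynomial is a quadratic whose two roots have unequal multiplicities when $n\ge3$, so it splits over $\bk$ and the rank $1$ lift is defined over $\bk$; the nilpotent case is similarly fine once $\mathrm{char}\,\bk\nmid n$ (in particular for the case $n=3$ away from characteristic $3$ that is needed later), and in general one may pass to $\ol\bk$ since the rank is insensitive to base change.
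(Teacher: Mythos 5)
Your proof is correct and follows essentially the same route as the paper's: use the perfect trace pairing to identify the radical of $\alpha(X,-,-)$ with the (image of the) centralizer of a lift, so that the rank condition becomes the statement that the centralizer has codimension $2n-2$ in $\fgl_n$, which in turn forces an eigenspace of dimension $n-1$. The paper asserts that last equivalence in one line where you supply the Jordan-type computation, but the argument is the same.
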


\begin{proof}
Pick any lift $\tilde{X} \in \fgl_n$ of $X$. Since the trace pairing on $\fgl_n$ is perfect, it follows that $[A,B]=0$ if and only if $\alpha(A,B,C)=0$ for all $C$.  In particular, $\alpha(X,-,-)$ has rank $2n-2$ if and only if the centralizer of $\tilde{X}$ in $\fgl_n$ has codimension $2n-2$, if and only if $\tilde{X}$ has an eigenspace of dimension $n-1$.  Subtracting this eigenvalue gives a rank $1$ lift of $X$.
\end{proof}

\begin{proposition} \label{prop:alpha-stab}
For $n\ge 3$, the projective stabilizer of the form $[\alpha] \in \bP(\bigwedge^3(\pgl_n)^*)$ in $\PGL(\pgl_n)$ is $\Aut(\PGL_n)$.
\end{proposition}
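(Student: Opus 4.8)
Write $\alpha(X_1,X_2,X_3)=\trace([\tilde X_1,\tilde X_2]\tilde X_3)$, so that for fixed $X$ the slice $\alpha(X,-,-)$ is the alternating form $(A,B)\mapsto\trace([\tilde X,\tilde A]\tilde B)$ on $\pgl_n$. The plan is to \emph{reconstruct a subvariety of $\bP(\pgl_n)$ from $[\alpha]$ alone} and then read off its linear automorphisms. The easy inclusion is that $\Aut(\PGL_n)$ stabilizes $[\alpha]$: conjugation by $g\in\PGL_n$ preserves both the bracket and the trace form, hence fixes $\alpha$ on the nose, while the outer involution $X\mapsto -X^{T}$ (well defined on $\pgl_n$ since transpose fixes scalars) is a Lie algebra automorphism preserving the trace form, so a direct check gives $\alpha(-X_1^T,-X_2^T,-X_3^T)=\alpha(X_1,X_2,X_3)$. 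For $n\ge 3$ these two generate the image in $\PGL(\pgl_n)$ of $\Aut(\PGL_n)=\PGL_n\rtimes\bZ/2$, and the adjoint map $\Aut(\PGL_n)\to\PGL(\pgl_n)$ is injective. It remains to prove the reverse inclusion for $G:=\operatorname{Stab}_{\PGL(\pgl_n)}([\alpha])$.

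First I would sort the slices $\alpha(X,-,-)$ by rank. Since the trace form on $\fgl_n$ is perfect in every characteristic, the radical of $\alpha(X,-,-)$ is $\fz_{\fgl_n}(\tilde X)/\bk I$, so $\rank\alpha(X,-,-)=n^2-\dim\fz_{\fgl_n}(\tilde X)$. Among \emph{non-scalar} matrices the centralizer dimension is maximized, at $(n-1)^2+1$, exactly by those of the form $\lambda I+(\text{rank }1)$ — one merges a single pair of parts away from the scalar, whether one works with a two-eigenvalue split $(n-1,1)$ or with a rank-$1$ nilpotent; every other partition type gives a strictly larger rank. Hence $2n-2$ is the minimal nonzero value of $\rank\alpha(X,-,-)$, attained precisely on the rank-$1$ lifts, which is consistent with Lemma~\ref{lem:rank1-lift}. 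I then define $Y\subset\bP(\pgl_n)$ to be the locus where $\alpha(X,-,-)$ has rank $\le 2n-2$. This is closed (cut out by $2n$-Pfaffians of $\alpha$) and, by the above, equals the set of classes of rank-$1$ lifts.

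Next I would identify $Y$ geometrically. The rank-$1$ matrices are the Segre variety $\Sigma=\bP(V)\times\bP(V^{*})\subset\bP(\fgl_n)$, which spans $\bP(\fgl_n)$ and whose secant variety is the rank-$\le 2$ locus. Because $n\ge3$, the point $[I]$ (of rank $n$) lies off this secant variety, so linear projection from $[I]$ is a closed immersion on $\Sigma$ with image exactly $Y$; in particular $Y\cong\bP(V)\times\bP(V^{*})$ is irreducible and spans $\bP(\pgl_n)$. Since $G$ merely rescales $\alpha$ and acts invertibly, it permutes the rank strata of the family $\alpha(X,-,-)$ and therefore preserves $Y$. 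As $Y$ is irreducible and nondegenerate, any element of $G$ fixing $Y$ pointwise is the identity, so restriction gives an injection $G\hookrightarrow\Aut(Y)=(\PGL_n\times\PGL_n)\rtimes\bZ/2$ whose image lands among the automorphisms of $Y$ that extend to linear automorphisms of the ambient $\bP(\pgl_n)$. Such an automorphism lifts to a linear automorphism of $\bP(\fgl_n)$ preserving $\Sigma$, and it descends through the projection precisely when it fixes the center $[I]$; writing the factors as $M\mapsto AMB^{-1}$, the condition $AB^{-1}\in\bk I$ forces the two $\PGL_n$-factors to agree, leaving the diagonal $\PGL_n$ (conjugation) together with transpose, i.e.\ exactly $\Aut(\PGL_n)$. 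Thus $G\subseteq\Aut(\PGL_n)$, and with the easy inclusion this yields equality.

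The main obstacle is the final step: controlling the linear automorphisms of the \emph{projected} Segre $Y$ rather than of $\Sigma$ itself, and in particular checking that the only abstract automorphisms of $Y$ extending linearly to $\bP(\pgl_n)$ are those fixing the projection center $[I]$ — the bookkeeping of the descent through the projection from $[I]$ is where errors would hide. A secondary concern is that the whole argument must run in arbitrary characteristic, which is why I route everything through perfectness of the trace form on $\fgl_n$ and through the characteristic-free combinatorics of centralizer dimensions, never invoking $\pgl_n\cong\fsl_n$. If one wants the statement at the level of group schemes rather than abstract groups, one would additionally verify that the tangent space $\{D\in\fgl(\pgl_n):D\cdot\alpha\in\bk\alpha\}$ to the stabilizer has dimension $\dim\PGL_n$, so that $G$ is reduced of the expected dimension.
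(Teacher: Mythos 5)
Your proposal is correct and follows essentially the same route as the paper: reduce via Lemma~\ref{lem:rank1-lift} to the stabilizer of the rank-$1$ locus in $\bP(\pgl_n)$, identify that locus with $\bP(V)\times\bP(V^*)$ projected from $[I]$, and recognize the relevant subgroup of its automorphism group (those fixing the projection center, equivalently preserving the incidence divisor) as $\Aut(\PGL_n)$. The only substantive difference is that you spell out the centralizer-dimension count showing $2n-2$ is the minimal nonzero rank of the slices (which the paper leaves implicit), while the paper handles the ``descent bookkeeping'' you flag by noting that $\cO(1,1)$ is the unique $n$th root of the anticanonical bundle, so the restricted automorphism canonically lifts to $\bP(\fgl_n)$.
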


\begin{proof}
The stabilizer certainly contains $\Aut(\PGL_n)$, so we need simply show that there are no more automorphisms.  By Lemma~\ref{lem:rank1-lift}, it suffices to show this for the automorphism group of the rank $1$ locus in $\bP(\pgl_n)$.  Since $n\ge 3$, an element of $\pgl_n$ with a rank $1$ lift has a unique such lift, and thus the rank $1$ locus in $\bP(\fgl_n)$ embeds in $\bP(\pgl_n)$.  Moreover, since the rank $1$ locus is isomorphic to $\bP(V_n)\times \bP(V_n^*)$, the pullback of $\cO_{\bP(\fgl_n)}(1)$ is the unique $n$th root of the anti-canonical bundle.  It follows that any linear automorphism of the locus in $\bP(\pgl_n)$ lifts to a linear automorphism in $\bP(\fgl_n)$ preserving the identity. In other words, the stabilizer of the rank $1$ locus in $\bP(\pgl_n)$ is contained in the subgroup of the automorphism group of $\bP(V_n)\times \bP(V_n^*)$ preserving the natural incidence relation. As this is precisely $\Aut(\PGL_n)$, the claim follows.
\end{proof}

Note that $\pgl_n^*$ is canonically identified with $\fsl_n$: In characteristic $0$, we may use the Killing form to identify $\pgl_n$ with its dual (up to an overall scalar).  This fails in characteristic dividing $n$, however, in which case there is an $\Aut(\pgl_n)$-invariant linear functional on $\pgl_n$, but no $\Aut(\pgl_n)$-invariant element. Indeed, $\fgl_n$ has both an invariant functional $\trace$ and an invariant element $1$, and since $\trace(1)=0$ in characteristic dividing $n$, the trace functional survives.  We see that in fact the dual of $\pgl_n=\fgl_n/1$ is canonically identified with the annihilator of $1$ under the trace pairing, i.e., the Lie algebra $\fsl_n$ of traceless matrices. 

\begin{corollary} \label{cor:sl-dense}
The orbit $\GL(\fsl_3)\alpha$ is dense in $\bigwedge^3(\fsl_3)$.
\end{corollary}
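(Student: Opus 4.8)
The plan is to deduce density from a dimension count, feeding off the stabilizer computation in Proposition~\ref{prop:alpha-stab}. Via the canonical identification $\fsl_3 = \pgl_3^*$ discussed above, the form $\alpha$ lives in $\bigwedge^3(\pgl_3)^* = \bigwedge^3 \fsl_3$, and the natural action of $\GL(\fsl_3)$ on this space is exactly the one in the statement. Since $\dim \fsl_3 = 8$, the target $\bigwedge^3 \fsl_3$ is an irreducible affine space of dimension $\binom{8}{3} = 56$, while $\GL(\fsl_3) = \GL_8$ has dimension $64$. Because the ambient space is irreducible, the orbit $\GL(\fsl_3)\alpha$ (a locally closed, hence constructible, subset) is dense if and only if it has dimension $56$, i.e.\ if and only if the stabilizer of $\alpha$ in $\GL(\fsl_3)$ has dimension $8$. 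Thus the whole problem reduces to computing this one stabilizer dimension.

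First I would pass to $\PGL$. The contragredient isomorphism $\GL(\fsl_3) \cong \GL(\pgl_3)$, $g \mapsto (g^{-1})^*$, descends to an isomorphism $\PGL(\fsl_3) \cong \PGL(\pgl_3)$ intertwining the action on $\bigwedge^3 \fsl_3$ with the action on $\bigwedge^3(\pgl_3)^*$. Under it, Proposition~\ref{prop:alpha-stab} identifies the projective stabilizer $\bar S := \mathrm{Stab}_{\PGL(\fsl_3)}([\alpha])$ with $\Aut(\PGL_3)$, whose identity component is the inner automorphism group $\mathrm{Inn}(\PGL_3) \cong \PGL_3$ of dimension $8$; hence $\dim \bar S = 8$. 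Pulling back along $\GL(\fsl_3) \to \PGL(\fsl_3)$, the projective stabilizer in $\GL$, namely $\hat S := \{g : g\alpha \in \bk^\times \alpha\}$, satisfies $\dim \hat S = \dim \bar S + 1 = 9$.

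The last—and really the only delicate—step is to pass from $\hat S$ back to the honest stabilizer $S := \{g : g\alpha = \alpha\}$, which is the kernel of the character $\chi \colon \hat S \to \bk^\times$ defined by $g\alpha = \chi(g)\alpha$. I claim $\chi$ is nonconstant, so that its image is $1$-dimensional and $\dim S = \dim \hat S - 1 = 8$, yielding the orbit dimension $64 - 8 = 56$. The point is that the central torus $\bk^\times \subset \GL(\fsl_3)$ lies in $\hat S$ and acts on $\bigwedge^3 \fsl_3$ by $t \mapsto t^3$; thus $\chi(t\cdot \mathrm{id}) = t^3$ is a nonconstant morphism with finite kernel in every characteristic (it is the Frobenius when $\mathrm{char}\,\bk = 3$), so $\chi$ has $1$-dimensional image. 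I expect this character bookkeeping to be the main thing to get right, precisely because it is the only place where one must verify that nothing degenerates in positive or small characteristic; everything else is a clean dimension count. With $\dim S = 8$ in hand, the orbit has full dimension $56$, so its closure is all of the irreducible variety $\bigwedge^3 \fsl_3$, and the orbit is therefore dense.
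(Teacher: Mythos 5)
Your proof is correct and follows essentially the same route as the paper's one-line argument: a dimension count $64-8=56$ using the stabilizer computation of Proposition~\ref{prop:alpha-stab}. The only difference is that you spell out the bookkeeping between the projective stabilizer in $\PGL(\fsl_3)$ and the honest stabilizer in $\GL(\fsl_3)$ (the $+1$ for stabilizing the line cancelling the $-1$ from the nonconstant character $t\mapsto t^3$, valid in every characteristic), which the paper silently elides.
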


\begin{proof}
By Proposition~\ref{prop:alpha-stab}, the dimension of the orbit is $\dim(\GL(\fsl_3))-\dim(\Aut(\PGL_3)) = 64 - 8 = 56 = \dim(\bigwedge^3(\fsl_3))$.
\end{proof}

For any commutative ring $R$, any element of $R[\bigwedge^3(R^8)]^{\SL_8}$ can be evaluated on any triple $(V_8,\omega,\alpha')$ where $V_8\cong R^8$, $\omega \colon R\cong \bigwedge^8(V_8)$, and $\alpha'\in \bigwedge^3(V_8)$. Choose an isomorphism $\omega_{\fsl_3} \colon \bZ \cong \bigwedge^8(\fsl_3(\bZ))$, so that $(\fsl_3,\omega_{\fsl_3},\alpha_{\fsl_3})$ is a triple of the above form. Note that $A\mapsto -A^T$ has determinant $-1$ on $\fsl_3$, and thus either choice of $\omega_{\fsl_3}$ gives an isomorphic triple.

Now, let $\iota$ be a degree $16$ element of $\bQ[\bigwedge^3(\bQ^8)]^{\SL_8}$.  By clearing denominators and eliminating content, we may ensure that $\iota$ is a primitive element of $\bZ[\bigwedge^3(\bZ^8)]^{\SL_8}$, and this determines $\iota$ up to a sign. Since $(\fsl_3,\omega_{\fsl_3},\alpha_{\fsl_3})$ projectively generates a dense orbit in all characteristics (Corollary~\ref{cor:sl-dense}), $\iota(\fsl_3,\omega_{\fsl_3},\alpha_{\fsl_3})$ is nonzero modulo all primes, and is thus a unit.  We may thus choose the representative of $\iota$ such that $\iota(\fsl_3,\omega_{\fsl_3},\alpha_{\fsl_3})=1$.

\begin{lemma}
A triple $(V_8,\omega,\alpha)$ over an algebraically closed field is equivalent to $(\fsl_3,\omega_{\fsl_3},\alpha_{\fsl_3})$ if and only if $\iota(V_8,\omega,\alpha)=1$.
\end{lemma}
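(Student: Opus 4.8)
The plan is to unwind the notion of equivalence and reduce the statement to a single orbit equality. Fix once and for all a linear isomorphism $\fsl_3\cong\bk^8$ carrying $\omega_{\fsl_3}$ to the standard volume form $\omega_0$ and $\alpha_{\fsl_3}$ to a fixed $\alpha_0\in\bigwedge^3\bk^8$; by the remark preceding the lemma that either choice of $\omega_{\fsl_3}$ gives an isomorphic triple, the class of $\alpha_0$ is unambiguous. After choosing an isomorphism $\phi\colon V_8\to\bk^8$ with $\phi^*\omega_0=\omega$ (which is determined up to $\SL_8$, since two such differ by an element preserving $\omega_0$, i.e. of determinant $1$), the triple $(V_8,\omega,\alpha)$ is equivalent to $(\fsl_3,\omega_{\fsl_3},\alpha_{\fsl_3})$ precisely when $\phi_*\alpha\in\SL_8\cdot\alpha_0$, and $\iota(V_8,\omega,\alpha)=\iota(\phi_*\alpha)$ by the definition of $\iota$ on triples. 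Thus everything reduces to the set equality $\SL_8\cdot\alpha_0=\{\beta\in\bigwedge^3\bk^8 : \iota(\beta)=1\}$, where by our normalization $\iota(\alpha_0)=1$.

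The forward inclusion is immediate: $\iota$ is $\SL_8$-invariant, hence constant, equal to $\iota(\alpha_0)=1$, on $\SL_8\cdot\alpha_0$. For the reverse inclusion I would first record how $\iota$ transforms under all of $\GL_8$. Since $\iota$ has degree $16$ and a scalar $tI$ acts on $\bigwedge^3\bk^8$ by $t^3$, comparing $\iota(t^3\beta)=t^{48}\iota(\beta)$ with $\det(tI)^m=t^{8m}$ forces the relative-invariant character to be $\det^6$, i.e. $\iota(g\beta)=\det(g)^6\iota(\beta)$ for all $g\in\GL_8$.

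Now suppose $\iota(\beta)=1$; then $\beta$ lies in the nonvanishing locus $\{\iota\ne0\}$, the complement of the unstable hypersurface $\{\iota=0\}$ of Proposition~\ref{prop:w38-kernel}(c). The crux is that this locus is a single $\GL_8$-orbit, necessarily the dense orbit $U=\GL_8\cdot\alpha_0$ of Corollary~\ref{cor:sl-dense}: over $\bk=\ol{\bk}$ of characteristic $0$ this is exactly the Sato--Kimura transitivity recalled at the start of \S\ref{sec:w38}. Granting this, write $\beta=g\alpha_0$ with $g\in\GL_8$; then $1=\iota(\beta)=\det(g)^6\iota(\alpha_0)=\det(g)^6$, so $\det(g)\in\mu_6$. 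It remains to correct the determinant of $g$ into $\SL_8$ without moving $\beta$. For this I would compute $\det$ on the stabilizer $S=\{s\in\GL_8 : s\alpha_0=\alpha_0\}$. Applying $\iota(s\alpha_0)=\det(s)^6\iota(\alpha_0)$ with $s\alpha_0=\alpha_0$ gives $\det(S)\subseteq\mu_6$; conversely $S$ contains the scalars $tI$ with $t^3=1$ (these fix $\alpha_0$ and have $\det=t^8=t^2$ ranging over $\mu_3$) and the involution $\theta\colon A\mapsto -A^T$ on $\fsl_3$, which preserves $\alpha_{\fsl_3}$ and has $\det=-1$ by the remark preceding the lemma; hence $\det(S)=\mu_6$. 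Choosing $s\in S$ with $\det(s)=\det(g)^{-1}$, the element $g'=gs$ lies in $\SL_8$ and satisfies $g'\alpha_0=g\alpha_0=\beta$, so $\beta\in\SL_8\cdot\alpha_0$, as required.

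The one genuinely nontrivial step—and the place I expect the real work—is the transitivity of $\GL_8$ on $\{\iota\ne0\}$ in positive characteristic. Corollary~\ref{cor:sl-dense} already gives that $U=\GL_8\cdot\alpha_0$ is dense, hence open, in every characteristic, so the obstruction is precisely that $\{\iota\ne0\}\setminus U$, a $\GL_8$-stable locally closed set consisting of orbits of dimension $<56$, must be shown to be empty when $\mathrm{char}\,\bk=p$. I would establish this by reduction to characteristic $0$: lift $\beta$ to a complete DVR $R$ with residue field $\bk$ and characteristic-$0$ fraction field, apply Sato--Kimura over the generic fiber, and transport orbit membership back to the special fiber, in the spirit of the complete-DVR argument in the proof of Proposition~\ref{prop:w38-kernel}(c). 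Everything else is bookkeeping with the character $\det^6$ and the two explicit families of stabilizer elements.
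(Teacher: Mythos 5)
Your proof has the same two pillars as the paper's: (i) over an algebraically closed field of any characteristic, $\GL_8$ acts transitively on $\{\iota\ne 0\}$, and (ii) a determinant/root-of-unity computation built from the scalars and the outer involution $A\mapsto -A^T$ of $\fsl_3$. For (ii) your packaging differs from the paper's but is equivalent: the paper normalizes a triple with nonzero invariant to $(\fsl_3,\omega_{\fsl_3},c\alpha_{\fsl_3})$, notes that its invariant is $c^{16}$, and exhibits the single determinant-one element $A\mapsto -\zeta_{16}A^{T}$ (which multiplies $\alpha_{\fsl_3}$ by $\zeta_{16}^{3}$, enough since $3$ is prime to $16$); you instead record the relative-invariant character $\det^{6}$ and compute $\det(\operatorname{Stab}(\alpha_0))$ using exactly the same two families of stabilizing elements. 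Both are correct; just read ``$\mu_6$'' as its group of field points in characteristics $2$ and $3$, which is all that the condition $\det(g)^6=1$ produces there anyway.

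The genuine problem is your treatment of (i), which you correctly identify as the crux. The mechanism you propose---lift $\beta$ to a complete DVR $R$, apply Sato--Kimura to the generic fibre, and ``transport orbit membership back to the special fibre''---fails as an inference: membership in a non-closed orbit does not specialize. As a sanity check, $\tilde{\beta}=\pi\alpha_0$ ($\pi$ a uniformizer) is a lift of $0$ whose generic fibre lies in the dense orbit, so if that transport were valid it would place $0$ in the dense orbit over the residue field. Concretely, the scheme $\{g\in \GL_{8,R}: g\alpha_0=\tilde{\beta}\}$ is affine, and no properness is available to force its special fibre to be nonempty; restricting attention to $\beta$ with $\iota(\beta)\ne 0$ does not repair the inference. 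The paper's route is different in kind: it deduces (i) from density of the orbit (Corollary~\ref{cor:sl-dense}), i.e., from the identification of the closed $\GL_8$-stable complement of the dense orbit with $\{\iota=0\}$ in every characteristic; note that the DVR argument in the proof of Proposition~\ref{prop:w38-kernel}(c) is applied there to the closed, specializable condition ``there exists a destabilizing $V_3$,'' not to orbit membership. You should either quote that identification or prove directly that the orbit complement equals $\{\iota=0\}$ set-theoretically; with that in hand the rest of your argument goes through.
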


\begin{proof}
Since triples equivalent to $(\fsl_3,\omega_{\fsl_3},\alpha_{\fsl_3})$ are dense in $\bigwedge^3(\bZ^8) \git \bG_m$, every triple with nonzero invariant is equivalent to one of the form $(\fsl_3,\omega_{\fsl_3},c\alpha_{\fsl_3})$ for some $c\ne 0$. It suffices to show that two such triples are equivalent if and only if they have the same invariant. Since the invariant is $c^{16}$, there is nothing to show in characteristic $2$, while in odd characteristic we need simply show that the triples with parameter $c$ and $\zeta_{16}c$ are equivalent.  Such an equivalence is given by the determinant $1$ endomorphism $A\mapsto -\zeta_{16}A^T$ of $\fsl_3$.
\end{proof}

\begin{proposition}
$\iota$ is a nonzero square modulo $4$.
\end{proposition}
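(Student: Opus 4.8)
The plan is to exploit the density of the orbit of the distinguished triple together with the transformation law for $\iota$, and to observe that the natural determinantal square root of $\iota$, which is genuinely two-valued over $\bZ$, becomes single-valued modulo $2$.

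Set $V_8 = \fsl_3$ and consider the orbit map $\phi\colon \GL(V_8) \to \bigwedge^3 V_8$, $g \mapsto g\cdot \alpha$, where $\alpha = \alpha_{\fsl_3}$. Since $\iota$ is $\SL(V_8)$-invariant of degree $16$ and scales by $t^{48} = \det(tI)^6$ under the center, it is a relative invariant with character $\det^6$; as $\iota(\alpha) = 1$ by our normalization, we obtain $\phi^*\iota = (\det g)^6 = D^2$ with $D := (\det g)^3 \in \cO(\GL(V_8))$. By Corollary~\ref{cor:sl-dense} the orbit is dense in every characteristic, so $\phi$ is dominant and $\phi^*$ is injective on coordinate rings over $\bF_2$ (and over $\bQ$). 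This injectivity is the engine: an integral congruence that holds after pullback along $\phi$ can be detected in $\bF_2[g_{ij}]$ and transported back to $\bF_2[\bigwedge^3 V_8]$.

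First I would show $\bar\iota := \iota \bmod 2$ is a square. The key is that $D \bmod 2$ descends along $\phi$: if $s$ stabilizes $\alpha$ then $\det(s)^6 = 1$, and in characteristic $2$ this forces $\det(s)^3 = 1$, so $D(gs) = (\det s)^3 D(g) = D(g)$. Hence $D \bmod 2$ is constant on the fibers of $\phi$ and descends to a regular function on the dense orbit $U$, that is, to an element of $\bF_2[\bigwedge^3 V_8]_{\bar\iota}$ whose square is $\bar\iota$; since $\bF_2[\bigwedge^3 V_8]$ is integrally closed, this square root is an honest polynomial $\bar f$ with $\bar f^2 = \bar\iota$ and $\phi^*\bar f = D \bmod 2$. (Here I use that $U$ is the complement of the irreducible hypersurface of Proposition~\ref{prop:w38-kernel}.) Now lift $\bar f$ to $f \in \bZ[\bigwedge^3 V_8]$, so that $\phi^* f \equiv D \pmod 2$, say $D - \phi^* f = 2T$ with $T \in \cO(\GL(V_8))$. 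Then
\[
\phi^*(\iota - f^2) = D^2 - (\phi^* f)^2 = (D-\phi^* f)(D+\phi^* f) = 2T\,(2T + 2\phi^* f) = 4\,T\,(T+\phi^* f).
\]
Writing $\iota - f^2 = 2h_1$ (legitimate by the preceding paragraph), this gives $\phi^* h_1 = 2T(T+\phi^* f) \equiv 0 \pmod 2$, whence $h_1 \equiv 0 \pmod 2$ by injectivity of $\phi^*$ over $\bF_2$, i.e.\ $\iota \equiv f^2 \pmod 4$. Finally $\bar f \neq 0$ since $\bar f^2 = \bar\iota \neq 0$ ($\iota$ being primitive), so the square is nonzero; and $\iota$ is not a perfect square because $\{\iota = 0\}$ is reduced and irreducible in characteristic $0$ (Proposition~\ref{prop:w38-kernel}), which is exactly what makes $g = (\iota - f^2)/4$ nonzero in \eqref{eqn:double}.

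The main obstacle is the descent in the third paragraph: producing the modular square root $\bar f$ as a genuine polynomial rather than merely a two-valued function. Everything hinges on the collapse $\det(s)^6 = 1 \Rightarrow \det(s)^3 = 1$ in characteristic $2$, which removes the sign ambiguity in $(\det g)^3$ that obstructs extracting a global square root of $\iota$ over $\bZ$—and which is precisely the reason $\iota$ is only a square modulo $4$ and not an honest square. The two technical points to verify are that $\phi^*$ is injective modulo $2$ (this follows from density in all characteristics, Corollary~\ref{cor:sl-dense}) and that the descended function extends across the unstable hypersurface, for which integral closedness of $\bF_2[\bigwedge^3 V_8]$ suffices; the rest is the elementary completion-of-the-square computation displayed above.
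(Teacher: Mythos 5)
Your overall strategy is sound and genuinely different from the paper's (which realizes the double cover of $\{\iota\ne 0\}$ inside the relative Picard scheme of the rank-$4$ loci, so that it is \'etale by construction, and then pins down the sign of $f^2-4g$ with a real-points argument), and your bootstrap from ``square mod $2$'' to ``square mod $4$'' via $\phi^*(\iota-f^2)=(D-\phi^*f)(D+\phi^*f)\in 4\,\cO(\GL(V_8))$ together with injectivity of $\phi^*$ mod $2$ is correct. However, there is a genuine gap at exactly the step you identify as the crux: the descent of $D=(\det g)^3$ along $\phi$ in characteristic $2$. You verify $(\det s)^3=1$ for geometric points $s$ of the stabilizer $S$ of $\alpha$, but ``constant on geometric fibers'' does not imply ``descends to a regular function'' in characteristic $2$ --- the relative Frobenius $\bA^1\to\bA^1$, $t\mapsto t^2$, is bijective on points and yet the source coordinate does not descend. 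Descent along $\GL(V_8)\to \GL(V_8)/S\cong U$ requires the character $\det^3\colon S\to \bG_m$ to be trivial as a morphism of group schemes. From $\iota(s\alpha)=\iota(\alpha)$ you only get $(\det(s)^3-1)^2=\det(s)^6-1=0$, i.e., $\det^3$ lands in $\bmu_2$, which in characteristic $2$ is a nontrivial infinitesimal group scheme; to kill it you would need $\trace$ to vanish on $\mathrm{Lie}(S)$, e.g., that $S$ is smooth in characteristic $2$ --- something you neither state nor prove, and which Proposition~\ref{prop:alpha-stab} does not supply, since it only identifies the reduced stabilizer.

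The gap is repairable without settling that smoothness question. Differentiating $\iota(gx)=\det(g)^6\iota(x)$ gives $d\iota_x(Xx)=6\trace(X)\,\iota(x)$ for $X\in\fgl(V_8)$; in characteristic $2$ the right-hand side vanishes, and for $x$ in the dense orbit the vectors $Xx$ span the whole tangent space, so every partial derivative of $\bar\iota$ vanishes on a dense open set and hence identically. Over the perfect field $\bF_2$ this forces $\bar\iota=\bar f^{\,2}$ for some polynomial $\bar f$. The identity $\phi^*\bar f=\bar D$ that your mod-$4$ computation needs then comes for free, since $\bF_2[\GL(V_8)]$ is a domain and both elements square to $\phi^*\bar\iota$. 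With that substitution your argument goes through (and, unlike the paper's, never has to rule out the sign $-\iota$, since the square root is produced directly in characteristic $2$); but as written, the justification of the key descent step does not hold up.
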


\begin{proof}
Let $Y=\Spec(\bZ[\bigwedge^3(\bZ^8)][1/\iota])$. For any geometric point of $Y$, the rank $4$ locus $Z$ of the corresponding trilinear form is isomorphic to $\bP^2\times \bP^2$.  There are two isomorphism classes of line bundles on $\bP^2\times \bP^2$ with Hilbert polynomial $(k+1)(k+2)^2(k+3)/4$ relative to the ample bundle $\cO(1,1)$, and thus the corresponding subscheme of $\Pic(Z/Y)$ is an \'etale double cover $X$ of $Y$.  

Since $Y$ is affine with $\Pic(Y)=1$, $X$ is affine and has an equation of the form $y^2+fy+g=0$, where since $X$ is \'etale, $f^2-4g$ must be a unit.  The unit group is generated by $\pm 1$ and $\iota$, so $f^2-4g=\pm \iota^\ell$ for some integer $\ell$.  By clearing denominators, we may ensure that $f$ and $g$ are polynomials, making $\ell\ge 0$.  If $\ell>1$, then $f$ is congruent mod $2$ to a multiple of $\iota$, and thus we may add a suitable polynomial to $y$ to make $f$ a multiple of $\iota$, and thus $g$ a multiple of $\iota^2$, and we may eliminate the common factor.

We thus have an expression for $X$ as $y^2+fy+g=0$ where $f^2-4g$ is one of $\{1,-1, \iota,-\iota\}$. Now, consider points of $Y(\bR)$ equivalent to $\alpha_{\fsl_3}$ and $\alpha_{\su_3}$ (with an arbitrary choice of volume form in the latter case).  The first point has $\iota=1$ and has a pair of real preimages in $X$, and thus $f^2-4g$ cannot be $-1$ or $-\iota$.  In contrast, the rank $4$ locus of $\alpha_{\su_3}$ is the restriction of scalars of $\bP^2_\bC$, and thus that point has no real preimage, ruling out $f^2-4g=1$.

In particular, $\iota=f^2-4g$, which finishes the proof.
\end{proof}

\begin{remark}
By taking the integral closure of $\bZ[\bigwedge^3(V_8)]$ in $\bZ[X]$, we obtain a natural extension of this double cover to all of $\bigwedge^3(V_8)$, which again over $\bZ[1/2]$ is given by $y^2=\iota$, and over any field has ramification locus $\iota=0$.

In particular, $\iota$ defines an irreducible hypersurface in all characteristics and is reduced in characteristics different from $2$.
\end{remark}

\section{Generalities on vector bundles on a genus $2$ curve} \label{sec:genus2-gen}

Some general background for this section can be found in \cite{popa}.

Let $C$ be a smooth curve of genus $2$ over a field $\bk$ and assume that there is a $\bk$-rational Weierstrass point $P$. Let $\rJ^1(C)$ denote the space of line bundles of degree $1$ over $C$, which has a natural divisor $\Theta$ given by $\{ \cL \mid \rH^0(C; \cL) \ne 0\}$. Let $\rJ(C) = \rJ^0(C)$ be the Jacobian of $C$, i.e., the space of line bundles of degree $0$ over $C$.

Let $\SU_3(C)$ be the coarse moduli space of rank $3$ semistable vector bundles on $C$ modulo S-equivalence \cite[\S 2.5]{popa}. Let $\SU^{\rm s}_3(C)$ be the subscheme of stable vector bundles in $\SU_3(C)$. Let $\Omega^1_{\SU_3(C)}$ denote the cotangent sheaf of $\SU_3(C)$. On $\SU^{\rm s}_3(C)$, the fiber of $\Omega^1_{\SU_3(C)}$ over $\cE$ is $\rH^0(C; \pgl(\cE) \otimes \omega_C)$ \cite[Theorem 4.5.4]{HL}.

Given a vector bundle $\cE$ over $C$, define 
\[
\Theta_\cE = \{\cL \in \rJ^1(C) \mid \rH^0(C; \cE \otimes \cL) \ne 0\}.
\]

\begin{lemma} \label{lem:theta-map}
If $\cE$ is a semistable rank $3$ vector bundle with trivial determinant, then $\Theta_\cE$ is a divisor in $\rJ^1(C)$ which is linearly equivalent to $3\Theta$.
\end{lemma}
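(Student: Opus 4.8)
The plan is to realize $\Theta_\cE$ as the zero locus of a canonical ``theta section'' of a determinant-of-cohomology line bundle on $\rJ^1(C)$, to identify that line bundle with $\cO_{\rJ^1(C)}(3\Theta)$ by a $K$-theory argument, and finally to check that the section is not identically zero. This last point is where semistability is essential and is the main obstacle.

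First I would apply Riemann--Roch on the genus $2$ curve $C$: for every $\cL \in \rJ^1(C)$ the twist $\cE \otimes \cL$ has rank $3$ and degree $3$, so
\[
\chi(C; \cE \otimes \cL) = 3 + 3(1 - 2) = 0.
\]
Let $\cP$ be a Poincar\'e line bundle on $C \times \rJ^1(C)$, let $p, q$ be the two projections, and consider $Rp_*(q^*\cE \otimes \cP)$. Since $\chi$ vanishes on every fiber, I can represent it by a two-term complex $[K_0 \xrightarrow{d} K_1]$ of vector bundles of equal rank; then $\det(d)$ is a section $\theta_\cE$ of $\cN := \det Rp_*(q^*\cE \otimes \cP)^{-1} = \det K_1 \otimes (\det K_0)^{-1}$ whose zero scheme is exactly $\{\cL : \rH^0(C; \cE \otimes \cL) \ne 0\}$, i.e.\ $\Theta_\cE$ whenever this locus is proper.

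Next I claim $\cN \cong \cO_{\rJ^1(C)}(3\Theta)$. The isomorphism class of a determinant-of-cohomology line bundle depends only on the class of $q^*\cE \otimes \cP$ in $K_0(C \times \rJ^1(C))$, hence only on $[\cE] \in K_0(C)$. On a smooth curve $K_0(C) \cong \bZ \oplus \Pic(C)$ via rank and determinant, so $[\cE] = [\cO_C^{\oplus 3}]$ because $\cE$ has rank $3$ and trivial determinant. Therefore $\cN \cong \det Rp_*(\cP)^{-3}$, and since $\det Rp_*(\cP)^{-1} \cong \cO_{\rJ^1(C)}(\Theta)$ is the classical description of the Riemann theta divisor (the rank $1$, degree $g-1$ case of the same construction), I obtain the asserted linear equivalence $\cN \cong \cO_{\rJ^1(C)}(3\Theta)$.

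Finally I must show $\theta_\cE \not\equiv 0$, equivalently that some $\cL$ has $\rH^0(C; \cE \otimes \cL) = 0$; this is where semistability enters, as a destabilized $\cE$ can have $\Theta_\cE = \rJ^1(C)$. Since a semistable bundle degenerates flatly to the direct sum $\gr(\cE) = \bigoplus_i \cF_i$ of the stable Jordan--H\"older subquotients, and $\rH^0$ is upper semicontinuous, it suffices (after base change to $\bar{\bk}$) to find $\cL$ with $\rH^0(C; \cF_i \otimes \cL) = 0$ for all $i$, where each $\cF_i$ is stable of slope $0$ and rank at most $3$. A nonzero section of $\cF_i \otimes \cL$ yields an injection $\cL^{-1} \hookrightarrow \cF_i$ whose saturation has degree $\ge -1$, while strict stability forces degree $< 0$; hence $\Theta_{\cF_i} = \rJ^1(C)$ would make every degree $(-1)$ line bundle a saturated sub-line-bundle of $\cF_i$, i.e.\ would force the Quot scheme of degree $(-1)$ sub-line-bundles of $\cF_i$ to dominate the $2$-dimensional $\rJ^{-1}(C)$. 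The rank $1$ and rank $2$ cases are classical (for rank $2$, the Narasimhan--Ramanan description of $\SU_C(2)$), and the rank $3$ case I would rule out by the dimension count $\chi(\cHom(\cL^{-1}, \cQ)) = 3 + 2(1-2) = 1 < 2$ for the quotient $\cQ$ of rank $2$ and degree $1$. Making this last bound unconditional --- controlling the excess dimension of the Quot scheme uniformly over all stable $\cF_i$ --- is the step I expect to require the most care.
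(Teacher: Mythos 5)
Your identification of the divisor class is correct and in fact cleaner than the paper's: the paper instead packages the divisors $\Theta_\cE$ into a family over $\SU_3(C)$, uses $\Pic(\SU_3(C))\cong\bZ$ to show the induced map $\SU_3(C)\to\Pic^0(\rJ^1(C))$ is constant, and then evaluates at a decomposable bundle $\cL_1\oplus\cL_2\oplus\cL_3$ via the theorem of the square. Your determinant-of-cohomology computation through $K_0(C)\cong\bZ\oplus\Pic(C)$ reaches the same conclusion bundle by bundle, without invoking the Picard group of the moduli space at all, and that part of the argument is sound.

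The genuine gap is exactly where you suspected: the nonvanishing of the theta section. Your count $\chi(\cHom(\cL^{-1},\cQ))=1$ is the \emph{expected} dimension of the Quot scheme, hence a \emph{lower} bound for its local dimension, whereas you need an \emph{upper} bound to prevent it from dominating the two-dimensional $\rJ^{-1}(C)$. The honest upper bound at a point is $\dim\hom(\cL^{-1},\cQ)=\rh^0(\cL\otimes\cQ)$, and $\cL\otimes\cQ$ has rank $2$ and degree $3$ on a genus $2$ curve, so $\rh^0$ can a priori exceed $\chi=1$; nothing in your setup controls this excess uniformly over all stable $\cF_i$. (The rank $2$ case you delegate to Narasimhan--Ramanan is the same nontrivial assertion one rank down, so it does not reduce the difficulty.) The statement you actually need --- that a semistable bundle of slope $0$ admits a twist by a degree $g-1$ line bundle with no sections --- is precisely \cite[Corollaire 1.7.4]{raynaud}, which is what the paper cites at this point; it is a genuine theorem (proved via the Fourier--Mukai transform) and not a dimension count. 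Either cite Raynaud here or supply a real argument; as written, the rank $2$ and rank $3$ cases are not closed.
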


\begin{proof}
By Riemann--Roch, $\chi(\cE \otimes \cL) = 0$, so $\Theta_\cE$ is the zero locus of a determinant (see, for example, \cite[\S 3]{popa}) and hence has codimension at most $1$. By \cite[Corollaire 1.7.4]{raynaud}, there exists $\cL \in \rJ^1(C)$ with $\rH^0(C; \cE \otimes \cL) = 0$, so the codimension of $\Theta_\cE$ is exactly $1$. 

So, the locus 
\[
\{(E, \cL) \in \SU_3(C) \times \rJ^1(C) \mid \rH^0(C; \cE \otimes \cL) \ne 0\}
\]
is a divisor in $\SU_3(C) \times \rJ^1(C)$ and hence is the zero section of some line bundle whose fibers over a fixed point $[\cE]$ of $\SU_3(C)$ are the line bundles cutting out $\Theta_\cE$, so we see that they are algebraically equivalent to one another, and hence we get a map $f \colon \SU_3(C) \to \Pic(\rJ^1(C)) \cong \Jac(C)$. In the special case $\cE = \cL_1 \oplus \cL_2 \oplus \cL_3$ for line bundles $\cL_i$ (which must be of degree $0$ by semistability), we see that $\Theta_\cE$ is linearly equivalent to $\Theta_{\cL_1} + \Theta_{\cL_2} + \Theta_{\cL_3}$, which in turn is linearly equivalent to $3\Theta$ via the theorem of the square. 

We see that $f$ contracts positive dimensional subvarieties (in particular, curves). Now let $\cM$ be an ample line bundle on $\Jac(C)$. Then $f^*(\cM)$ is constant along such curves and hence cannot be ample. Finally, the Picard group of $\SU_3(C)$ is isomorphic to $\bZ$ with ample generator \cite[Corollary 3.4]{hoffmann}, so we conclude that $f^*(\cM)$ is trivial, which implies that $f$ is a constant map. Hence, $\Theta_\cE$ is linearly equivalent to $3\Theta$ for all $\cE$.
\end{proof}

Lemma~\ref{lem:theta-map} implies that there is a well-defined morphism
\begin{align*}
\theta \colon \SU_3(C) &\to |3\Theta| \cong \bP^8\\
\cE &\mapsto \Theta_\cE.
\end{align*}
Define $\cO_{\SU_3(C)}(1) = \theta^* \cO_{\bP^8}(1)$. Let $\iota$ be the hyperelliptic involution of $C$.

\begin{proposition} \label{prop:involution}
$\theta$ commutes with the involution $\cE \mapsto \iota^* \cE^*$ on $\SU_3(C)$, and this involution is not the identity map on $\SU_3(C)$.
\end{proposition}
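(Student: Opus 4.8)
The plan is to handle the two claims in turn. First note that $\cE\mapsto \iota^*\cE^*$ is a well-defined involution of $\SU_3(C)$: pullback along the automorphism $\iota$ and dualization each preserve rank, semistability, and S-equivalence; $\det(\iota^*\cE^*)=\iota^*(\det\cE)^{-1}=\cO_C$; and applying the operation twice returns $\cE$. Since $\theta$ carries the trivial involution on its target, the commuting statement is precisely the equality of divisors $\Theta_{\iota^*\cE^*}=\Theta_\cE$ in $|3\Theta|$, and that is what I would prove.

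For this equality, fix $\cL\in\rJ^1(C)$ and compute $\rH^0(C;\iota^*\cE^*\otimes\cL)\cong \rH^0(C;\cE^*\otimes\iota^*\cL)$ by pulling back along $\iota$ (using $\iota^*\iota^*=\mathrm{id}$). Serre duality identifies this with $\rH^1(C;\cE\otimes(\iota^*\cL)^{-1}\otimes\omega_C)^*$, and since $(\iota^*\cL)^{-1}\otimes\omega_C$ has degree $1$, the bundle $\cE\otimes(\iota^*\cL)^{-1}\otimes\omega_C$ has Euler characteristic $0$, so its $\rH^1$ is nonzero exactly when its $\rH^0$ is. Finally the hyperelliptic identity $\iota^*\cL\cong\omega_C\otimes\cL^{-1}$ on $\rJ^1(C)$ (which holds because $Q+\iota Q\sim 2P\sim\omega_C$ and $\iota$ fixes $P$) gives $(\iota^*\cL)^{-1}\otimes\omega_C\cong\cL$. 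Hence $\cL\in\Theta_{\iota^*\cE^*}$ iff $\cL\in\Theta_\cE$, proving $\theta\circ(\cE\mapsto\iota^*\cE^*)=\theta$.

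The nontriviality is the real obstacle, because $\theta$ cannot detect it: the same computation yields $\theta(\cE^*)=\theta(\iota^*\cE)=\iota^*\theta(\cE)$, so the two factors $\cE\mapsto\cE^*$ and $\cE\mapsto\iota^*\cE$ both cover the nontrivial involution $\iota^*$ of $\bP^8$, while their composite covers the identity. Moreover a direct check shows the obvious candidates for moved points—the polystable bundles $\bigoplus\cL_i$, and the bundles $\cF\oplus\cL$ with $\cF$ of rank $2$—are all fixed (using that $\iota^*$ is $-1$ on $\rJ(C)$ and that $\iota^*\cF\cong\cF\otimes(\det\cF)^{-1}$ in rank $2$), so nontriviality can only be seen on the genuinely stable locus. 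It suffices to exhibit a moved point after base change to $\ol\bk$. If no stable bundle is fixed, then since stable bundles exist the involution already moves one; otherwise I would pick a stable fixed $\cE$ and study the differential $d\tau$ of $\tau=(\cE\mapsto\iota^*\cE^*)$ on the tangent space $\rH^1(C;\pgl(\cE))$ (the dual of the cotangent space recorded above), which has dimension $8$.

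Assuming in addition $\mathrm{char}\,\bk=0$, I would compute the trace of the involution $d\tau$ by the holomorphic Lefschetz fixed point formula applied to $\iota$ with its natural lift to $\pgl(\cE)$. Here $\iota$ has the $6$ Weierstrass points as simple fixed points, with $d\iota=-1$ on each tangent line, so each Atiyah--Bott denominator is $\det(1-d\iota)=2$; at such a point the chosen isomorphism $\cE\cong\iota^*\cE^*$ restricts to a nondegenerate symmetric form on the fiber (symmetric, since the rank is odd), and the fiberwise action on $\pgl(\cE)_p\cong\fsl_3$ is the outer involution $X\mapsto -X^{T}$, whose fixed algebra is $\fso_3$ and whose trace is $3-5=-2$. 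Each fixed point thus contributes $-2/2=-1$, giving Lefschetz number $-6$; since $\rH^0(C;\pgl(\cE))=0$ for stable $\cE$, this forces $\mathrm{tr}\bigl(d\tau\mid\rH^1(C;\pgl(\cE))\bigr)=6$. An involution of an $8$-dimensional space with trace $6$ has a $1$-dimensional $(-1)$-eigenspace, so $d\tau\ne\mathrm{id}$ and hence $\tau\ne\mathrm{id}$ (reassuringly, the $1$-dimensional $(-1)$-eigenspace matches the fixed locus being the ramification divisor of the degree-$2$ map $\theta$). The two steps I expect to be delicate are pinning down the natural $\iota$-equivariant lift of $\pgl(\cE)$ together with its fiberwise traces at the Weierstrass points, and extending the conclusion to positive characteristic, where the $\pm$-eigenspace bookkeeping degenerates in characteristic $2$ and a separate argument is required (cf. Remark~\ref{rmk:char2}).
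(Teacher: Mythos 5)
Your proof of the first assertion (the divisor identity $\Theta_{\iota^*\cE^*}=\Theta_\cE$ via Serre duality and $\cL\otimes\iota^*\cL\cong\omega_C$ for $\cL$ of degree $1$) is correct and characteristic-free; the paper simply cites \cite[Proposition 4.1]{ortega} for it. For the nontriviality of the involution your route is genuinely different from the paper's: you take a stable $\cE$ with $\cE\cong\iota^*\cE^*$ and show that the differential of the involution on the $8$-dimensional tangent space $\rH^1(C;\pgl(\cE))$ has trace $6$, hence a $1$-dimensional $(-1)$-eigenspace, via the holomorphic Lefschetz formula. In characteristic $0$ this checks out: the pairing $\cE\to\iota^*\cE^*$ is symmetric because the rank is odd, the outer involution on $\fsl_3$ has trace $-2$, each Weierstrass point contributes $-2/2=-1$, and $\rH^0(C;\fsl(\cE))=0$ for stable $\cE$; the dichotomy ``either no stable bundle is fixed, or apply the fixed-point computation'' is airtight. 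It is arguably more conceptual than the paper's argument, and it even identifies the $(-1)$-eigenspace with the conormal direction of the ramification divisor.

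However, there is a genuine gap: the proposition is asserted over an arbitrary field, and this is not cosmetic generality --- Proposition~\ref{prop:surj-deg2} uses exactly this nontriviality to conclude that $\theta$ has degree $2$ rather than $1$ in positive characteristic. Your method fails precisely where the paper has to work hardest: in characteristic $2$ the fixed-point denominators $\det(1-d\iota_p)=2$ vanish (the fixed points of $\iota$ are no longer transversal) and the symmetric/antisymmetric decomposition at the Weierstrass points degenerates; in characteristic $3$ one has $\pgl_3\ne\fsl_3$, $\rH^0(C;\pgl(\cE))$ need not vanish for stable $\cE$, and the fiberwise trace changes. A specialization argument does not rescue this, since the fixed locus is closed and nontriviality on the generic fibre of a lift says nothing about the special fibre. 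You flag the issue yourself, but flagging it is not filling it. The paper's proof is engineered around exactly this point: it reduces to producing a stable rank-$2$ bundle $\cF$ with $\iota$-invariant determinant $\cL$ of degree $1$ such that $\iota^*\cF\not\cong\cF\otimes\alpha$ for all $\alpha\in\rJ(C)[2]$, then uses the realization of $\SU_2(C;\cL)$ as a complete intersection of two quadrics in $\bP^5$ --- valid in characteristic $2$ by \cite{bhosle} --- together with the Picard group computation of \cite{hoffmann} and the proof of \cite[Lemma 7]{newstead}. To complete your argument you would need to supply a statement of this kind, or some other characteristic-free substitute, for $p=2$ and $p=3$.
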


\begin{proof}
We may assume that $\bk$ is algebraically closed.

The first statement is \cite[Proposition 4.1]{ortega}. For the second statement, we will adapt the argument in \cite[\S 4]{ortega}, which uses results only stated in characteristic $0$.

Let $\cL$ be a line bundle on $C$ of degree $1$ with $\iota^* \cL \cong \cL$, and let $\SU_2(C; \cL)$ be the moduli space of isomorphism classes of stable rank $2$ vector bundles with determinant $\cL$. As in \cite[\S 4]{ortega}, it is sufficient to find $\cF \in \SU_2(C; \cL)$ such that $\iota^* \cF \not\cong \cF \otimes \alpha$ for every $\alpha \in \rJ(C)[2]$.

First, $\SU_2(C; \cL)$ can be embedded into $\bP^5$ as the complete intersection of two quadrics (see \cite[Theorem 1]{desale-ramanan} if the characteristic is $\ne 2$, and \cite{bhosle} for characteristic $2$) and its Picard group is isomorphic to $\bZ$ \cite[Corollary 3.4]{hoffmann}. Furthermore, only one of the generators of its Picard group has global sections, and so any automorphism of $\SU_2(C; \cL)$ is trivial on its Picard group. Since $\SU_2(C; \cL)$ is linearly normal in $\bP^5$, any automorphism of $\SU_2(C; \cL)$ extends to an automorphism of $\bP^5$. We finish by using the proof of \cite[Lemma 7]{newstead}.
\end{proof}

\begin{proposition} \label{prop:picard-group}
$\cO_{\SU_3(C)}(1)$ generates the Picard group of $\SU_3(C)$, which is isomorphic to $\bZ$. Furthermore, $\SU_3(C)$ is a normal, Gorenstein, locally factorial, projective, geometrically irreducible variety and its canonical bundle is $\cO_{\SU_3(C)}(-6)$.
\end{proposition}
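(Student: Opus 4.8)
The plan is to assemble the statement from the standard structure theory of moduli of bundles on a curve, paying attention to two normalization issues and to arbitrary characteristic. Projectivity and geometric irreducibility are built into the GIT construction of the coarse moduli space (\cite{popa}, \cite{HL}). I would first record the two facts that make the rest possible: the stable locus $\SU^{\rm s}_3(C)$ is smooth of dimension $(r^2-1)(g-1)=8$ (deformations of stable bundles on a curve are unobstructed), and its complement, the strictly semistable locus, has codimension at least $2$. Indeed, its largest stratum consists of bundles S-equivalent to $\cF\oplus\cL$ with $\cF$ a rank $2$ semistable bundle and $\cL\in\rJ(C)$; this has dimension $3(g-1)+2=5$, and the stratum of splittings into three line bundles has dimension $2g=4$, so the boundary has codimension $3$. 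In particular $\SU_3(C)$ is regular in codimension $1$.

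Next I would invoke structure theory to get that $\SU_3(C)$ is normal, Cohen--Macaulay and locally factorial. In characteristic $0$ this is Dr\'ezet--Narasimhan (local factoriality, the Picard computation) together with Boutot's theorem that a GIT quotient of a smooth variety by a reductive group has rational, hence Cohen--Macaulay, singularities. The Picard group is $\bZ$ with an ample generator in arbitrary characteristic by \cite[Corollary 3.4]{hoffmann}; write $\cL$ for this generator. Granting Cohen--Macaulayness and local factoriality, the dualizing sheaf is a reflexive rank one sheaf and hence a line bundle, so $\SU_3(C)$ will be Gorenstein as soon as its canonical sheaf is exhibited as a line bundle. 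The point that needs genuine care in positive characteristic is precisely normality and Cohen--Macaulayness, where Boutot's theorem is unavailable; this is the main obstacle. I would resolve it either by citing a characteristic-free analysis of these moduli spaces, or by exploiting the finite map $\theta$ (see below) to present $\SU_3(C)$ as a double cover of the smooth variety $\bP^8$, which is automatically Cohen--Macaulay and Gorenstein.

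To identify $\cO_{\SU_3(C)}(1)$ with the generator $\cL$, I would use that $\theta$ is, by its construction in Lemma~\ref{lem:theta-map}, the morphism attached to the determinant-of-cohomology (theta) line bundle of the family $\cE\otimes\cL_1$ with $\deg\cL_1=g-1=1$, that this theta bundle is exactly the ample generator of $\Pic(\SU_3(C))$, and that $\theta$ is given by the complete linear system whose image is $|3\Theta|\cong\bP^8$; hence $\theta^{*}\cO_{\bP^8}(1)=\cL$. For the canonical bundle I would work on the smooth locus, where the excerpt identifies the cotangent fiber at $[\cE]$ with $\rH^0(C;\pgl(\cE)\otimes\omega_C)$ (\cite[Theorem 4.5.4]{HL}). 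By Serre duality and stability the higher cohomology of $\pgl(\cE)\otimes\omega_C$ vanishes, so $\omega_{\SU_3(C)}$ on $\SU^{\rm s}_3(C)$ is the determinant of cohomology of the adjoint family twisted by $\omega_C$; the Grothendieck--Riemann--Roch (Dr\'ezet--Narasimhan) computation, read as an identity of determinant line bundles valid in any characteristic, gives $\cL^{-2r}=\cL^{-6}$. Since the complement of $\SU^{\rm s}_3(C)$ has codimension at least $2$ and $\SU_3(C)$ is normal, this extends to an isomorphism $\omega_{\SU_3(C)}=\cL^{-6}=\cO_{\SU_3(C)}(-6)$ of line bundles on all of $\SU_3(C)$, which also confirms that $\SU_3(C)$ is Gorenstein.

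As a consistency check and an alternative route to Cohen--Macaulayness, the identity $\omega_{\SU_3(C)}=\cO_{\SU_3(C)}(-6)$ feeds back into the double cover picture: since $\theta^{*}\cO_{\bP^8}(1)=\cL$ is ample, $\theta$ is finite, hence a flat double cover of the smooth $\bP^8$, and the relative dualizing formula $\omega_{\SU_3(C)}=\theta^{*}(\omega_{\bP^8}\otimes\cM)$ with $\cM=\det(\theta_*\cO_{\SU_3(C)})^{-1}$ then forces $\cM=\cO_{\bP^8}(3)$ and identifies the branch divisor $\cM^{\otimes 2}=\cO_{\bP^8}(6)$ as a sextic (this last formula is character-free, only the reducedness of the branch locus being sensitive to characteristic $2$). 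I expect the main difficulty to lie in the two normalization points above, namely pinning down that $\cO_{\SU_3(C)}(1)$ is the generator rather than a proper power and that the determinant-of-cohomology constant is exactly $2r$, together with establishing normality and Cohen--Macaulayness uniformly across all characteristics.
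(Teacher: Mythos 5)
Your primary route is essentially the paper's proof: \cite[Corollaries 3.4 and 3.8]{hoffmann} for the Picard group, local factoriality and projectivity, a characteristic-free reference for normality and the Cohen--Macaulay/Gorenstein property (the paper uses \cite[\S 2]{balaji-mehta}), and the Dr\'ezet--Narasimhan determinant-of-cohomology computation, which goes through in any characteristic once the rest is known, for $\omega_{\SU_3(C)}=\cO_{\SU_3(C)}(-6)$. One caution: your proposed ``alternative route'' to Cohen--Macaulayness via $\theta$ being a finite flat double cover of $\bP^8$ is circular in the paper's logical order, since Proposition~\ref{prop:surj-deg2} deduces finiteness and flatness of $\theta$ (via miracle flatness) precisely from the Gorenstein property established in this proposition, so that route cannot replace the citation to \cite{balaji-mehta}.
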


\begin{proof}
The fact that $\SU_3(C)$ is a locally factorial projective variety is \cite[Corollary 3.8]{hoffmann} and $\Pic(\SU_3(C)) \cong \bZ$ is \cite[Corollary 3.4]{hoffmann}. The normal and Gorenstein properties are shown in \cite[\S 2]{balaji-mehta}. The calculation of the canonical bundle was done in characteristic $0$ in \cite[\S 7.5]{drezet}, but subject to the other properties above being known, the same proof goes through.
\end{proof}

\begin{proposition} \label{prop:surj-deg2}
$\theta$ is a surjective, finite, flat morphism of degree $2$. It is branched along a degree $6$ hypersurface in $|3\Theta| \cong \bP^8$.
\end{proposition}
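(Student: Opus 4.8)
The plan is to use that both $\SU_3(C)$ and $\bP^8$ are irreducible projective varieties of dimension $8$, and that $\cO_{\SU_3(C)}(1)=\theta^*\cO_{\bP^8}(1)$ is the ample generator of $\Pic(\SU_3(C))\cong\bZ$ (Proposition~\ref{prop:picard-group}; it is globally generated as the pullback of a globally generated bundle, and a nonzero globally generated generator of $\bZ\cong\Pic$ of a projective variety is ample). First I would establish finiteness: if $\theta$ contracted a curve $Z$, then $\cO_{\SU_3(C)}(1)|_Z=\theta^*\cO_{\bP^8}(1)|_Z$ would have degree $0$, contradicting ampleness; hence $\theta$ is quasi-finite, and being proper it is finite. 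A finite morphism is closed and preserves dimension, so the image is a closed irreducible $8$-dimensional subvariety of the irreducible $8$-fold $\bP^8$, whence $\theta$ is surjective. Flatness then follows by miracle flatness: $\SU_3(C)$ is Cohen--Macaulay (it is Gorenstein by Proposition~\ref{prop:picard-group}), $\bP^8$ is regular, and the fibers are $0$-dimensional, equal to $\dim\SU_3(C)-\dim\bP^8$, so $\theta$ is flat.

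It remains to compute the degree $d:=\deg\theta$, which is the rank of the locally free sheaf $\cF:=\theta_*\cO_{\SU_3(C)}$. Write $\sigma$ for the involution $\cE\mapsto\iota^*\cE^*$. By Proposition~\ref{prop:involution}, $\sigma$ is nontrivial and satisfies $\theta\circ\sigma=\theta$; since $\SU_3(C)$ is irreducible, the fixed locus of $\sigma$ is a proper closed subset, so a general fiber of $\theta$ contains the two distinct points $\cE$ and $\sigma\cE$, giving $d\ge 2$. To constrain $d$ I would compute the relative dualizing sheaf: from $\omega_{\SU_3(C)}=\cO(-6)$ (Proposition~\ref{prop:picard-group}) and $\omega_{\bP^8}=\cO(-9)$ we get $\omega_{\SU_3(C)/\bP^8}=\omega_{\SU_3(C)}\otimes\theta^*\omega_{\bP^8}^{-1}=\cO_{\SU_3(C)}(3)$. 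Relative duality for the finite flat morphism $\theta$ gives $\theta_*\omega_{\SU_3(C)/\bP^8}\cong\cF^\vee$, while the projection formula gives $\theta_*\cO_{\SU_3(C)}(3)\cong\cF(3)$; hence
\[
\cF^\vee\cong\cF(3).
\]
Taking determinants yields $(\det\cF)^{-1}\cong\det\cF\otimes\cO(3d)$, so that
\[
(\det\cF)^{-2}\cong\cO_{\bP^8}(3d),
\]
forcing $d$ to be even and $\det\cF=\cO(-3d/2)$.

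The main obstacle is to rule out $d\ge 4$ and conclude $d=2$ in arbitrary characteristic; this is the crux of the argument. One clean route is specialization: spreading out $C$, its relative moduli space, and the theta map over a mixed-characteristic base (a complete DVR with characteristic-$0$ fraction field, or an open subscheme of $\Spec\bZ$), the degree is the rank of $\theta_*\cO$, which is locally constant in a finite flat family and hence constant on a connected base; since the degree is $2$ over a characteristic-$0$ point by \cite{ortega, minh}, it is $2$ in every characteristic. Equivalently, and more geometrically, one shows that $\theta$ factors as the quotient $\SU_3(C)\to\SU_3(C)/\langle\sigma\rangle$ followed by a finite birational morphism to the normal variety $\bP^8$, i.e. that a general divisor $D\in|3\Theta|$ equals $\Theta_\cE$ for a single $\sigma$-orbit of bundles. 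I expect the verification that the general fiber is exactly one $\sigma$-orbit (rather than a union of several) to be where the real geometric input lies.

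Finally, granting $d=2$, the branch locus is the zero scheme of the discriminant section of $(\det\cF)^{-2}=\cO_{\bP^8}(6)$, so $\theta$ is branched along a hypersurface of degree $6$. In characteristic $\ne 2$ this is transparent: the trace splits the unit off to give $\cF\cong\cO\oplus\cL^{-1}$ with $\cL=\cO_{\bP^8}(3)$ (reading off $\det\cF=\cO(-3)$), the algebra structure is a section of $\cL^{2}=\cO_{\bP^8}(6)$, and its vanishing is the branch divisor. In characteristic $2$ the splitting can fail and $\theta$ may be inseparable, so the branch behavior must be analyzed separately; see Remark~\ref{rmk:char2}.
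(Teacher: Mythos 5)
Your overall strategy is the paper's: finiteness and flatness from ampleness plus miracle flatness, the lower bound $\deg\theta\ge 2$ from the nontriviality of the involution $\cE\mapsto\iota^*\cE^*$ (Proposition~\ref{prop:involution}), the upper bound by specialization to characteristic $0$, and the degree of the branch divisor from $\omega_{\SU_3(C)}=\cO(-6)$. Your relative-duality computation showing $\cF^\vee\cong\cF(3)$, hence that the degree is even and $\det\cF=\cO(-3d/2)$, is a nice touch not in the paper (the paper instead reads off $\cR=\cO_{\SU_3(C)}(3)$ and concludes $\theta_*\cO\cong\cO\oplus\cO(-3)$ directly), but it does not close the case $d\ge 4$, as you acknowledge.

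The one genuine gap is in the step you call ``clean'': asserting that the degree ``is locally constant in a finite flat family'' presupposes that the relative theta map $\theta_R\colon \SU_3(\cC/R)\to\bP^8_R$ over the mixed-characteristic base is itself finite and \emph{flat}, equivalently that the relative moduli space is $R$-flat. Your miracle-flatness argument only gives flatness fiber by fiber over $\Spec R$, and CM-ness of the total space of Langer's relative moduli space (which miracle flatness over $\bP^8_R$ would require) is not available without already knowing $R$-flatness. The paper's proof is designed to avoid exactly this: it only invokes Langer's existence theorem to specialize \emph{points} --- lift a point of $|3\Theta|(\bk)$ to the fraction field, take its at most two preimages under $\theta_\bK$ (known to have degree $2$ by Laszlo), and reduce the resulting bundles mod the maximal ideal --- which yields surjectivity and $\deg\theta\le 2$ with no relative flatness statement. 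Either patch your argument by proving $R$-flatness of the relative moduli space, or substitute the pointwise specialization. A minor further point: in characteristic $2$ the map cannot be inseparable, since the involution already forces two distinct points in the generic fiber; the discriminant is therefore a nonzero section of $(\det\cF)^{-2}\cong\cO_{\bP^8}(6)$ in every characteristic, so no separate analysis is needed to get a degree $6$ branch divisor (its non-reducedness in characteristic $2$ is the content of Remark~\ref{rmk:char2}).
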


\begin{proof}
We may assume that $\bk$ is algebraically closed. Let $R$ be a complete DVR whose residue field is $\bk$ and whose fraction field $\bK$ is of characteristic $0$. Then there exists a smooth curve $\cC$ over $R$ such that $\cC_\bk = C$ and $\cC_\bK$ is smooth. 

From \cite[Theorem 4.1]{langer}, there is a projective scheme $M_{\cC/R}(r,d)$ of finite type over $R$ so that for any $R$-scheme $T$, $\hom(T, M_{\cC/R}(r,d))$ is the set of S-equivalence classes of families of rank $r$ semistable bundles of degree $d$ on the geometric fibers of $T \times_R \cC \to T$ which are flat over $T$. Consequently, given $\cE_\bK \in \SU_3(\cC_\bK)$, we can extend it uniquely to a flat bundle $\cE_R$ over $\cC_R$ and then reduce it to a bundle $\cE_\bk \in \SU_3(C)$. This gives a commutative square
\[
\xymatrix{
\SU_3(\cC_\bK) \ar[r] \ar[d]_-{\theta_\bK} & \SU_3(C) \ar[d]^-\theta \\
|3\Theta|_\bK \ar[r] & |3\Theta|
}.
\]

We know that $\theta_\bK$ is a surjective degree $2$ morphism \cite[\S V, Lemma 5]{laszlo}. The bottom map is surjective onto $\bk$-points, and if we pick a preimage in $|3\Theta|_\bK$ of each point in $|3\Theta|$, then the restriction of the composition $\SU_3(\cC_\bK) \to |3\Theta|_\bK \to |3\Theta|$ to points that lie over these preimages is surjective and generically of degree $2$. We deduce that $\theta$ is surjective and of degree at most $2$. To get that the degree is $2$, we use Proposition~\ref{prop:involution}.

Since $\SU_3(C)$ is Cohen--Macaulay (Proposition~\ref{prop:picard-group} gives Gorenstein), $\theta$ is a finite flat morphism and so $\theta_* \cO_{\SU_3(C)}$ is a rank $2$ vector bundle on $\bP^8$. Furthermore, $\omega_{\SU_3(C)} \cong \theta^* \omega_{\bP^8} \otimes \cR$ where the ramification divisor is a section of $\cR$. By Proposition~\ref{prop:picard-group}, we conclude that $\cR = \cO_{\SU_3(C)}(3)$, and so $\theta_* \cO_{\SU_3(C)} \cong \cO_{\bP^8} \oplus \cO_{\bP^8}(-3)$. We conclude that $\theta$ is branched along a degree $6$ hypersurface.
\end{proof}

\begin{corollary} \label{cor:verlinde}
The Verlinde formula is valid for $\SU_3(C)$ in arbitrary characteristic. More precisely, $\cO_{\SU_3(C)}(d)$ has cohomology concentrated in a single degree $i$, with $i=0$ if $d \ge 0$ and $i=8$ if $d \le -6$, and the cohomology vanishes otherwise. Its Hilbert polynomial is
\[
\chi(\cO_{\SU_3(C)}(d)) = \frac{2}{8!} (d+1)(d+2)(d+3)^2(d+4)(d+5)(d^2+6d+56).
\]
\end{corollary}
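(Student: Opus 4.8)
The plan is to reduce the entire computation to the cohomology of line bundles on $\bP^8$ by pushing forward along the finite flat map $\theta$. By definition $\cO_{\SU_3(C)}(d) = \theta^*\cO_{\bP^8}(d)$, so the projection formula together with the identification $\theta_*\cO_{\SU_3(C)} \cong \cO_{\bP^8} \oplus \cO_{\bP^8}(-3)$ established in the proof of Proposition~\ref{prop:surj-deg2} gives
\[
\theta_* \cO_{\SU_3(C)}(d) \cong \cO_{\bP^8}(d) \oplus \cO_{\bP^8}(d-3).
\]
Since $\theta$ is finite, hence affine, it has no higher direct images of quasi-coherent sheaves, so the Leray spectral sequence collapses and $\rH^i(\SU_3(C); \cO_{\SU_3(C)}(d)) \cong \rH^i(\bP^8; \cO_{\bP^8}(d)) \oplus \rH^i(\bP^8; \cO_{\bP^8}(d-3))$ for every $i$.

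Next I would invoke the standard computation of line bundle cohomology on $\bP^8$: the groups $\rH^i(\bP^8; \cO(k))$ vanish for $0 < i < 8$; $\rH^0$ is nonzero exactly when $k \ge 0$; and $\rH^8$ is nonzero exactly when $k \le -9$. Applying this to each summand, $\rH^0$ of $\cO_{\SU_3(C)}(d)$ is nonzero exactly when $d \ge 0$ (contributed by $\cO(d)$), while $\rH^8$ is nonzero exactly when $d \le -6$ (contributed by $\cO(d-3)$, since then $d-3 \le -9$). In the intermediate range $-5 \le d \le -1$ both $\rH^0$ and $\rH^8$ vanish, and all middle cohomology vanishes as well, which yields precisely the stated single-degree concentration and the vanishing otherwise.

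For the Hilbert polynomial I would take Euler characteristics of the same splitting, obtaining
\[
\chi(\cO_{\SU_3(C)}(d)) = \binom{d+8}{8} + \binom{d+5}{8} = \frac{1}{8!}\Big[\textstyle\prod_{j=1}^{8}(d+j) + \prod_{j=-2}^{5}(d+j)\Big].
\]
Both products share the common factor $(d+1)(d+2)(d+3)(d+4)(d+5)$; pulling it out leaves the bracketed quantity $(d+6)(d+7)(d+8) + (d-2)(d-1)d$, which expands to $2(d+3)(d^2+6d+56)$, a routine verification. This reproduces exactly the claimed formula. The characteristic-independence asserted in the statement is then automatic: the sheaf $\theta_*\cO_{\SU_3(C)}$ has the same form in every characteristic, so the whole computation, and hence $\chi$, agrees with the classical Verlinde formula computed in characteristic $0$; and because the cohomology is concentrated in degree $0$ for $d \ge 0$, $\chi(\cO_{\SU_3(C)}(d))$ equals $\dim \rH^0$, which is the Verlinde number.

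There is no substantial obstacle at this stage: all the heavy lifting has already been done in Proposition~\ref{prop:surj-deg2} (finiteness, flatness, and the explicit shape of $\theta_*\cO_{\SU_3(C)}$). The only points needing genuine care are confirming the complete vanishing in the intermediate range $-5 \le d \le -1$ and checking the polynomial identity $(d+6)(d+7)(d+8)+(d-2)(d-1)d = 2(d+3)(d^2+6d+56)$, both of which are elementary.
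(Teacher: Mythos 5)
Your argument is correct, and it rests on exactly the same input as the paper's proof, namely the structure of $\theta$ established in Proposition~\ref{prop:surj-deg2}; but the packaging is genuinely different. The paper embeds $\SU_3(C)$ as a degree~$6$ hypersurface in the weighted projective space $\bP(1^9,3)$ and reads everything off the resolution $0 \to \cO_{\bP}(-6) \to \cO_{\bP} \to \cO_{\SU_3(C)} \to 0$, using that line bundles on that weighted projective space have cohomology in at most one degree. You instead stay on $\bP^8$ and apply the projection formula to the splitting $\theta_*\cO_{\SU_3(C)} \cong \cO_{\bP^8} \oplus \cO_{\bP^8}(-3)$, which collapses the computation to ordinary line bundles on ordinary projective space; the two routes are linked by the fact that the weighted hypersurface description is just the relative $\Spec$ of this rank-$2$ algebra, and indeed the paper's Euler characteristic $\sum_{k\ge 0}\bigl[\binom{d-3k+8}{8}-\binom{d-3k+2}{8}\bigr]$ telescopes to your $\binom{d+8}{8}+\binom{d+5}{8}$. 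Your version is arguably more elementary (no cohomology of weighted projective spaces needed) and makes the vanishing ranges completely transparent; the paper's version has the minor advantage of exhibiting $\SU_3(C)$ as a concrete hypersurface, which is used again elsewhere. Your closing remarks on characteristic-independence and the numerical identity $(d+6)(d+7)(d+8)+(d-2)(d-1)d = 2(d+3)(d^2+6d+56)$ both check out, so there is no gap.
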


\begin{proof}
We can embed $\SU_3(C)$ as a degree $6$ hypersurface in a weighted projective space $\bP$ with $9$ variables of degree $1$ and $1$ variable of degree $3$. So it has a locally free resolution $0 \to \cO_{\bP}(-6) \to \cO_{\bP} \to \cO_{\SU_3(C)} \to 0$ and the cohomology of $\cO_{\bP}(d)$ is at most in a single degree (either $0$ or $9$). Hence, the cohomology of $\cO_{\SU_3(C)}(d)$ is at most in a single degree (either in $0$ or $8$). So its Hilbert polynomial is independent of characteristic, and can be read off from the locally free resolution.
\end{proof}

\section{Discriminants} \label{ss:construct-trivector}

As in the previous section, let $C$ be a smooth genus $2$ curve over $\bk$, and let $P \in C(\bk)$ be a Weierstrass point. Let $\rJ^1(C)$ be the space of degree $1$ line bundles over $C$ with its divisor $\Theta$ from above. From Proposition~\ref{prop:surj-deg2}, we have a degree $2$ map
\[
\theta \colon \SU_3(C) \to |3\Theta| \cong \bP^8
\]
which is branched along a degree $6$ hypersurface in $\bP^8$. Let $(\bP^8)^{\rm s}$ be the image of the stable locus in $\SU_3(C)$ under $\theta$.

Using \eqref{eqn:trace-form}, we have maps
\[
\gamma_\cE \colon \bigwedge^3 \rH^0(C; \pgl(\cE) \otimes \omega_C) \to \rH^0(C; \omega_C^{\otimes 3}) \to \rH^0(P; \omega_C^{\otimes 3}) \cong \bk
\]
where the last isomorphism is well-defined up to a global choice (independent of $\cE)$ of scalar. This gives a map
\[
\theta^* \Omega^3_{(\bP^8)^{\rm s}} \to \Omega^3_{\SU^{\rm s}_3(C)} \xrightarrow{\gamma} \cO_{\SU^{\rm s}_3(C)} = \theta^* \cO_{(\bP^8)^{\rm s}}.
\]

\begin{lemma}
The composition descends as an anti-symmetrized form on $(\bP^8)^{\rm s}$, so it gives a section of $(\bigwedge^3 \cT_{(\bP^8)^{\rm s}})(-3)$ where $\cT$ denotes the tangent sheaf. 
\end{lemma}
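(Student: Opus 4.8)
The plan is to recognize the composition, \emph{a priori} a morphism $\theta^*\Omega^3_{(\bP^8)^{\rm s}} \to \cO_{\SU^{\rm s}_3(C)}$, as a global section $\Phi$ of $\theta^*\bigwedge^3\cT_{(\bP^8)^{\rm s}}$ on $\SU^{\rm s}_3(C)$: it factors through $\bigwedge^3$ because the codifferential $\theta^*\Omega^1 \to \Omega^1$ is linear and the trilinear form $\alpha$ of \eqref{eqn:trace-form} is alternating, which is the ``anti-symmetrized'' in the statement. To descend $\Phi$ to $(\bP^8)^{\rm s}$ I would exploit that $\theta$ is finite flat of degree $2$ (Proposition~\ref{prop:surj-deg2}) with deck involution $\sigma\colon\cE\mapsto\iota^*\cE^*$ (Proposition~\ref{prop:involution}); since dualization and $\iota^*$ preserve stability, $\theta^{-1}((\bP^8)^{\rm s})=\SU^{\rm s}_3(C)$, so $\theta$ restricts to a degree-$2$ cover there. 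By the projection formula and the splitting $\theta_*\cO_{\SU_3(C)}\cong\cO_{\bP^8}\oplus\cO_{\bP^8}(-3)$ from the proof of Proposition~\ref{prop:surj-deg2}, global sections of $\theta^*\bigwedge^3\cT_{(\bP^8)^{\rm s}}$ split into the $\sigma$-invariant part $\rH^0(\bigwedge^3\cT_{(\bP^8)^{\rm s}})$ and the $\sigma$-anti-invariant part $\rH^0((\bigwedge^3\cT_{(\bP^8)^{\rm s}})(-3))$. Everything then reduces to showing that $\Phi$ is \emph{anti}-invariant, $\sigma^*\Phi=-\Phi$.

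The heart of the argument is this sign computation, which I would carry out pointwise at a stable bundle $\cE$. Writing $\sigma$ as dualization followed by $\iota^*$, the codifferential $\sigma^*\colon\rH^0(C;\pgl(\iota^*\cE^*)\otimes\omega_C)\to\rH^0(C;\pgl(\cE)\otimes\omega_C)$ is, up to the canonical identifications $\iota^*\omega_C\cong\omega_C$ and $\pgl(\cE^*)\cong\pgl(\cE)$, the composite of pullback by $\iota$ with the negative-transpose isomorphism $X\mapsto-X^{T}$. Two facts then control the sign. First, $\alpha$ is invariant under $X\mapsto-X^{T}$: since $\trace((-X_1^{T})(-X_2^{T})(-X_3^{T}))=-\trace((X_3X_2X_1)^{T})=-\trace(X_3X_2X_1)$ and likewise for the swapped term, cyclic invariance of the trace yields $\alpha(-X_1^{T},-X_2^{T},-X_3^{T})=\alpha(X_1,X_2,X_3)$; this is precisely the statement that the outer automorphism arising from duality fixes $\alpha$, contributing a factor $+1$. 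Second, evaluation at the Weierstrass point: because $\alpha$ outputs a section of $\omega_C^{\otimes3}$ and $\iota(P)=P$ with $d\iota_P=-1$ on $T_PC$, pulling back by $\iota$ and restricting to $P$ multiplies the value in $\omega_C^{\otimes3}|_P$ by $(-1)^3=-1$. Multiplying the two contributions gives $\sigma^*\gamma=-\gamma$; the global scalar fixing $\rH^0(P;\omega_C^{\otimes3})\cong\bk$ is independent of $\cE$ and so does not enter this transformation law.

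Combining the two steps, anti-invariance forces the $\sigma$-invariant component of $\Phi$ to vanish, so $\Phi$ lies in $\rH^0((\bigwedge^3\cT_{(\bP^8)^{\rm s}})(-3))$, which is the desired section. The main obstacle I anticipate is promoting the pointwise law into a statement about $\Phi$ as a global section of $\theta^*\bigwedge^3\cT$ carrying its canonical equivariant structure, and in particular identifying the codifferential of $\sigma$ cleanly as negative-transpose composed with $\iota^*$, so that the two independent signs $+1$ and $-1$ are correctly isolated; once $\theta^{-1}((\bP^8)^{\rm s})=\SU^{\rm s}_3(C)$ and finite flatness are in hand, the projection-formula splitting is routine.
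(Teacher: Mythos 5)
Your argument reproduces the paper's proof in characteristic different from $2$: push the section forward along the finite flat double cover, use the splitting $\theta_*\cO_{\SU_3(C)}\cong\cO_{\bP^8}\oplus\cO_{\bP^8}(-3)$, and show the section is anti-invariant under the deck involution $\sigma\colon\cE\mapsto\iota^*\cE^*$. Your two-sign bookkeeping (a factor $+1$ because the trace form is invariant under $X\mapsto -X^T$, and a factor $-1$ because $d\iota_P=-1$ acts by $(-1)^3$ on $\omega_C^{\otimes 3}|_P$) is a correct and more explicit version of the paper's one-line justification that $\sigma$ acts by $-1$ on $\gamma$.

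The gap is characteristic $2$, which the lemma must cover since the paper works over an arbitrary field. There your key step --- that global sections of $\theta^*\bigwedge^3\cT_{(\bP^8)^{\rm s}}$ split into a $\sigma$-invariant and a $\sigma$-anti-invariant part, with the latter equal to $\rH^0((\bigwedge^3\cT)(-3))$ --- fails: the sheaf splitting $\theta_*\cO\cong\cO\oplus\cO(-3)$ is not an eigenspace decomposition for $\sigma$ in characteristic $2$, and the condition $\sigma^*\Phi=-\Phi$ is literally the same as $\sigma^*\Phi=\Phi$, so it carries no information and does not force $\Phi$ into the $\cO(-3)$ summand. The paper's fix is to replace ``anti-invariant'' by the strictly stronger condition ``lies in the image of $1-\sigma$'': the exact sequence $0\to\cO_{\bP^8}\to\theta_*\cO_{\SU_3(C)}\xrightarrow{1-\sigma}\theta_*\cO_{\SU_3(C)}$ identifies that image with $\theta_*\cO_{\SU_3(C)}/\cO_{\bP^8}\cong\cO_{\bP^8}(-3)$ in every characteristic. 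In characteristic $\neq 2$ membership in the image follows from $\sigma^*\gamma=-\gamma$ by dividing by $2$, which recovers your argument; in characteristic $2$ one must instead exhibit $\gamma$ explicitly as $\beta-\sigma(\beta)$, which the paper does by taking $\beta(X_1,X_2,X_3)=\trace(\tilde{X}_1\tilde{X}_2\tilde{X}_3)$ with traceless lifts $\tilde{X}_i$ and observing that $\sigma$ swaps this form with $(X_1,X_2,X_3)\mapsto\trace(\tilde{X}_2\tilde{X}_1\tilde{X}_3)$. You need to supply this case to complete the proof.
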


\begin{proof}
The involution $\cE \mapsto \iota^* \cE^*$ induces an involution on $\theta_* \cO_{\SU_3(C)}$ which we denote by $\iota$. We have an exact sequence
\[
0 \to \cO_{\bP^8} \to \theta_* \cO_{\SU_3(C)} \xrightarrow{1-\iota} \theta_* \cO_{\SU_3(C)}
\]
which shows that the quotient $\theta_* \cO_{\SU_3(C)} / \cO_{\bP^8} \cong \cO_{\bP^8}(-3)$ is the image of the map $1-\iota$.

First suppose that the characteristic of $\bk$ is different from $2$. Then $\iota$ acts as $-1$ on $\gamma$ since $\cE \mapsto \iota^* \cE^*$ acts contravariantly on maps and in particular, $\gamma$ is in the image of $1-\iota$. If the characteristic of $\bk$ is $2$, then we can write $\gamma_\cE$ as the difference of the two forms $(X_1, X_2, X_3) \mapsto \trace(\tilde{X}_1 \tilde{X}_2 \tilde{X}_3)$ and $(X_1, X_2, X_3) \mapsto \trace(\tilde{X}_2 \tilde{X}_1 \tilde{X}_3) = \trace(\tilde{X}_3 \tilde{X}_2 \tilde{X}_1)$ where we have chosen lifts $\tilde{X}_i \in \rH^0(C; \fgl(\cE) \otimes \omega_C)$ with trace $0$ to make it well-defined. These two forms are swapped under the involution on $\SU_3(C)$, so again, $\gamma$ is in the image of $1-\iota$.

So $\gamma$ descends to a form $\Omega^3_{(\bP^8)^{\rm s}} \to \cO_{(\bP^8)^{\rm s}}(-3)$, i.e., a section of $(\bigwedge^3 \cT_{(\bP^8)^{\rm s}})(-3)$.
\end{proof}

The complement of the stable locus in $\SU_3(C)$ has codimension $3$, and so its image in $\bP^8$ has codimension $\ge 3$. In particular, given a vector bundle on $\bP^8$, any section of it  over $(\bP^8)^{\rm s}$ extends to a section on all of $\bP^8$. So we get a section of $(\bigwedge^3 \cT_{\bP^8})(-3)$. Finally, $\rH^0(\bP^8; (\bigwedge^3 \cT_{\bP^8})(-3)) = \bigwedge^3(\rH^0(\bP^8; \cO(1))^*)$, so we get 
\[
\gamma_{(C,P)} \in \bigwedge^3(\rH^0(\SU_3(C); \Theta)^*).
\]

\begin{definition} \label{def:Dgamma}
The fiber of $(\bigwedge^3 \cT_{\bP^8})(-3)$ over a point of $\bP^8$ is of the form $\bigwedge^3(\bk^8)$; in \S\ref{sec:w38}, we identified a $\GL_8$-invariant hypersurface in $\bigwedge^3(\bk^8)$, which we called the hyperdiscriminant. Given a section $\gamma \in \rH^0(\bP^8; (\bigwedge^3 \cT_{\bP^8})(-3))$, let $D_\gamma$ be the subscheme of points where the corresponding element in the fiber has vanishing hyperdiscriminant. 
\end{definition}

Locally, the hyperdiscriminant is a degree $16$ equation, and in basis-independent terms, the line that it spans is given by $(\det \bk^8)^6 \subset \Sym^{16}(\bigwedge^3(\bk^8))$. So the equation defining $D_\gamma$ is a section of $(\det \cT_{\bP^8})^6(-48) = \cO_{\bP^8}(6)$. As an aside, we can interpret $\bP^8$ as parametrizing lines in a $9$-dimensional vector space, so it has a tautological rank $8$ quotient bundle $\cQ$, and $\cT_{\bP^8} \cong \cQ \otimes \cO_{\bP^8}(1)$.

The following two statements are the main results about $\gamma_{(C,P)}$ for this section. Their proofs are given below in separate subsections.

\begin{proposition} \label{prop:inverse1}
$D_{\gamma_{(C,P)}}$ is the branch locus of $\theta$.
\end{proposition}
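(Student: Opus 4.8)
The plan is to prove the equality of the two sextic hypersurfaces by establishing the inclusion $B \subseteq D_{\gamma_{(C,P)}}$ directly, where $B$ denotes the branch locus of $\theta$, and then closing the gap using that both are divisors of degree $6$. Recall from Proposition~\ref{prop:surj-deg2} that $B$ is cut out by a section of $\cO_{\bP^8}(6)$, and from the discussion following Definition~\ref{def:Dgamma} that $D_{\gamma_{(C,P)}}$ is likewise the zero locus of a section of $\cO_{\bP^8}(6)$; so it suffices to compare supports and then argue equality of two degree-$6$ divisors.

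The key geometric input is the behavior of $\gamma$ under $\theta$. By construction $\gamma$ is built from the trace-form trivector $\tilde\gamma_\cE \in \bigwedge^3 T_\cE \SU_3(C)$ on the cotangent fiber $\rH^0(C; \pgl(\cE)\otimes\omega_C)$, pushed forward along $\theta$; concretely, at a stable point one has $\gamma_x = (\bigwedge^3 d\theta)(\tilde\gamma_\cE)$ up to the $\cO(-3)$ twist and the descent through $1-\iota$ used in the preceding lemma. First I would make this identification precise. Over the étale locus $d\theta$ is an isomorphism, so $\gamma_x$ is $\GL$-equivalent to $\tilde\gamma_\cE$; at the generic point of $B$, where $\theta$ is simply ramified, $d\theta$ has rank $7$ with one-dimensional kernel, so its image is a hyperplane $V_7 \subseteq T_x\bP^8$ and $\gamma_x \in \bigwedge^3 V_7$. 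A trivector supported on a $7$-dimensional subspace of an $8$-dimensional space is $\SL_8$-unstable, being driven to $0$ by the one-parameter subgroup of weight $+1$ on $V_7$ and weight $-7$ on the complementary line, so its hyperdiscriminant vanishes. Since $D_{\gamma_{(C,P)}}$ is closed, this gives $B \subseteq D_{\gamma_{(C,P)}}$.

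For the reverse inclusion I would work over the étale locus, where $\gamma_x \cong \tilde\gamma_\cE$, and show that $\tilde\gamma_\cE$ is $\SL$-stable whenever $\cE \not\cong \iota^*\cE^*$. By Proposition~\ref{prop:w38-kernel}(b), instability would furnish a $3$-dimensional $V_3 \subseteq \rH^0(C;\pgl(\cE)\otimes\omega_C)$ with $\trace([v_1,v_2]\,w)|_P = 0$ for all $v_1,v_2 \in V_3$ and all $w$, equivalently a $5$-dimensional $U \subseteq \rH^1(C;\pgl(\cE)) = T_\cE$ with $\tilde\gamma_\cE \in \bigwedge^3 U + \bigwedge^2 U \otimes (T_\cE/U)$ as in part~(a). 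The plan is to interpret such a reduction as a distinguished self-dual structure on $\pgl(\cE)$, matching the outer automorphism $X \mapsto -X^T$ of $\fsl_3$ (via Proposition~\ref{prop:alpha-stab} and Corollary~\ref{cor:sl-dense}) with dualization of bundles, and thereby forcing $\cE \cong \iota^*\cE^*$. This contradicts $x \notin B$, since by Proposition~\ref{prop:involution} the locus $B$ is precisely the image of the fixed locus of the deck involution $\cE \mapsto \iota^*\cE^*$. Granting this, $D_{\gamma_{(C,P)}} \subseteq B$, the supports agree, and since both are degree-$6$ divisors that are reduced in characteristic $\neq 2$ by the Remark above we obtain $D_{\gamma_{(C,P)}} = B$ as schemes. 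The complement of the stable locus has codimension $\geq 3$, so nothing is lost there, and the characteristic-$2$ case is treated separately using the description of $\gamma$ as a difference of two cubic trace forms together with the non-reduced structure recorded in the Remark.

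The step I expect to be the main obstacle is the reverse inclusion, namely showing that a destabilizing $V_3$ for the trace-form trivector forces the self-duality $\cE \cong \iota^*\cE^*$. The forward inclusion is clean once the pushforward identity $\gamma_x = (\bigwedge^3 d\theta)(\tilde\gamma_\cE)$ and the rank-$7$ degeneration are in hand, and it already pins the support of $D_{\gamma_{(C,P)}}$ along $B$; the genuine work is either completing this converse or, alternatively, bypassing it by establishing irreducibility of one of the two sextics, for instance of $B$ using the normality and local factoriality of $\SU_3(C)$ from Proposition~\ref{prop:picard-group}, so that the containment of equal-degree divisors upgrades at once to equality.
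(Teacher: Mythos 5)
There are two genuine gaps here, one in each inclusion. The more serious one is the reverse inclusion, which you correctly flag as ``the main obstacle'' but then leave as a plan: you propose to show that a destabilizing $V_3$ for $\tilde\gamma_\cE$ produces ``a distinguished self-dual structure'' forcing $\cE \cong \iota^*\cE^*$, without saying how. This is precisely where all the content of the proposition lives, and the paper's route is quite different and worth internalizing: the form $\gamma_\cE$ on $\rH^0(C;\pgl(\cE)\otimes\omega_C)$ factors through the restriction map to the single fiber $\rH^0(P;\pgl(\cE)\otimes\omega_C)$, where it becomes the standard trace form \eqref{eqn:trace-form} on $\pgl_3$ --- an element of the dense orbit by Corollary~\ref{cor:sl-dense}. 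Hence $\gamma_\cE$ has vanishing hyperdiscriminant if and only if that restriction map fails to be an isomorphism, which (via the short exact sequence twisted by $\cO(P)$ and $\chi(\pgl(\cE)\otimes\cO(P))=0$) happens if and only if $\rH^0(C;\pgl(\cE)\otimes\cO(P))\ne 0$. The equivalence of this last condition with $\cE\cong\iota^*\cE^*$ is Corollary~\ref{cor:restriction}, whose proof is a substantive cohomological argument resting on Lemma~\ref{lem:theta-char}; nothing in your proposal substitutes for it. Note also that this factorization gives \emph{both} implications at once (it is a biconditional on the stable locus), so the paper never needs your forward inclusion as a separate step.

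The forward inclusion as you argue it does not work, because the identification $\gamma_x=(\bigwedge^3 d\theta)(\tilde\gamma_\cE)$ fails exactly at branch points. The descent in the lemma preceding Definition~\ref{def:Dgamma} places $\tilde\gamma$ in the anti-invariant summand $\cO_{\bP^8}(-3)\subset\theta_*\cO_{\SU_3(C)}$, whose local generator (e.g.\ $2y+f$) vanishes on the ramification divisor; equivalently $\tilde\gamma=\sigma\cdot\theta^*\gamma$ with $\sigma$ cutting out the ramification locus. So at a ramification point $(\bigwedge^3 d\theta)(\gamma_\cE)=\sigma(\cE)\,\gamma_x=0$, and the actual value $\gamma_x$ is the limit of $\tilde\gamma_{\cE'}/\sigma(\cE')$, which in a local model $t\mapsto t^2$ picks up a nonzero component in the normal direction to $\im(d\theta)$ (in characteristic $\ne 2$). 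Your claim that $\gamma_x\in\bigwedge^3 V_7$ is therefore unjustified, and with it the instability argument. Finally, your closing step invokes reducedness of $D_{\gamma_{(C,P)}}$ ``by the Remark above,'' but that remark concerns the universal hyperdiscriminant on $\bigwedge^3(\bk^8)$, not its pullback along the section $\gamma$; the paper instead closes the argument with irreducibility of $D_\gamma$ from Corollary~\ref{cor:irred}, which comes from the projective duality with the cubic $Y_\gamma$ and is not available by elementary means at this stage.
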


\begin{proposition} \label{prop:gamma-stable}
Pick a basis for $\rH^0(\SU_3(C); \Theta)^*$. Then $\gamma_{(C,P)} \in \bigwedge^3 \bk^9$ is a stable element with respect to the natural action of $\SL_9(\bk)$.
\end{proposition}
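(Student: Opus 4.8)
The plan is to work over $\bk=\ol{\bk}$ (stability being a geometric condition) and to split the claim into its three constituents: that $\gamma:=\gamma_{(C,P)}$ is semistable, that its stabilizer in $\SL_9$ is finite, and that its $\SL_9$-orbit is closed. Write $W=\rH^0(\SU_3(C);\Theta)^*$, so that $\bP^8=\bP(W)$ and $\gamma\in\bigwedge^3 W$ with $\SL(W)\cong\SL_9$. The central device is the covariant $\Delta\colon\bigwedge^3 W\to\Sym^6 W^*$ which sends a trivector $\gamma'$ to the degree $6$ equation cutting out the scheme $D_{\gamma'}$ of Definition~\ref{def:Dgamma}; concretely $\Delta$ applies the hyperdiscriminant fiberwise to the section of $\bigwedge^3\cQ$ determined by $\gamma'$. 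By construction $\Delta$ is $\GL(W)$-equivariant and homogeneous of positive degree ($16$) in its argument, and by Proposition~\ref{prop:inverse1} the value $\Delta(\gamma)$ is a defining equation of the branch sextic $B$ of $\theta$.

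First I would establish semistability. Since $\Delta$ is equivariant and vanishes at the origin, any sequence $g_n\in\SL_9$ with $g_n\gamma\to 0$ gives $g_n\cdot\Delta(\gamma)=\Delta(g_n\gamma)\to 0$; hence if $\gamma$ lay in the null-cone, then so would $B=\Delta(\gamma)$ (or $B$ would vanish identically). The second alternative is excluded because $B$ is a genuine degree $6$ hypersurface (Proposition~\ref{prop:surj-deg2}), so it remains to rule out that $B$ is an unstable point of $\Sym^6 W^*$. This is where the geometry of $\theta$ enters: a null-cone sextic is forced by Hilbert--Mumford to degenerate along a linear subspace (its Newton polytope lies in an open half-space for some coordinate weighting), whereas $B$ is the branch locus of a finite flat double cover $\theta\colon\SU_3(C)\to\bP^8$ whose total space is normal, Gorenstein and locally factorial (Proposition~\ref{prop:picard-group}); outside characteristic $2$ this makes $B$ reduced and its singularities too mild to be destabilizing. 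Hence $\gamma$ is semistable.

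Next I would pin down the stabilizer and the orbit. Any $g\in\SL_9$ fixing $\gamma$ fixes $B=\Delta(\gamma)$ and commutes with $\theta$, so it descends to an automorphism of $\SU_3(C)$ preserving $\cO_{\SU_3(C)}(1)$ and the $\Theta$-divisor data; tracing this through the constructions of Section~\ref{sec:genus2-gen}, it is induced by an automorphism of $(C,P)$, and $\Aut(C,P)$ is finite since $C$ has genus $2$. Thus the stabilizer of $\gamma$ is finite. To upgrade ``semistable with finite stabilizer'' to genuine stability (closed orbit) I would use the equivariant map $\Delta$ once more, via the principle that a point mapping to a \emph{stable} point under a $G$-equivariant morphism is itself stable; it then suffices to know that $B$ is a stable, not merely non-null, sextic. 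The cleanest packaging is that $\bigwedge^3\bk^9$ with its $\SL_9$-action is the $\bZ/3$-graded (Vinberg) component of $\fe_8$, for which closed orbits are exactly the semisimple elements and stable points exactly the regular semisimple ones; the regularity of $\gamma$ is reflected in the singular locus of the dual of $B$ (the Coble cubic) being the smooth abelian surface $\rJ(C)$.

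The step I expect to be the main obstacle is establishing that $B$ is not merely nonzero but actually stable as a degree $6$ form---equivalently, ruling out every destabilizing one-parameter subgroup of $\gamma$---since Hilbert--Mumford for $\Sym^6(\bk^9)$ is unwieldy and the argument must control the singularities of the branch sextic precisely. I would route around this through the $\fe_8$ theta-group dictionary above, reducing it to the non-vanishing of the Vinberg discriminant on $\gamma$, which in turn reflects the smoothness of $C$ and of $\rJ(C)$. Finally, to reach all characteristics (and in particular characteristic $2$, where $B$ need not be reduced), I would prove the decisive non-vanishing over a characteristic $0$ point and spread it out over a complete discrete valuation ring, exactly as in the proofs of Proposition~\ref{prop:w38-kernel}(c) and Proposition~\ref{prop:surj-deg2}.
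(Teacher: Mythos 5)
Your overall architecture (push $\gamma$ through the equivariant covariant $\Delta\colon \bigwedge^3 W \to \Sym^6 W^*$, then pull stability back from the branch sextic $B$) is genuinely different from the paper's, but it has a circularity at exactly the step you flag as the main obstacle. To conclude that $B$ is a stable sextic you invoke the ``$\fe_8$ theta-group dictionary'' and the fact that the singular locus of the projective dual of $B$ is the smooth surface $\rJ(C)$. In this paper that fact is Theorem~\ref{thm:coble-duality}, whose proof \emph{uses} Proposition~\ref{prop:gamma-stable}; likewise Proposition~\ref{prop:proj-dual} and Theorem~\ref{thm:w39-main}, which identify the dual of $D_\gamma$ with the cubic $Y_\gamma$ and its singular locus with $\rJ(C)$, take stability of $\gamma$ as a hypothesis. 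So the ``non-vanishing of the Vinberg discriminant'' is not something you can read off from the geometry of the Coble cubic without first knowing stability. The fallback argument for semistability is also not a proof: a reduced irreducible sextic branch divisor of a finite flat double cover with normal, locally factorial total space can still be an unstable point of $\Sym^6 W^*$ (instability is a statement about multiplicities along linear subspaces, which none of the cited properties of $\SU_3(C)$ controls), and in characteristic $2$ the branch divisor is the square of a cubic, so even reducedness fails. Finally, your finiteness-of-stabilizer step asserts that every automorphism of $\SU_3(C)$ preserving $\cO(1)$ is induced by an automorphism of $(C,P)$; that is a Torelli-type theorem that is nowhere established here (and in arbitrary characteristic is itself substantial). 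The paper deliberately avoids it by proving the weaker Lemma~\ref{lem:autSU3} directly, via the singular locus, the normalization by $\rU_2(C)$, its Albanese, and the Heisenberg-invariant locus of decomposable bundles.

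For contrast, the paper's proof does not go through the sextic at all. Finiteness of the stabilizer comes from Lemma~\ref{lem:autSU3} as above. Stability is then obtained by observing that $\gamma_{(C,P)}$ is Heisenberg-invariant, hence (when $\rJ(C)[3]\cong \bmu_3^2\times(\bZ/3)^2$) conjugate into the standard $9$-dimensional subspace $\fh\subset\bigwedge^3 V_9$, where the companion paper \cite{w39-paper} describes the non-stable locus explicitly: it has codimension $\le 1$ everywhere, is a union of hyperplanes in characteristic $0$, and hence over any field is the closure of the characteristic-$0$ non-stable locus; a lift to a complete DVR then shows any non-stable element of $\fh$ has infinite stabilizer. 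The remaining non-ordinary characteristic-$3$ curves are handled by a codimension count on the moduli stack. If you want to salvage your route, the missing ingredient is an independent proof that the branch sextic is a stable point of $\Sym^6 W^*$ (or that the relevant $\SL_9$-invariant is nonzero on $\gamma$) that does not pass through Theorem~\ref{thm:w39-main}; as written, that is precisely the content of the proposition being assumed.
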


\subsection{Proof of Proposition~\ref{prop:inverse1}}

\begin{lemma} \label{lem:lift-char2}
Let $C$ be a hyperelliptic curve over an algebraically closed field with hyperelliptic involution $\iota$, and let $L$ be a line bundle such that $\iota^*L\cong L$ and $\chi(L) = 0$. Then there is a lift $(\tilde{C},\tilde{L})$ of this pair to characteristic $0$ such that $\rh^0(\tilde{L}) = \rh^0(L)$.
\end{lemma}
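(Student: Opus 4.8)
The plan is to lift the pair $(C,\iota)$ together with the quotient map $\pi\colon C\to \bP^1=C/\iota$, and then to lift $L$ as an $\iota$-invariant divisor supported on the ramification points, computing $\rh^0$ on both fibers through $\bP^1$. Fix a complete DVR $R$ with residue field $\bk$ and fraction field $\bK$ of characteristic $0$. The first step is to produce a smooth proper $\cC/R$ with $\cC_\bk=C$ carrying an involution $\tilde\iota$ reducing to $\iota$ and with $\cC/\tilde\iota=\bP^1_R$; the ramification points $W_i$ of $\pi$ then spread out to disjoint sections $\tilde W_i\colon\Spec R\to\cC$. Granting such a lift, upper semicontinuity of $\rh^0$ in the flat family $\cC\to\Spec R$ already gives $\rh^0(\tilde L_\bK)\le\rh^0(L)$ for any line bundle restricting to $L$, so it remains to choose the lift so that the two numbers agree, which I will do by computing both.

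For the reduction, note that $\iota$ acts as $-1$ on $\rJ(C)$ (since $M\otimes\iota^*M=\pi^*\mathrm{Nm}(M)$ is trivial for $\deg M=0$), so the $\iota$-invariant degree-$0$ bundles are exactly $\rJ(C)[2]$. These are generated by the differences $\cO(W_i-W_j)$ in every characteristic; in characteristic $2$ this uses that the $2$-rank of $\rJ(C)$ is one less than the number of branch points, so that the $W_i-W_j$ already exhaust $\rJ(C)[2]$. Tensoring by the $\iota$-invariant bundle $\cO((g-1)W_1)$, every $\iota$-invariant $L$ with $\chi(L)=0$ is linearly equivalent to $\cO(\sum_{i\in T}W_i)\otimes\pi^*\cO(m)$ for a set $T$ of distinct ramification points with $|T|\le g+1$ and $m=(g-1-|T|)/2\ge-1$. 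I then set $\tilde L=\cO(\sum_{i\in T}\tilde W_i)\otimes\tilde\pi^*\cO(m)$, which is $\tilde\iota$-invariant of relative degree $g-1$ and reduces to $L$.

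Now I would compute $\rh^0$ through the double cover. One has $\rh^0(L)=\rh^0(\bP^1,\pi_*L)$, with $\pi_*L$ of rank $2$ and degree $\chi(L)-2=-2$. Because each $W_i$ is totally ramified of index $2$ (even in characteristic $2$), an $\iota$-invariant section is a pullback $\pi^*h$ of even order at every $W_i$, so the coefficient $1$ of $W_i$ in $\sum_{i\in T}W_i$ imposes no condition; hence the invariant sub-line-bundle is $(\pi_*L)^\iota\cong\cO_{\bP^1}(m)$, contributing $\max(m+1,0)$ invariant sections. Since the complementary quotient has degree $-2-m\le-1$, there are no further sections and $\rh^0(L)=\max(m+1,0)$. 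The identical computation applies verbatim to $\tilde L_\bK$ (the sections $\tilde W_i$ are again totally ramified of index $2$ on $\cC_\bK$), giving $\rh^0(\tilde L_\bK)=\max(m+1,0)=\rh^0(L)$.

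The main obstacle is characteristic $2$, and it enters twice. First, $\pi$ is then wildly (Artin--Schreier) ramified with fewer than $2g+2$ branch points, so the required lift of the $\bZ/2$-cover to a tame double cover in characteristic $0$ is a genuine instance of the local lifting problem; I would invoke the Oort--Sekiguchi--Suwa lifting of cyclic covers of order $p$ (for genus $2$ this is unnecessary, since any smooth lift of $C$ is automatically hyperelliptic and $\tilde\iota$ is then forced). Second, in characteristic $2$ the sheaf $\pi_*\cO_C$ is a nonsplit extension rather than $\cO\oplus\cO(-(g+1))$, so $\pi_*L$ need not split $\iota$-equivariantly and the involution may have Jordan blocks on $\rH^0(L)$; the step ``there are no further sections'' must then be justified by showing the torsion of the quotient $\pi_*L/(\pi_*L)^\iota$, supported over the branch points, does not lift to global sections. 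In the genus-$2$ case needed here this is automatic, since $\deg L=1$ forces $\rh^0(L)\le1=\max(m+1,0)$ outright; in general it follows from the arithmetic of the Weierstrass classes (no relation $W_i+W_j\sim 2W_k$ for distinct points), which bounds the effective $\iota$-invariant bundles as in characteristic $0$.
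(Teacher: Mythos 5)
Your overall route --- classify the $\iota$-invariant bundles of degree $g-1$ as $\cO(\sum_{i\in T}W_i)\otimes\pi^*\cO(m)$, lift the Weierstrass points, and compute $\rh^0$ on both fibers through $\pi_*$ --- is genuinely different from the paper's and is viable, but as written it has a gap exactly where the lemma has content, namely the upper bound $\rh^0(L)\le\max(m+1,0)$ on the characteristic $2$ special fiber. You correctly observe that the eigensheaf decomposition of $\pi_*L$ is unavailable there, but neither of your two patches closes the issue: for genus $2$ the claim that ``$\deg L=1$ forces $\rh^0(L)\le 1=\max(m+1,0)$'' is false in the case $|T|=3$, $m=-1$, where $\max(m+1,0)=0$ and you must actually rule out a section of $\cO(W_2+W_3-W_1)$; and for general genus the appeal to ``the arithmetic of the Weierstrass classes'' is not an argument --- the absence of a relation $W_i+W_j\sim 2W_k$ controls effectivity in low degree, not the dimension of the full linear system $|\sum_{i\in T}W_i+m\eta|$. (A smaller imprecision: in characteristic $2$ the branch divisor of $\tilde{\pi}$ is not \'etale over $R$, so the $W_i$ do not ``spread out to disjoint sections''; several characteristic-$0$ Weierstrass points collide onto each $W_i$, and you must choose, after a finite extension of $R$, one ramification section through each.)

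The gap is fillable, and in a way that makes your worry about torsion unnecessary: $(\pi_*L)^\iota=\ker(\iota-1)$ is the kernel of an $\cO_{\bP^1}$-linear endomorphism of a locally free sheaf, hence is automatically saturated, so the quotient $\pi_*L/(\pi_*L)^\iota$ is torsion-free of rank $1$ on $\bP^1$, i.e.\ a line bundle, of degree $-2-m\le-1$, and contributes no sections in any characteristic; and the local identification $(\pi_*L)^\iota\cong\cO(m)$ survives in characteristic $2$ because the cover is separable with ramification index $2$ at each $W_i$, so an invariant meromorphic section lies in $\bk((s))$ and has even valuation at $W_i$, hence cannot use the simple pole allowed by $W_i\in T$. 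With that observation, $\rh^0(L)=\max(m+1,0)$ holds on both fibers and you are done. For comparison, the paper avoids all of this machinery: it reads $d=\rh^0(L)-1$ off the splitting type $\pi_*L\cong\cO_{\bP^1}(d)\oplus\cO_{\bP^1}(-d-2)$ (available in every characteristic), twists by $\eta^{-d}$ to reduce to the case $\rh^0=1$, and there uses uniqueness of the effective divisor plus a replacement argument to force it to be a multiplicity-free sum of Weierstrass points, which visibly lift. That route needs neither the classification of $\iota$-invariant bundles nor the Oort--Sekiguchi--Suwa input you invoke to lift the cover when $g\ge 3$.
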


\begin{proof}
If $\rh^0(L)=0$, any lift of $L$ satisfies the condition.  Now, suppose $\rh^0(L)=1$.  Then $L$ is represented by a unique effective divisor, which must therefore be $\iota$-invariant. If the support of this divisor contains a non-Weierstrass point, or a Weierstrass point with multiplicity greater than $1$, then we can replace that subdivisor by any representative of the hyperelliptic class $\eta$, contradicting uniqueness. So $L$ is uniquely represented by a multiplicity-free sum of Weierstrass points; since Weierstrass points lift to characteristic $0$, it follows that $L$ lifts as well.

Now, suppose $d=\rh^0(L)-1>0$. Since $\pi \colon C\to \bP^1$ is flat and $0$-dimensional, $\pi_*L$ is a rank $2$ vector bundle on $\bP^1$, and has the same cohomology as $L$. Since $\chi(L) = 0$, we also have $\chi(\pi_* L) = 0$, so it follows that
\[
\pi_*L\cong \cO_{\bP^1}(d) \oplus \cO_{\bP^1}(-d-2).
\]
We then find immediately that $\eta^{-d}\otimes L$ is an $\iota$-invariant line bundle with $\rh^0(\eta^{-d}\otimes L)=1$, so admits a lift $\tilde{L}'$.  We then find that $\tilde{L} := \eta^d \otimes \tilde{L}'$ is a lift of $L$ with at least $d+1$ global sections, so precisely $d+1$ global sections by semicontinuity. 
\end{proof}

\begin{lemma} \label{lem:theta-char}
Let $\cE$ be a vector bundle on a hyperelliptic curve $C$ of genus $g$ with $\det \cE \cong \cO_C$. Then for any theta-characteristic $\theta$ {\rm (}i.e., $\theta^2 \cong \omega_C${\rm )}, we have
\[
\rh^0(C; \cE \otimes \iota^* \cE \otimes \theta)\ge \rank(\cE) \cdot \rh^0(C; \theta).
\]
\end{lemma}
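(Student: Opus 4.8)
The plan is to reduce to a computation on $\bP^1$ via the hyperelliptic double cover $\pi\colon C\to\bP^1$, after first reducing to a single generic bundle. Write $r=\rank(\cE)$, $k=\rh^0(C;\theta)$, and $\cF=\cE\otimes\iota^*\cE\otimes\theta$; note that $\chi(\cF)=0$, so the inequality really asserts that the theta-characteristic forces $rk$ ``extra'' sections beyond what $\chi$ predicts, and in particular multiplicative constructions are useless (for generic stable $\cE$ one even has $\rh^0(C;\cE\otimes\iota^*\cE)=0$). First I would assume $\bk$ algebraically closed. Next, since $\rh^0$ is upper semicontinuous in flat families and the stack of rank $r$ bundles with trivial determinant (the $\SL_r$-bundles on $C$) is irreducible, every such $\cE$ is a specialization of a generic bundle $\cE_0$; hence $\rh^0(C;\cF)\ge\rh^0(C;\cE_0\otimes\iota^*\cE_0\otimes\theta)$, and it suffices to prove the bound for one generic $\cE_0$. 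This is the extremal (minimal) case, but its advantage is that the direct images of $\cE_0$ under $\pi$ are as balanced as possible.

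The reduction to $\bP^1$ uses the fundamental sequence of the double cover. Writing $\pi_*\cO_C=\cO_{\bP^1}\oplus M$ with $M=\cO_{\bP^1}(-(g+1))$, one has for every bundle on $C$ a natural exact sequence $0\to\iota^*\cE\otimes\pi^*M\to\pi^*\pi_*\cE\to\cE\to0$. Tensoring with $\cE\otimes\theta$ and applying $\pi_*$ (exact, as $\pi$ is finite), together with the projection formula, yields
\[
0\to\pi_*\cF\otimes M\to\pi_*(\cE\otimes\theta)\otimes\pi_*\cE\xrightarrow{\ \mu\ }\pi_*(\cE^{\otimes2}\otimes\theta)\to0,
\]
where $\mu$ is the multiplication map. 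Thus $\pi_*\cF\cong(\ker\mu)\otimes M^{-1}$, and since $\rh^0(C;\cF)=\rh^0(\bP^1;\pi_*\cF)$ and $\rh^0(C;\theta)=\rh^0(\bP^1;\pi_*\theta)$, the whole statement becomes one about vector bundles on $\bP^1$, where everything splits into line bundles.

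For generic $\cE_0$ the sheaves $\pi_*\cE_0$ and $\pi_*(\cE_0\otimes\theta)$ are balanced, and their splitting types are pinned down by the degree formula $\deg\pi_*\cG=\deg\cG-\rank(\cG)(g+1)$: in particular $\deg\pi_*(\cE_0\otimes\theta)=-2r$ with $\chi=0$, so $\pi_*(\cE_0\otimes\theta)\cong\cO_{\bP^1}(-1)^{\oplus2r}$, while $\pi_*\theta=\cO(b_1)\oplus\cO(b_2)$ with $b_1+b_2=-2$ computes $k$. It then remains to compute $\rh^0\big((\ker\mu)\otimes M^{-1}\big)$ and check that it is at least $rk$. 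I expect the main obstacle to be exactly this last step: controlling the rank and cokernel of $\mu$ on $\bP^1$ well enough to determine the splitting type of $\ker\mu$, and doing so uniformly enough to extract the bound $rk$ rather than a weaker one. A secondary difficulty is the theta-characteristic in characteristic $2$, where the balanced-splitting input and the computation of $\rh^0(\theta)$ (equivalently of $\pi_*\theta$) are most delicate; there one would lift $(C,\theta)$ to characteristic $0$ preserving $\rh^0(\theta)$ using Lemma~\ref{lem:lift-char2}, reducing the splitting computations to the characteristic-$0$ situation.
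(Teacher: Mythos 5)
Your reduction to $\bP^1$ is set up correctly (the sequence $0\to\iota^*\cE\otimes\pi^*M\to\pi^*\pi_*\cE\to\cE\to0$ and the projection-formula computation giving $\pi_*\cF\cong(\ker\mu)\otimes M^{-1}$ are fine), but the step you flag as "the main obstacle" is in fact the entire content of the lemma, and your framework has no way to reach it. The point is that every numerical invariant you have assembled is independent of $k=\rh^0(C;\theta)$: $\deg\pi_*\cF=-2r^2$ with $\chi(\pi_*\cF)=0$, and for generic $\cE_0$ one indeed has $\pi_*(\cE_0\otimes\theta)\cong\cO_{\bP^1}(-1)^{\oplus 2r}$ --- the same answer whether $k=0$ or $k>0$. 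If $\ker\mu$ were balanced, $(\ker\mu)\otimes M^{-1}$ would be $\cO_{\bP^1}(-1)^{\oplus 2r^2}$ with no sections at all; so the assertion $\rh^0\ge rk$ is precisely the assertion that $\mu$ is special in a way that depends on $k$, and neither genericity nor degree counts can detect this. Worse, your semicontinuity reduction points the wrong way in spirit: the generic $\cE_0$ is exactly the case where the $rk$ sections are hardest to find, since generically $\rh^0(\cE_0\otimes\theta)=0$ and the forced sections of $\cE_0\otimes\iota^*\cE_0\otimes\theta$ come from special structure of the tensor square, not from sections of the factors.

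The missing idea is the $\iota$-equivariant structure. The sheaf $\cE\otimes\iota^*\cE\otimes\theta$ carries a natural $\iota$-linearization, and at each Weierstrass point the fiber of $\cE\otimes\iota^*\cE\cong\cE_P\otimes\cE_P$ splits into symmetric and antisymmetric parts (of ranks $r(r+1)/2$ and $r(r-1)/2$); twisting by $\theta$ swaps the signs of these eigenspaces at exactly $g+1-\rh^0(\theta)$ of the $g+1$ Weierstrass points. The paper takes the largest equivariant subsheaf $\pi^*\cV$ descending to $\bP^1$ and computes $\deg\cV=-r^2+\rh^0(\theta)r$, whence $\chi(\cV)=\rh^0(\theta)\,r$ by Riemann--Roch on $\bP^1$; since $\rh^0\ge\chi$ on $\bP^1$, this subsheaf already carries the required $rk$ sections, with no need to determine any splitting type. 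This is where the $k$-dependence enters --- through the parity of the eigenspace swap at Weierstrass points --- and it is invisible to the multiplication map $\mu$ as you have set it up. Your characteristic-$2$ fallback via Lemma~\ref{lem:lift-char2} and semicontinuity matches the paper's, but it only helps once the characteristic-$0$ case is actually proved.
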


\begin{proof}
First assume that the characteristic is different from $2$. Note that $\cE \otimes \iota^*\cE \otimes \theta$ has the structure of an $\iota$-equivariant sheaf (in two ways, depending on a choice of $\iota$-equivariant structure on $\theta$). The eigenspaces on the fibers over Weierstrass points of the natural equivariant structure on $\cE \otimes \iota^* \cE$ are the symmetric and anti-symmetric forms, and this will remain true after twisting by $\theta$, except that the eigenspaces will swap at $g+1-\rh^0(C; \theta)$ Weierstrass points (for one of the two equivariant structures on $\theta$).

Let $\pi \colon C \to \bP^1$ be the double covering map and let $\pi^* \cV$ be the largest equivariant subsheaf of $\cE \otimes\iota^* \cE \otimes \theta$ that descends to $\bP^1$; $\pi^* \cV$ is the kernel of the map to the $-1$ eigenspaces of the fibers at the Weierstrass points. Since $\chi(\cE \otimes \iota^*\cE \otimes \theta)=0$, we have
\begin{align*}
\chi(\pi^*V)
&= -(g+1-2 \rh^0(C; \theta)) \frac{r(r+1)}{2} -(g+1+2 \rh^0(C; \theta)) \frac{r(r-1)}{2}\\
&= -(g+1)r^2+2 \rh^0(C;\theta)r,
\end{align*}
and thus
\[
\deg(\cV)=\deg(\pi^*\cV)/2 = -r^2+\rh^0(C; \theta) r.
\]
But then Riemann--Roch gives $\chi(\cV)=\rh^0(C; \theta)r$, so that $\cV$, thus $\pi^*\cV$, has at least that many global sections.

Now we handle the case that $\bk$ has characteristic $2$. We may as well assume that $\bk$ is algebraically closed. By Lemma~\ref{lem:lift-char2}, we can lift $(C,\theta)$ to a pair $(\tilde{C}, \tilde{\theta})$ in characteristic $0$ satisfying $\rh^0(\theta) = \rh^0(\tilde{\theta})$. Let $\tilde{\cE}$ be any lift of $\cE$ to $\tilde{C}$. Then 
\[
\rh^0(C; \cE \otimes \iota^* \cE \otimes \theta) \ge \rh^0(\tilde{C}; \tilde{\cE} \otimes \iota^* \tilde{\cE} \otimes \tilde{\theta}) \ge \rank(\tilde{\cE}) \cdot \rh^0(\tilde{C}; \tilde{\theta}) = \rank(\cE) \cdot \rh^0(C; \theta). \qedhere
\]
\end{proof}

\begin{corollary} \label{cor:restriction}
If $\cE$ is a stable rank $3$ vector bundle on $C$ with trivial determinant, then $\pgl(\cE) \otimes \cO(P)$ has a nonzero global section if and only if $\cE \cong \iota^* \cE^*$. 
\end{corollary}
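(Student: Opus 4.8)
Since $P$ is a Weierstrass point, $2P$ is linearly equivalent to the hyperelliptic class, so $\omega_C\cong\cO(2P)$ and $\cO(P)$ is a theta-characteristic; moreover $\iota(P)=P$ gives $\iota^*\cO(P)\cong\cO(P)$, and $\rh^0(C;\cO(P))=1=\rh^1(C;\cO(P))$ because a genus $2$ curve carries no $g^1_1$. Since $\pgl(\cE)\otimes\cO(P)$ has rank $8$ and degree $8$ on a genus $2$ curve, its Euler characteristic is $0$, so $\rh^0=\rh^1$ and one expects the vanishing locus to be a divisor. From the scalar exact sequence $0\to\cO_C\to\fgl(\cE)\to\pgl(\cE)\to0$ twisted by $\cO(P)$, together with $\rh^0(\cO(P))=1$, one reads off in every characteristic that $\rh^0(\pgl(\cE)\otimes\cO(P))\ge\rh^0(\fgl(\cE)\otimes\cO(P))-1$ (with equality when $3\ne0$, where $\fgl(\cE)=\pgl(\cE)\oplus\cO_C$).

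First I would prove the implication $(\Leftarrow)$, which is clean and uniform in the characteristic. If $\cE\cong\iota^*\cE^*$ then $\iota^*\cE\cong\cE^*$, so $\cE\otimes\iota^*\cE\cong\fgl(\cE)$. Applying Lemma~\ref{lem:theta-char} with the theta-characteristic $\cO(P)$ gives
\[
\rh^0(C;\fgl(\cE)\otimes\cO(P))=\rh^0(C;\cE\otimes\iota^*\cE\otimes\cO(P))\ge\rank(\cE)\cdot\rh^0(C;\cO(P))=3,
\]
and hence $\rh^0(\pgl(\cE)\otimes\cO(P))\ge 3-1=2>0$ by the previous paragraph.

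For the converse $(\Rightarrow)$ I would argue globally rather than bundle-by-bundle. Let $\sigma$ be the involution $\cE\mapsto\iota^*\cE^*$ of $\SU_3(C)$. By Proposition~\ref{prop:involution} and Proposition~\ref{prop:surj-deg2}, $\theta$ is a finite flat degree $2$ morphism with $\theta\circ\sigma=\theta$ and $\sigma\ne\mathrm{id}$, so $\sigma$ is the deck involution of $\theta$ and (away from characteristic $2$) the ramification locus of $\theta$ is exactly the fixed locus $F=\{\cE\mid\cE\cong\iota^*\cE^*\}$. The plan is to identify the section locus $B=\{\cE\mid\rh^0(\pgl(\cE)\otimes\cO(P))\ne0\}$ with this ramification locus. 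The cleanest rigorous route is a divisor-class comparison on the stable locus $\SU^{\rm s}_3(C)$ (whose complement has codimension $\ge3$, so divisor classes extend): by Proposition~\ref{prop:picard-group} one has $K_{\SU_3(C)}=\cO_{\SU_3(C)}(-6)$ and $\theta^*K_{|3\Theta|}=\cO_{\SU_3(C)}(-9)$, so the ramification divisor $F$ lies in the class $\cO_{\SU_3(C)}(3)$; on the other hand $B$ is the zero locus of the canonical section of the determinant-of-cohomology line bundle of the relative adjoint bundle $\pgl(\cU)\otimes\cO(P)$ on $\SU_3(C)\times C$, whose first Chern class I would compute by Grothendieck--Riemann--Roch and expect to equal $\cO_{\SU_3(C)}(3)$ as well. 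Since $F\subseteq B$ by $(\Leftarrow)$ and both would then be effective divisors in the same class, $B=F$ follows, giving $(\Rightarrow)$.

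The main obstacle is this Chern-class computation for $B$, and the companion fact that $B$ is an honest divisor, i.e.\ generic vanishing of $\rh^0(\pgl(\cE)\otimes\cO(P))$. In characteristic $\ne2$ the latter is automatic because $\theta$ is then generically \'etale, so its differential $d\theta$ is generically an isomorphism; conceptually I expect $B$ to be precisely the degeneracy locus of $d\theta$, with the half-twist $\cO(P)$ entering through Serre duality and the role of the Weierstrass point $P$ in the construction of $\Theta$, which would simultaneously reprove $(\Leftarrow)$ and pin down the class of $B$. The characteristic $2$ case, where $\theta$ may fail to be generically \'etale, would be handled separately along the lines of Remark~\ref{rmk:char2}, using the reduction to characteristic $0$ afforded by Lemmas~\ref{lem:lift-char2} and~\ref{lem:theta-char}.
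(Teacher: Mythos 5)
Your $(\Leftarrow)$ direction is essentially the paper's: apply Lemma~\ref{lem:theta-char} with the theta-characteristic $\cO(P)$, use $\fgl(\cE)\cong\cE\otimes\iota^*\cE$ when $\cE\cong\iota^*\cE^*$, and subtract the scalar contribution to get $\rh^0(\pgl(\cE)\otimes\cO(P))\ge 2$. The global strategy for $(\Rightarrow)$ is also the right one (compare the determinantal locus $B$ with the ramification divisor $F$ using $\Pic(\SU_3(C))\cong\bZ$), but it has a genuine numerical gap at its punchline.

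The gap is the expected class of $B$. The natural scheme structure on $B$ comes from $\det(\alpha)$, where $\alpha$ is the connecting map in the pushforward of
\[
0 \to \fsl(\sE) \to \fsl(\sE)\otimes p^*\cO(P) \to \fsl(\sE)\otimes p^*\cO(P)|_P \to 0,
\]
namely $\alpha\colon \cF=q_*(\fsl(\sE)\otimes p^*\cO(P)|_P)\to \cG=\rR^1q_*(\fsl(\sE))$. Here $\cG$ is the tangent bundle, so $\det\cG=\cO_{\SU_3^{\rm s}(C)}(6)$ by Proposition~\ref{prop:picard-group}, while $\det\cF$ is trivial; hence $\det(\alpha)$ is a section of $\cO(6)$, not of $\cO(3)$ as you predict. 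Consequently ``$F\subseteq B$ and both are effective in the same class, so $B=F$'' does not go through: $B-F$ is effective of class $\cO(3)$, which is not forced to vanish. The repair --- which is exactly what the paper does, and which you already have the ingredients for --- is to observe that your $(\Leftarrow)$ computation gives $\rh^0(\fsl(\cE)\otimes\cO(P))\ge 2$ along $F$, i.e.\ $\alpha$ drops rank by at least $2$ there, so $\det(\alpha)$ vanishes to order $\ge 2$ along $F$. Then $B\ge 2F$ as divisors with $2F$ already of class $\cO(6)$, forcing $B=2F$ and hence $B=F$ set-theoretically. Relatedly, your heuristic that $B$ is ``the degeneracy locus of $d\theta$'' conflates the reduced ramification locus (class $\cO(3)$) with the determinantal scheme $B$ (class $\cO(6)$); they agree as sets but not as divisors, and that factor of $2$ is precisely the content of the missing step.
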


\begin{proof}
Note that $\chi(\pgl(\cE) \otimes \cO(P)) = 0$, so by Serre duality and Riemann--Roch, $\pgl(\cE) \otimes \cO(P)$ has a nonzero global section if and only if the same is true for $\fsl(\cE) \otimes \cO(P)$, and we will work with the latter condition. Since $\cE$ is stable, $\rh^0(C; \fgl(\cE)) = 1$, and $\rH^0(C; \fgl(\cE))$ is spanned by the identity map $\cE \to \cE$. In particular, this has nonzero image under the trace map $\fgl(\cE) \to \cO_C$, so $\rH^0(C; \fsl(\cE)) = 0$.

Let $\sE$ be a universal sheaf on $C \times \SU^{\rm s}_3(C)$ with projection maps $p,q$. Apply $q_*$ to
\[
0 \to \fsl(\sE) \to \fsl(\sE) \otimes p^*\cO(P) \to \fsl(\sE) \otimes p^*\cO(P)|_P \to 0
\]
to get the $4$-term exact sequence ($q_* \fsl(\sE) = 0$ by what we just argued)
\begin{align*}
0 \to q_*(\fsl(\sE) \otimes p^*\cO(P)) \to q_*(\fsl(\sE) \otimes p^* \cO(P)|_P) \xrightarrow{\alpha} \rR^1 q_*(\fsl(\sE)) \qquad \qquad \\
 \to \rR^1 q_*(\fsl(\sE) \otimes p^*\cO(P)) \to 0.
\end{align*}
The locus of interest, that is, stable bundles $\cE$ where $\rH^0(C; \fsl(\cE) \otimes \cO(P)) \ne 0$, is set-theoretically defined by $\det(\alpha)$. Let $\cF$ and $\cG$ denote the source and target of $\alpha$, respectively. So $\det(\alpha)$ is a section of $\det(\cG) \otimes \det(\cF^*)$. Since $\cG = \cT_{\SU_3^{\rm s}(C)}$, its determinant is $\cO_{\SU^{\rm s}_3(C)}(6)$. 

We claim that $\det(\cF) = \cO_{\SU^{\rm s}_3(C)}$. Define $s \colon \SU^{\rm s}_3(C)\to C \times \SU^{\rm s}_3(C)$ by $s(\cE) = (P, \cE)$. For any sheaf $\cM$ on $C \times \SU^{\rm s}_3(C)$, we have $q_*(\cM|_P) = s^* \cM$. Applying this to $\cM = \fsl(\sE) \otimes p^*\cO(P)$, we get
\[
\cF = q_*(\fsl(\sE)\otimes p^*\cO(P)|_{P}) = s^*(\fsl(\sE))\otimes s^*p^*\cO(P).
\]
Note that $s^* p^* \cO(P)$ is trivial, since $p^*\cO(P)\cong \cO(\im s)$ is the line bundle associated to a section of a product. Also, $s^*(\fsl(\sE))\cong \fsl(s^* \sE)$, and $\fsl$ of any vector bundle has trivial determinant. So $\det(\cF) = \cO_{\SU^{\rm s}_3(C)}$.

So $\alpha$ is a section of $\cO_{\SU^{\rm s}_3(C)}(6)$. We have an exact sequence
\[
0 \to \rH^0(C; \fsl(\cE) \otimes \cO(P)) \to \rH^0(C; \fgl(\cE) \otimes \cO(P)) \to \rH^0(C; \cO(P)).
\]
The third term has dimension $1$, and by Lemma~\ref{lem:theta-char} with $\eta = \cO(P)$, the second term has dimension $\ge 3$ when $\cE \cong \iota^* \cE^*$. So $\rh^0(C; \fsl(\cE) \otimes \cO(P)) \ge 2$ when $\cE \cong \iota^* \cE^*$. The ramification locus is branched along a sextic in $\bP^8$, so it is a zero section of $\cO_{\SU^{\rm s}_3(C)}(3)$. So $\det(\alpha)$ contains the ramification locus with either multiplicity $1$ or $2$. But on the ramification locus, the rank of $\alpha$ drops by at least $2$, so the multiplicity is at least $2$. So we conclude that $\rH^0(C; \fsl(\cE) \otimes \cO(P)) \ne 0$ if and only if $\cE \cong \iota^* \cE^*$.
\end{proof}

\begin{proof}[{Proof of Proposition~\ref{prop:inverse1}}]
Let $\cE$ be a rank $3$ stable bundle with trivial determinant. The form $\gamma_\cE$ may be defined as the composition of either of two maps in the following commutative square:
\[
\xymatrix{ \bigwedge^3 \rH^0(C; \pgl(\cE) \otimes \omega_C) \ar[r] \ar[d] & \rH^0(C; \omega_C^{\otimes 3}) \ar[d] \\
\bigwedge^3 \rH^0(P; \pgl(\cE) \otimes \omega_C) \ar[r] & \rH^0(P; \omega_C^{\otimes 3}) }.
\]
In particular, $\gamma_\cE$ has nonzero hyperdiscriminant if and only if the restriction 
\[
\rH^0(C; \pgl(\cE) \otimes \omega_C) \to \rH^0(P; \pgl(\cE) \otimes \omega_C)
\]
is an isomorphism. By considering the short exact sequence
\[
0 \to \pgl(\cE) \otimes \cO(P) \to \pgl(\cE) \otimes \omega_C \to (\pgl(\cE) \otimes \omega_C)|_P \to 0,
\]
we see that $\rH^0(C; \pgl(\cE) \otimes \omega_C) \to \rH^0(P; \pgl(\cE) \otimes \omega_C)$ is an isomorphism if and only if $\rH^0(C; \pgl(\cE) \otimes \cO(P)) = 0$ (since $\chi(\pgl(\cE) \otimes \cO(P))=0$). By Corollary~\ref{cor:restriction}, this is equivalent to $\cE \not\cong \iota^* \cE^*$. 

Putting this together, $\gamma_\cE$ has nonzero hyperdiscriminant if and only if $\cE \not\cong \iota^* \cE^*$. So by Proposition~\ref{prop:involution}, $D_\gamma$ and the branch locus of $\theta$ are the same as sets. Both of them are degree $6$ hypersurfaces, and we will show in Corollary~\ref{cor:irred} that $D_\gamma$ is irreducible. In particular, they must be equal as schemes. 
\end{proof}

\subsection{Proof of Proposition~\ref{prop:gamma-stable}}

\begin{lemma} \label{lem:U2-normal}
The map $\phi \colon \rU_2(C)\to \SU_3(C)$ given by $E\mapsto E\oplus \det(E)^{-1}$ is the normalization of its image, which is the singular locus of $\SU_3(C)$.
\end{lemma}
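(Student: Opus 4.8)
The lemma claims that the map $\phi \colon \rU_2(C) \to \SU_3(C)$ given by $E \mapsto E \oplus \det(E)^{-1}$ is the normalization of its image, which equals the singular locus of $\SU_3(C)$.

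Let me understand the setup. Here $\rU_2(C)$ should denote the moduli space of rank 2 semistable bundles on $C$ (with varying determinant, since we're taking $\det(E)^{-1}$ and forming a rank 3 bundle with trivial determinant). For $E \oplus \det(E)^{-1}$ to have trivial determinant, we need $\det(E \oplus \det(E)^{-1}) = \det(E) \otimes \det(E)^{-1} = \cO_C$, which checks out.

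So $\phi$ takes a rank 2 bundle $E$ to the rank 3 bundle $E \oplus \det(E)^{-1}$, which is a decomposable (strictly semistable) bundle with trivial determinant.

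**Key facts I need:**

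1. The image consists of strictly semistable bundles (decomposable ones of this form), so they lie in the singular/non-stable locus.

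2. There's a general principle: singular points of $\SU_r(C)$ correspond to strictly semistable (S-equivalence classes containing properly semistable) bundles.

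3. The singular locus of $\SU_3(C)$: By general theory (Narasimhan–Ramanan type results), the singular locus of $\SU_r(C)$ consists of S-equivalence classes of non-stable (i.e., strictly semistable) bundles.

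**Proof strategy:**

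*Step 1: Identify the singular locus as strictly semistable bundles.*

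By Proposition~\ref{prop:picard-group}, $\SU_3(C)$ is normal and locally factorial. The smooth locus contains the stable locus $\SU_3^s(C)$. The complement (strictly semistable bundles) is where singularities can occur. I'd need to show the singular locus is exactly the image of $\phi$.

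*Step 2: Understand S-equivalence classes in the image.*

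A strictly semistable rank 3 bundle with trivial determinant has a Jordan–Hölder filtration with semistable quotients of the same slope 0. The possible types:
- $L_1 \oplus L_2 \oplus L_3$ (three line bundles of degree 0 with product $\cO_C$)
- $E \oplus L$ where $E$ is rank 2 semistable of degree 0 and $L$ is a line bundle of degree 0 with $\det(E) \otimes L = \cO_C$, i.e., $L = \det(E)^{-1}$.

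The second type is exactly the image of $\phi$! And since S-equivalence identifies a bundle with the direct sum of its graded pieces, the image of $\phi$ captures all strictly semistable S-equivalence classes (with $E$ semistable rank 2, possibly itself decomposing into line bundles, which recovers the first type as a degeneration).

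*Step 3: $\phi$ is the normalization.*

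- $\rU_2(C)$ is smooth (or at least normal) — the moduli space of semistable rank 2 bundles. Actually I should be careful: $\rU_2(C)$ with varying determinant. Its normality/smoothness properties are standard.
- $\phi$ is injective on stable rank 2 bundles $E$ (since $E \oplus \det(E)^{-1}$ determines $E$ as the unique rank 2 semistable piece).
- The failure of injectivity or the identifications happen over strictly semistable $E$, but these get resolved.

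The map $\phi$ is finite and birational onto its image. Since $\rU_2(C)$ is normal, $\phi$ factors through the normalization. To show it *is* the normalization, I verify $\phi$ is birational onto its image and $\rU_2(C)$ is normal.

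**What I'd expect the author's proof to invoke:**

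- A universal/deformation-theoretic computation or reference (e.g., to Narasimhan–Ramanan, or the cited Balaji–Mehta / Hoffmann papers) identifying the singular locus.
- The fact that the deformation theory of $E \oplus \det(E)^{-1}$ has obstructions precisely making these points singular.

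**Concern about notation:** The paper uses $\rU_2(C)$ without prior definition in the excerpt (it defines $\SU_2(C;\cL)$ and $\SU_3(C)$). Presumably $\rU_2(C)$ is the moduli of rank 2 semistable bundles with *arbitrary* determinant (degree 0), which would have dimension $4g - 3 = 5$... let me check: $\dim = r^2(g-1) + 1 = 4 \cdot 1 + 1 = 5$. And $\SU_3(C)$ has dimension $(r^2-1)(g-1) = 8$. A 5-dimensional image inside an 8-fold as the singular locus is plausible (codimension 3), matching the earlier statement that "the complement of the stable locus has codimension 3."

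That codimension-3 remark strongly confirms: the singular locus = non-stable locus = image of $\phi$, with codimension 3.

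**Summary of my proof plan:**

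1. Recall $\dim \rU_2(C) = 5$ and $\dim \SU_3(C) = 8$; the image has codimension 3, matching the non-stable locus.
2. Show $\phi$ is a closed immersion on the stable-$E$ locus and finite/birational onto its image.
3. Identify the image with the strictly semistable locus via S-equivalence (Jordan–Hölder).
4. Cite/prove that the singular locus equals the strictly semistable locus (this is the crux — likely using local factoriality from Prop~\ref{prop:picard-group} plus a local deformation argument, or a citation to Narasimhan–Ramanan / the structure of these moduli spaces).
5. Conclude $\rU_2(C)$, being normal, maps as the normalization.

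**The main obstacle:** Step 4 — rigorously identifying the *singular* locus (scheme-theoretically) with the image of $\phi$, as opposed to just the *non-stable* locus set-theoretically. This requires either a deformation-theoretic local analysis or invoking the cited structural results. The normality of $\rU_2(C)$ and birationality of $\phi$ onto its image are comparatively routine.
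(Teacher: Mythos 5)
Your outline matches the paper's strategy for most of the lemma: the paper likewise observes that $\phi$ is quasi-finite and projective (hence finite) and birational onto its image, so that normality of $\rU_2(C)$ makes it the normalization, and it likewise identifies the image of $\phi$ with the complement of the stable locus via Jordan--H\"older factors. Those parts of your plan are fine and essentially the same as what is written.

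The genuine gap is your Step 4, which you correctly flag as ``the crux'' but do not actually carry out. Showing that every non-stable point is \emph{singular} is not a formal consequence of strict semistability --- indeed, for the same genus $2$ curve one has $\SU_2(C)\cong\bP^3$, which is smooth even though it contains strictly semistable points, so ``singular locus $=$ strictly semistable locus'' genuinely requires a rank-$3$-specific argument and cannot be waved at via general principles or the S-equivalence structure alone. The paper's resolution is concrete and short: since the stable locus is smooth, it suffices to show that the \emph{generic} point of the image of $\phi$ has larger-than-expected tangent space; this is known in characteristic $0$, and the tangent space dimension is upper semicontinuous in a family over a DVR lifting $C$ to characteristic $0$, so the inequality persists in positive characteristic. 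Your proposal names two possible routes (a direct deformation computation or a citation) but commits to neither, so as written the proof is incomplete at precisely the step that carries the content of the lemma; you would need to either supply the semicontinuity argument from characteristic $0$ or perform the local deformation-theoretic computation at a point $E\oplus\det(E)^{-1}$ with $E$ stable.
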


\begin{proof}
As a morphism to its image, $\phi$ is quasi-finite and projective, and hence finite. Since $\phi$ is also birational, it is the normalization map.

The stable locus of $\SU_3(C)$ is smooth, and the image of $\phi$ is precisely the complement of the stable locus. Showing that the singular locus is all of the image reduces to showing that the generic point in the image of $\phi$ has larger than expected tangent space; this follows by semicontinuity from characteristic $0$.
\end{proof}

\begin{lemma} \label{lem:alb}
The determinant map $\det \colon \rU_2(C) \to \rJ(C)$ induces an isomorphism of schemes $\Alb^1(\rU_2(C)) \to \rJ(C)$.
\end{lemma}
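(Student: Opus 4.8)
The plan is to realize $\rJ(C)$, together with the morphism $\det$, as an Albanese torsor for $\rU_2(C)$, and then to invoke the universal property. By definition of the Albanese torsor there is a canonical morphism $a\colon \rU_2(C)\to \Alb^1(\rU_2(C))$, and since $\rJ(C)$ is a torsor under an abelian variety, $\det$ factors uniquely as $\det = \overline{\det}\circ a$ for a morphism $\overline{\det}\colon \Alb^1(\rU_2(C))\to \rJ(C)$; this $\overline{\det}$ is the induced map of the statement. To prove it is an isomorphism it suffices to show that the pair $(\rJ(C),\det)$ \emph{itself} satisfies the universal property of the Albanese torsor, i.e.\ that every morphism from $\rU_2(C)$ to a torsor under an abelian variety factors uniquely through $\det$. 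Two objects satisfying the same universal property are canonically isomorphic, and the resulting isomorphism is compatible with the maps from $\rU_2(C)$.

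The verification rests on two geometric inputs about $\det\colon \rU_2(C)\to \rJ(C)$. First, $\det$ is proper and surjective (for $\cL\in \rJ(C)$ the semistable bundle $\cO_C\oplus\cL$ has determinant $\cL$), and, because $\rU_2(C)$ is normal by Lemma~\ref{lem:U2-normal} and $\rJ(C)$ is smooth, $\det$ has geometrically connected fibers with $\det_*\cO_{\rU_2(C)} = \cO_{\rJ(C)}$ (that is, $\rJ(C)$ is the Stein factorization of $\det$). Second, the geometric fibers of $\det$ are the moduli spaces $\SU_2(C;\cL)$ of rank $2$ semistable bundles of fixed degree-$0$ determinant, which in genus $2$ are isomorphic to $\bP^3$ (Narasimhan--Ramanan), and in particular are rationally connected. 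Now let $h\colon \rU_2(C)\to T$ be any morphism to a torsor $T$ under an abelian variety $A$. After base change to $\ol{\bk}$ the torsor $T$ acquires a point and becomes an abelian variety, while the geometric fibers of $\det$ remain rationally connected; since any morphism from a rationally connected variety to an abelian variety is constant, $h$ is constant on the geometric fibers of $\det$. As $\det_*\cO_{\rU_2(C)}=\cO_{\rJ(C)}$, this constancy forces $h$ to factor as a morphism $h = \bar h\circ\det$, uniquely since $\det$ is surjective. Thus $(\rJ(C),\det)$ is initial among torsor-valued morphisms, which is precisely the universal property of $\Alb^1(\rU_2(C))$, and therefore $\overline{\det}$ is an isomorphism.

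The main obstacle is the descent of the rational-connectedness input to an arbitrary base field: one must check that the geometric fibers of $\det$ really are rationally connected in every characteristic (including $2$), so that the ``constant on fibers'' step applies, and that the set-theoretic factorization through the fibers of $\det$ promotes to a genuine factorization of morphisms. The latter is exactly what $\det_*\cO_{\rU_2(C)}=\cO_{\rJ(C)}$ provides, so the crux is establishing this identity together with the normality and geometric connectedness of $\rU_2(C)$. Once these structural facts about the fibration $\det$ are in hand, the Albanese computation is formal.
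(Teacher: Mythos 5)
Your proof is correct in outline but takes a genuinely different route from the paper's. The paper uses the universal property only once, to factor $\det$ as $q\circ a$ with $a\colon \rU_2(C)\to \Alb^1(\rU_2(C))$, and then shows $q$ is an isomorphism by identifying $q^{-1}(0)$, a torsor under a subgroup scheme of $\Alb^0(\rU_2(C))$, with the image of the single fiber $\det^{-1}(0)=\SU_2(C)\cong \bP^3$, which is a reduced point because $\Alb^1(\bP^3)$ is a point. You instead verify directly that $(\rJ(C),\det)$ satisfies the Albanese universal property, using that \emph{every} geometric fiber $\SU_2(C;\cL)\cong\bP^3$ is rationally connected (so any torsor-valued map is constant on fibers) together with $\det_*\cO_{\rU_2(C)}=\cO_{\rJ(C)}$ to promote set-theoretic constancy to an honest factorization. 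Both arguments ultimately rest on the same geometric input (the fibers are $\bP^3$'s, valid in all characteristics), but yours trades the paper's torsor/kernel bookkeeping for a Stein-factorization argument; arguably your version is more robust, since the paper's identification of $q^{-1}(0)$ with the image of $\SU_2(C)$ itself implicitly needs a rigidity statement of the kind you make explicit. Two small corrections: geometric connectedness of the fibers is not a consequence of normality of $\rU_2(C)$ and smoothness of $\rJ(C)$ as you suggest, but an input (the fibers are irreducible because they are $\bP^3$'s, or more intrinsically because they are GIT quotients of irreducible Quot-scheme loci); given that, $\det_*\cO_{\rU_2(C)}=\cO_{\rJ(C)}$ follows from Stein factorization because the generic fiber is geometrically connected and geometrically reduced and $\rJ(C)$ is normal, so the finite part of the factorization is a finite birational map onto a normal variety. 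With those points repaired, your argument goes through.
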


\begin{proof}
Since $\det$ is a morphism to an abelian variety, it factors through the Albanese torsor, and it remains only to show that the induced morphism $\Alb^1(\rU_2(C))\to \rJ(C)$ is an isomorphism. The preimage of $0 \in \rJ(C)$ is a torsor over a subgroup scheme of $\Alb^0(\rU_2(C))$, so it remains to show that this torsor is a reduced point. The preimage of $0\in \rJ(C)$ in $\Alb^1(\rU_2(C))$ agrees with the image in $\Alb^1(\rU_2(C))$ of $\SU_2(C)$. Since $\SU_2(C)$ is a smooth proper variety, the map $\SU_2(C)\to \Alb^1(\rU_2(C))$ factors through $\Alb^1(\SU_2(C))$, and since $\SU_2(C)\cong \bP^3$, $\Alb^1(\SU_2(C))$ is a single point.  
\end{proof}

\begin{lemma} \label{lem:proj-frame}
If $X$ is a subvariety of $\bP^n$ such that none of its irreducible components is contained in a hyperplane, then any element of $\PGL_{n+1}$ that fixes $X$ pointwise is trivial.
\end{lemma}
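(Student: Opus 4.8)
The plan is to reduce to finding a projective frame (that is, $n+2$ points in general position) among the points of $X$, and then to invoke the classical fact that an element of $\PGL_{n+1}$ fixing such a frame must be trivial. Since both the hypothesis and the conclusion are insensitive to base change to the algebraic closure (an element of $\PGL_{n+1}(\bk)$ is trivial if and only if it is trivial over $\ol\bk$, and ``fixes $X$ pointwise'' passes to $X_{\ol\bk}$), I would first assume $\bk = \ol\bk$; the hypothesis is then read as: no geometric irreducible component of $X$ lies in a hyperplane. We may also assume $n \ge 1$, the case $n=0$ being trivial.

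Fix $g \in \PGL_{n+1}$ fixing $X$ pointwise, and let $Z$ be any irreducible component of $X$. By hypothesis $Z$ is not contained in any hyperplane, hence $Z$ spans $\bP^n$. First I would extract points $p_0, \dots, p_n \in Z$ whose homogeneous coordinate vectors form a basis of the underlying vector space: since $Z$ spans, one can pick them greedily, at each stage finding a point of $Z$ outside the linear span of those chosen so far (such a point exists precisely because $Z$ is not contained in that proper linear subspace). For the $(n+2)$-nd point, set $H_i = \mathrm{span}(p_0, \dots, \widehat{p_i}, \dots, p_n)$, a hyperplane, for $0 \le i \le n$. Since $Z$ is irreducible and contained in none of the $H_i$, the complement $Z \setminus \bigcup_{i=0}^n H_i$ is a nonempty open subset (here $\dim Z \ge 1$, as a spanning irreducible subvariety of $\bP^n$ with $n \ge 1$ cannot be a point); choose $p_{n+1}$ there. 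By construction $p_0, \dots, p_{n+1}$ is a projective frame lying on $Z \subseteq X$.

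It remains to show the stabilizer of this frame in $\PGL_{n+1}$ is trivial. Lifting to a basis $v_0, \dots, v_n$ of homogeneous vectors for $p_0, \dots, p_n$ and writing a lift of $p_{n+1}$ as $\sum_i c_i v_i$, the frame condition forces every $c_i \ne 0$, so after rescaling the $v_i$ we may take $p_{n+1} = [\sum_i v_i]$. A representative $A \in \GL_{n+1}$ of $g$ fixes each $[v_i]$, hence is diagonal in this basis, say $A v_i = \mu_i v_i$; fixing $[\sum_i v_i]$ then forces all the $\mu_i$ equal, so $A$ is scalar and $g = 1$. I expect the only real content to be the construction of the $(n+2)$-nd point: this is exactly where irreducibility of $Z$ together with ``not contained in a hyperplane'' is used, and it is what rules out the degenerate configurations (over $\ol\bk$, a reducible or hyperplane-contained $X$ can be fixed pointwise by a nontrivial $g$ whose fixed locus is a union of projectivized eigenspaces). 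A parallel and slightly slicker route replaces the frame argument by this eigenspace description directly: a nonscalar lift $A$ of $g$ has $\mathrm{Fix}(g) = \bigsqcup_\lambda \bP(\ker(A - \lambda))$, a disjoint union of proper linear subspaces, so each connected component of $X$ would be forced into one of them and hence into a hyperplane, contradicting the hypothesis.
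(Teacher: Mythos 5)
Your proposal is correct and follows essentially the same route as the paper: build a projective frame of $n+2$ points on $X$ (the paper picks $n+1$ spanning points, forms the $n+1$ coordinate hyperplanes, and uses the no-component-in-a-hyperplane hypothesis to find a last point off their union, exactly as you do within a single component $Z$), then observe that fixing a frame forces triviality in $\PGL_{n+1}$. Your extra care about passing to $\ol{\bk}$ and the alternative eigenspace argument are fine but not needed beyond what the paper does.
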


\begin{proof}
Pick $n+1$ points on $X$ which linearly span $\bP^n$. For each subset of $n$ points, take the hyperplane spanned by them. By our assumptions, $X$ is not contained in the union of these $n+1$ hyperplanes. So take a point in $X$ outside of the union of these hyperplanes. This collection of $n+2$ points gives a projective coordinate frame. Indeed, if we lift the vectors to $\bk^{n+1}$, then there is a unique linear dependency up to scalar among these points and none of the coefficients can be $0$ otherwise it would contradict the construction of the last point. So any element of $\PGL_{n+1}$ that fixes $X$ pointwise also fixes these $n+2$ points, and hence must be the identity.
\end{proof}

\begin{lemma} \label{lem:autSU3}
The automorphism group of $\SU_3(C)$ is finite.
\end{lemma}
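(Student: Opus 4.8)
The plan is to show that the identity component $\Aut^0(\SU_3(C))$ is trivial. Since $\SU_3(C)$ is projective with $\Pic(\SU_3(C))\cong\bZ$ generated by the ample class $\cO_{\SU_3(C)}(1)$ (Proposition~\ref{prop:picard-group}), every automorphism preserves this polarization, so $\Aut(\SU_3(C))$ is of finite type, and finiteness is equivalent to $\Aut^0(\SU_3(C))=1$. First I would linearize the action. Because $\cO_{\SU_3(C)}(1)=\theta^*\cO_{\bP^8}(1)$ and $\theta_*\cO_{\SU_3(C)}\cong\cO_{\bP^8}\oplus\cO_{\bP^8}(-3)$ (Proposition~\ref{prop:surj-deg2}), the space $\rH^0(\SU_3(C);\cO_{\SU_3(C)}(1))$ is $9$-dimensional and its complete linear system recovers $\theta$. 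Hence every automorphism acts on this space, giving a homomorphism $\rho\colon\Aut(\SU_3(C))\to\PGL_9$ for which $\theta$ is equivariant. The kernel of $\rho$ consists of automorphisms lying over $\bP^8$, i.e.\ deck transformations of the degree $2$ cover $\theta$ (generated by the involution $\cE\mapsto\iota^*\cE^*$ of Proposition~\ref{prop:involution}), so it has order at most $2$. In particular $H:=\Aut^0(\SU_3(C))$ injects into $\PGL_9$ and is therefore a connected affine algebraic group.

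Next I would exploit the singular locus. Any automorphism preserves $\mathrm{Sing}(\SU_3(C))$, which by Lemma~\ref{lem:U2-normal} is the image of $\phi\colon\rU_2(C)\to\SU_3(C)$ with $\phi$ its normalization; by the universal property of normalization the $H$-action lifts to $\rU_2(C)$. By Lemma~\ref{lem:alb} the determinant identifies $\Alb^1(\rU_2(C))$ with $\rJ(C)$, and a connected affine group admits no nonconstant morphism to an abelian variety, so $H$ acts trivially on $\rJ(C)$. Since the Albanese morphism is equivariant, $H$ then preserves every fibre of $\det\colon\rU_2(C)\to\rJ(C)$.

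It remains to control the action on these fibres, each of which is a copy of $\SU_2(C;\xi)\cong\bP^3$. Inside such a fibre the locus of non-stable bundles is a Kummer quartic surface, the $2$-to-$1$ image of $\rJ(C)$ under $\alpha\mapsto[\alpha\oplus\xi\alpha^{-1}]$, whose sixteen singular points form a torsor under $\rJ(C)[2]$. As $H$ preserves stability type, it preserves this quartic and hence its sixteen nodes; being connected, it fixes each node. Since these sixteen points contain a projective frame of $\bP^3$, the argument of Lemma~\ref{lem:proj-frame} forces $H$ to act trivially on each fibre, hence trivially on $\rU_2(C)$ and so on $\mathrm{Sing}(\SU_3(C))$. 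Finally, $\theta(\mathrm{Sing}(\SU_3(C)))$ is nondegenerate in $\bP^8$, so applying Lemma~\ref{lem:proj-frame} in $\bP^8$ gives $\rho(H)=1$ and therefore $H=1$, as desired. The main obstacle is precisely this fibrewise step: ruling out a positive-dimensional subgroup of $\PGL_4$ acting on the $\bP^3$-fibres. I expect the Kummer-node argument to be the cleanest route, reducing matters to the classical fact that the sixteen nodes lie in sufficiently general position to contain a frame; verifying that general-position statement, together with the nondegeneracy of $\theta(\mathrm{Sing}(\SU_3(C)))$ in $\bP^8$, is the only nontrivial geometric input.
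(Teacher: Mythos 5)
Your overall skeleton matches the paper's: restrict to the singular locus, pass to its normalization $\rU_2(C)$, kill the induced action on $\Alb^1(\rU_2(C))\cong\rJ(C)$ using affineness/connectedness, and finish with Lemma~\ref{lem:proj-frame} applied to the $\theta$-image of a suitable locus. The divergence is in the middle step, and that is where there is a genuine gap. You propose to kill the action fibrewise over $\rJ(C)$ by fixing the sixteen nodes of the Kummer quartic in each fibre $\SU_2(C;\xi)\cong\bP^3$. But those nodes are the images of the solutions of $\alpha^2\cong\xi$, a torsor under $\rJ(C)[2]$, and in characteristic $2$ this set has at most $4$ points (a single point when $\rJ(C)$ is supersingular). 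Fixing so few points of $\bP^3$ does not force a connected subgroup of $\PGL_4$ to be trivial, so the fibrewise step collapses exactly in characteristic $2$ --- which the lemma must cover, since it feeds into Proposition~\ref{prop:gamma-stable} in all characteristics. Even away from characteristic $2$ you would still owe the classical general-position fact about the $16_6$ configuration (that five of the nodes form a projective frame, equivalently that the sixteen nodes do not all lie in a union of linear subspaces of total dimension $4$); you correctly flag this as the remaining input, but it is not supplied.

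The paper sidesteps the fibrewise analysis entirely with a characteristic-free observation: a split bundle $E=L_1\oplus L_2\oplus L_3$ is determined by the image of $\phi^{-1}(E)$ in $\Alb^1(\rU_2(C))\cong\rJ(C)$, namely the subscheme $\{L_1\otimes L_2,\,L_1\otimes L_3,\,L_2\otimes L_3\}$. Hence the kernel $K$ of $\Aut(\SU_3(C))\to\Aut(\rJ(C))$ fixes pointwise the whole irreducible $4$-dimensional locus $X$ of sums of three line bundles; $\theta(X)$ is reduced and Heisenberg-invariant, hence contained in no hyperplane, and Lemma~\ref{lem:proj-frame} applies. If you replace your Kummer-node step with this observation, the rest of your argument goes through: the linearization via $\cO(1)$, the order-at-most-$2$ kernel of $\Aut(\SU_3(C))\to\PGL_9$, and the triviality of the action on the Albanese are all fine, though for the final application of Lemma~\ref{lem:proj-frame} you still need to justify that $\theta(\mathrm{Sing}(\SU_3(C)))$ (or the sublocus $\theta(X)$) spans $\bP^8$, which the paper does via irreducibility of the Heisenberg representation.
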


\begin{proof}
We have homomorphisms
\[
\Aut(\SU_3(C))\to \Aut({\rm Sing}(\SU_3(C))) \to \Aut(\rU_2(C))\to \Aut(\Alb^1(\rU_2(C)))\cong \Aut(\rJ(C)),
\]
where the second map comes from Lemma~\ref{lem:U2-normal} and the isomorphism comes from Lemma~\ref{lem:alb}. 

By Proposition~\ref{prop:picard-group}, $\Pic(\SU_3(C))$ is infinite cyclic and only one of its generators has sections (for example, by Corollary~\ref{cor:verlinde}), so any automorphism of $\SU_3(C)$ has to preserve $\cO(1)$, and in particular, the map $\theta$. In particular, $\Aut(\SU_3(C))$ is affine (being an extension with finite kernel of a linear group scheme). 

So its image in $\Aut(\rJ(C))$ is finite, and it suffices to show that the kernel $K$ of this composition is finite. A point $E$ in $\SU_3(C)$ that splits as a sum of three line bundles $L_1 \oplus L_2 \oplus L_3$ is determined by the subscheme of $\rJ(C)$ which is the image of $\phi^{-1}(E)$ ($\phi$ being the map in Lemma~\ref{lem:U2-normal}) in $\Alb^1(\rU_2(C))$ (geometrically, this consists of the points $\{L_1 \otimes L_2, L_2 \otimes L_3, L_1 \otimes L_3\}$). It follows therefore that $E$ is fixed by $K$, and hence $K$ pointwise fixes the locus $X$ of points in $\SU_3(C)$ representing vector bundles that are sums of three line bundles. 

Note that $X$ is irreducible since it is the image of $\rJ(C) \times \rJ(C) \to \SU_3(C)$ under the map $(L_1, L_2) \mapsto L_1 \oplus L_2 \oplus (L_1 \otimes L_2)^{-1}$. Furthermore, $\theta(X)$ is a reduced Heisenberg-invariant subscheme and so is not contained in any hyperplane of $\bP^8$: indeed, the given representation of the Heisenberg group is irreducible (even for non-ordinary curves of characteristic 3) by \cite[Appendix]{sekiguchi}. So by Lemma~\ref{lem:proj-frame}, any element of $\PGL_9$ which fixes $\theta(X)$ is trivial, and hence $K$ is finite.
\end{proof}

\begin{proof}[Proof of Proposition~\ref{prop:gamma-stable}]
First, we claim that the stabilizer in $\SL_9(\bk)$ is finite. For this, it suffices to show that the projective stabilizer of the line $[\gamma_{(C,P)}]$ in $\PGL_9(\bk)$ is finite. Note that any element in the projective stabilizer gives a linear change of coordinates of $\bP^8$ which preserves the branch locus of $\theta$, and hence gives an automorphism of $\SU_3(C)$, so we can use Lemma~\ref{lem:autSU3} to finish the proof of our claim.

If $\rJ(C)[3] \cong \bmu_3^2\times (\bZ/3)^2$, then $\gamma_{(C,P)}$ is Heisenberg-invariant, so can be conjugated into the subspace $\fh$ spanned by the vectors in \cite[(2.4)]{w39-paper}, and it follows in particular that there exist such elements with finite stabilizer.  Hence,  by dimension considerations, if $U\subset \fh$ is the open subset on which the stabilizer is finite, then $\SL_9 \cdot U$ is Zariski dense in $\bigwedge^3(V_9)$. It follows in particular that over any field, the generic point of $\fh$ is stable, and since the non-stable locus is everywhere of codimension at most $1$ \cite[Proposition 2.10]{w39-paper}, it follows that the non-stable locus in $\fh$ over $\bZ$ is the Zariski closure of the characteristic $0$ non-stable locus.  The latter is a union of hyperplanes \cite[Proposition 2.8]{w39-paper}, and thus the same holds over any field.  Any element of such a hyperplane can clearly be lifted to characteristic $0$ in such a way as to remain a non-stable element of the $\fh$.  In particular, the lift has infinite stabilizer, so the same holds for the original element. 

It follows that $\gamma_{(C,P)}$ is stable for all but the codimension $\ge 2$ locus of non-ordinary curves of characteristic $3$. Again, \cite[Proposition 2.10]{w39-paper} tells us that the non-stable locus is everywhere of codimension at most $1$, and must therefore be empty in the moduli stack of pairs $(C,P)$ with $C$ smooth. 
\end{proof}

\section{Projective duality and Coble hypersurfaces}

Let $V_9$ be a $9$-dimensional vector space. Fix a stable vector $\gamma \in \bigwedge^3 V_9$ with respect to the natural action of $\SL(V_9)$. Let $\bP(V_9^*)$ denote the space of lines in $V_9^*$. Apply the comultiplication map $\bigwedge^3 V_9 \to \bigwedge^2 V_9 \otimes V_9$ to $\gamma$, use the natural surjection $V_9 \otimes \cO_{\bP(V_9^*)} \to \cO_{\bP(V_9^*)}(1)$, and interpret $\bigwedge^2 V_9$ as the space of skew-symmetric matrices $V_9^* \to V_9$. This allows us to interpret $\gamma$ as a family of skew-symmetric matrices
\[
\Phi_\gamma \colon V_9^* \to V_9 \otimes \cO_{\bP(V_9^*)}(1)
\]
over $\bP(V_9^*)$. Let $X_\gamma \subset Y_\gamma \subset \bP(V_9^*)$ be the loci
\begin{align*}
X_\gamma &= \{x \in \bP(V_9^*) \mid \rank(\Phi_\gamma|_x) \le 4\},\\
Y_\gamma &= \{x \in \bP(V_9^*) \mid \rank(\Phi_\gamma|_x) \le 6\}.
\end{align*}
Improving upon \cite{GS,GSW}, the following theorem is proven in \cite[\S 3]{w39-paper}.

\begin{theorem} \label{thm:w39-main}
\begin{enumerate}[\rm \indent (a)]
\item $Y_\gamma$ is a cubic hypersurface whose singular locus is $X_\gamma$.
\item $X_\gamma$ is smooth of dimension $2$, and the locus where $\rank \Phi_\gamma \le 2$ is empty.
\item If $X_\gamma$ has a rational point, then it is isomorphic to the Jacobian of a smooth genus $2$ curve $C_\gamma$. Furthermore, the line bundle giving the embedding $X_\gamma \subset \bP(V_9^*)$ is a $(3,3)$ polarization.
\end{enumerate}
\end{theorem}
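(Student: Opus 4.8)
The plan is to exploit the Pfaffian structure of the family $\Phi_\gamma$ and to read all three statements off a single cubic identity, deferring the genuinely hard regularity statements to a degeneration argument; I would work over $\bZ$ so that the Pfaffian identities are characteristic-free. First I would reinterpret the family: for $x \in V_9^*$ the matrix $\Phi_\gamma|_x$ is the skew form $M_x := \gamma(x,-,-) \in \bigwedge^2 V_9$ on $V_9^*$, and since $\gamma$ is alternating, $M_x(x,-) = \gamma(x,x,-) = 0$, so $x$ always lies in the kernel of $M_x \colon V_9^* \to V_9$. As $\dim V_9 = 9$ is odd, $\rank M_x \le 8$, with equality on a dense open set for stable $\gamma$. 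I would then introduce the maximal Pfaffian vector $w_\gamma(x) := M_x^{\wedge 4} \in \bigwedge^8 V_9 \cong V_9^*$ (using a volume form), whose $k$-th entry is, up to sign, the $8\times 8$ sub-Pfaffian $\Pf_k(M_x)$ and is therefore homogeneous of degree $4$ in $x$. The classical identity $M_x \cdot w_\gamma(x) = 0$ shows $w_\gamma(x) \in \ker M_x$; on the rank-$8$ locus this kernel is the line $\bk x$, so $w_\gamma(x)$ and $x$ are proportional there. Vanishing of all $2\times 2$ minors $w_\gamma(x)_i\,x_j - w_\gamma(x)_j\,x_i$ on a dense set forces them to vanish identically, and a degree count yields a cubic form $F$ with the polynomial identity $w_\gamma(x) = F(x)\,x$. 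Since $\rank M_x \le 6$ exactly when $w_\gamma(x) = 0$, and $x \ne 0$, this gives $Y_\gamma = \{F = 0\}$; stability of $\gamma$ guarantees $\rank M_x = 8$ somewhere, hence $F \ne 0$ and $Y_\gamma$ is a genuine cubic hypersurface.

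For the singular locus in (a) I would differentiate the identity $w_\gamma(x) = F(x)x$: on $\{F=0\}$ one has $\partial_j (w_\gamma)_k = (\partial_j F)\,x_k$, while directly $Dw_\gamma|_x(\delta x) = 4\,M_x^{\wedge 3} \wedge M_{\delta x}$. Hence $x$ is a singular point of $Y_\gamma$ exactly when $M_x^{\wedge 3}\wedge M_{\delta x} = 0$ for all $\delta x$. If $\rank M_x \le 4$ then $M_x^{\wedge 3} = 0$ and $x$ is automatically singular, giving $X_\gamma \subseteq \operatorname{Sing}(Y_\gamma)$. Conversely, at a point of rank exactly $6$ the element $M_x^{\wedge 3}$ is a nonzero decomposable element of $\bigwedge^6 V_9$ supported off the $3$-dimensional kernel, and $M_x^{\wedge 3}\wedge M_{\delta x}$ is nonzero for some $\delta x$ unless $\gamma$ has no component linking the kernel to the rank-$6$ block, a block decomposition that stability forbids. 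This establishes $\operatorname{Sing}(Y_\gamma) = X_\gamma$.

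Part (b) is where I expect the real difficulty. In the space of all skew forms on a $9$-dimensional space the rank-$\le 4$ locus has codimension $\binom{5}{2} = 10$ and the rank-$\le 2$ locus codimension $\binom{7}{2} = 21$, both larger than $\dim \bP(V_9^*) = 8$; so the assertion that $X_\gamma$ is a nonempty smooth surface (and that rank $\le 2$ never occurs) is a phenomenon special to stable $\gamma$, not a generic intersection estimate. I would prove it by the strategy used elsewhere in the paper: exhibit one explicit stable trivector — the Heisenberg-invariant $\gamma$ spanned by the vectors generating $\fh$ — for which $M_x$, its rank loci and its tangent spaces can be computed by hand and seen to give a smooth abelian surface with empty rank-$\le 2$ locus, and then spread this out. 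Smoothness and the dimension of $X_\gamma$ are open and semicontinuous conditions on the stable locus, which is irreducible, so they propagate from this one point to every stable $\gamma$. The main obstacle is precisely controlling these degenerate rank loci uniformly over all stable $\gamma$, since the defining Pfaffian equations are very far from being a regular sequence.

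Finally, for (c), I would argue that the smooth projective surface $X_\gamma = \operatorname{Sing}(Y_\gamma)$ is an abelian surface. The skew matrix $M_x$ with linear entries furnishes a Pfaffian (Buchsbaum–Eisenbud / Eagon–Northcott type) resolution of $\cO_{X_\gamma}$ on $\bP(V_9^*)$, from which I would read off that $\omega_{X_\gamma}$ is trivial and $h^1(\cO_{X_\gamma}) = 2$; choosing the given rational point as origin then makes $X_\gamma$ an abelian surface. Computing in this resolution, or equivalently from the fact that $X_\gamma \subset \bP^8$ is nondegenerate of the expected degree, gives $(\cO_{\bP(V_9^*)}(1)|_{X_\gamma})^2 = 18$ with $h^0 = 9$, which forces the polarization type to be $(3,3)$. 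Since a principally polarized abelian surface is either the Jacobian of a smooth genus-$2$ curve or a product of elliptic curves, and the product case is ruled out by the irreducibility and nondegeneracy of $X_\gamma$ in $\bP^8$ (again a consequence of stability), I conclude $X_\gamma \cong \Jac(C_\gamma)$ for a smooth genus-$2$ curve $C_\gamma$.
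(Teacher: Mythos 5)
First, a point of comparison: this paper does not prove Theorem~\ref{thm:w39-main} at all --- it is imported verbatim from the companion paper \cite[\S 3]{w39-paper}, so there is no internal argument to measure your proposal against. Judged on its own, your part (a) follows the standard Pfaffian-adjoint route (the vector of $8\times 8$ sub-Pfaffians $w_\gamma(x)$ with $M_x\cdot w_\gamma(x)=0$, hence $w_\gamma(x)=F(x)x$), and this can be made to work; but the one step carrying all the content is exactly the one you assert rather than prove. At a rank-$6$ point with $W=\im(M_x)$, the condition $M_x^{[3]}\wedge M_{\delta x}=0$ for all $\delta x$ unwinds to $\gamma\in\bigwedge^2 W\wedge V_9$ (be careful here: contraction does not descend to $V_9/W$, so the condition is \emph{not} the weaker $\gamma\in W\wedge\bigwedge^2 V_9$, which holds automatically because $x\in\ker M_x$), and one must then check that this block shape is destabilized, e.g.\ by the one-parameter subgroup with weights $1$ on $W$ and $-2$ on a complement, whose weights on the surviving components $\bigwedge^3W$ and $\bigwedge^2W\otimes(V_9/W)$ are $3$ and $0$, hence all nonnegative. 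You should also use divided powers $M_x^{[k]}$ rather than $M_x^{\wedge k}$ throughout, since $M_x^{\wedge 3}=3!\,M_x^{[3]}$ vanishes identically in characteristics $2$ and $3$. One also needs an argument (not just an appeal to stability) that $F\not\equiv 0$.

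The genuine gaps are in (b) and (c). For (b), spreading out from one explicit Heisenberg-invariant example by openness/semicontinuity only yields the conclusion for $\gamma$ in a dense open subset of the stable locus, whereas the theorem is needed for \emph{every} stable $\gamma$ --- in this paper it is applied to the specific vectors $\gamma_{(C,P)}$, which are proved stable (Proposition~\ref{prop:gamma-stable}) but are in no way known to be generic. Controlling the rank loci uniformly over the whole stable locus is precisely the content of \cite[\S 3]{w39-paper}, and you have named the obstacle without overcoming it. For (c), two steps fail as stated. There is no ``Pfaffian (Buchsbaum--Eisenbud / Eagon--Northcott type) resolution'' to read $\omega_{X_\gamma}$ off of: the rank-$\le 4$ locus of a $9\times 9$ skew matrix of linear forms has expected codimension $\binom{5}{2}=10>8$, so $X_\gamma$ is very far from the determinantal-generic situation and its resolution is not given by any standard complex. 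More seriously, ruling out the product case by ``irreducibility and nondegeneracy'' does not work: a product $E_1\times E_2$ of elliptic curves carries a $(3,3)$-polarization embedding it as an irreducible, nondegenerate surface in $\bP^8$, so excluding it (which is where stability of $\gamma$ must really enter, as in \cite{GS,GSW}) requires a separate argument that your proposal does not supply.
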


In Definition~\ref{def:Dgamma}, we constructed a sextic hypersurface $D_\gamma \subset \bP(V_9)$.

\begin{proposition} \label{prop:proj-dual}
If $\gamma$ is stable, then $D_\gamma$ and $Y_\gamma$ are projectively dual hypersurfaces.
\end{proposition}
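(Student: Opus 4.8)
The plan is to prove the stronger statement $Y_\gamma^\vee = D_\gamma$ by showing that the Gauss (conormal) map of the cubic $Y_\gamma$ lands inside $D_\gamma$, matching the tangent hyperplanes of $Y_\gamma$ with the fiberwise hyperdiscriminant condition cutting out $D_\gamma$, and then upgrading this to an equality of reduced hypersurfaces via a birational inverse; biduality then yields $D_\gamma^\vee = Y_\gamma$.

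\emph{Step 1 (make $D_\gamma$ explicit).} Over $[\ell]\in\bP(V_9)$ (with $\ell\subset V_9$ a line) the fiber of $\bigwedge^3\cQ$ is $\bigwedge^3(V_9/\ell)$ and the section takes the value $\bar\gamma$, the image of $\gamma$. By Proposition~\ref{prop:w38-kernel}(b),(c), $\bar\gamma$ lies on the hyperdiscriminant exactly when it is $\SL(V_9/\ell)$-unstable, i.e. when there is a $3$-dimensional subspace $V_3\subseteq (V_9/\ell)^*=\ell^\perp$ with $\gamma(v_1,v_2,w)=0$ for all $v_1,v_2\in V_3$ and all $w\in\ell^\perp$; equivalently $\gamma(v_1,v_2,-)\in\ell$ for all $v_1,v_2\in V_3$. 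Call this condition $(\star)$. Thus $[\ell]\in D_\gamma$ iff $(\star)$ holds (this is the fiberwise form of the duality in Proposition~\ref{prop:proj-dual-w37}, the hyperdiscriminant being dual to the Grassmannian of decomposable trivectors).

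\emph{Step 2 (the cubic and its Gauss map).} Writing $\Phi_\gamma|_{[\xi]}\in\bigwedge^2 V_9$ for the bivector representing the skew form $\gamma(\xi,-,-)$, the locus $Y_\gamma$ is where $\rank\le 6$, i.e. where $(\Phi_\gamma|_{[\xi]})^{\wedge 4}=0\in\bigwedge^8 V_9$. Fixing a volume form identifies $\bigwedge^8 V_9\cong V_9^*$, and since $\xi\lrcorner(\Phi_\gamma|_{[\xi]})^{\wedge4}=4\,(\gamma(\xi,\xi,-))\wedge(\Phi_\gamma|_{[\xi]})^{\wedge3}=0$, the image of $(\Phi_\gamma|_{[\xi]})^{\wedge4}$ in $V_9^*$ is proportional to $\xi$; writing $(\Phi_\gamma|_{[\xi]})^{\wedge4}=C(\xi)\,\xi$ defines a cubic $C$ with $Y_\gamma=\{C=0\}$ (recovering Theorem~\ref{thm:w39-main}(a)). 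The smooth locus is $Y_\gamma\setminus X_\gamma=\{\rank\Phi_\gamma|_{[\xi]}=6\}$, and $Y_\gamma^\vee$ is the closure of the image of the Gauss map $[\xi]\mapsto[dC_\xi]\in\bP(V_9)$.

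\emph{Step 3 (tangent hyperplane $=(\star)$; the technical heart).} At a smooth point put $\Phi_\gamma|_{[\xi]}=e_1\wedge e_2+e_3\wedge e_4+e_5\wedge e_6$, so $S_\xi:=\mathrm{image}=\langle e_1,\dots,e_6\rangle$ and the radical is $R_\xi=\langle e_7^*,e_8^*,e_9^*\rangle\ni\xi$; take $\xi=e_9^*$. Differentiating $C(\xi)\xi=(\Phi_\gamma|_{[\xi]})^{\wedge4}$ gives, at $\xi\in Y_\gamma$, the identity $dC_\xi(\eta)\,\xi=4\,\gamma(\eta,-,-)\wedge(\Phi_\gamma|_{[\xi]})^{\wedge3}$, where $(\Phi_\gamma|_{[\xi]})^{\wedge3}$ is a nonzero multiple of $e_1\wedge\cdots\wedge e_6$, so only the $\bigwedge^2\langle e_7,e_8,e_9\rangle$-part of $\gamma(\eta,-,-)$ contributes. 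The normal form forces $\gamma=e_9\wedge(e_1\wedge e_2+e_3\wedge e_4+e_5\wedge e_6)+\gamma'$ with $\gamma'\in\bigwedge^3\langle e_1,\dots,e_8\rangle$, so no monomial of $\gamma$ contains $e_9$ together with $e_7$ or with $e_8$. Reading off coefficients then yields $dC_\xi\propto\sum_{k\le 6}c_k e_k\in S_\xi$ with $c_k$ the coefficient of $e_7\wedge e_8\wedge e_k$; equivalently $\ell:=\langle dC_\xi\rangle=\langle\gamma(e_7^*,e_8^*,-)\rangle$, while $\gamma(e_7^*,e_9^*,-)=\gamma(e_8^*,e_9^*,-)=0$. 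Hence, taking $V_3=R_\xi\subseteq\ell^\perp$, every $\gamma(v_1,v_2,-)$ with $v_1,v_2\in V_3$ lies in $\ell$, so $(\star)$ holds and $[dC_\xi]\in D_\gamma$. Passing to closures, $Y_\gamma^\vee\subseteq D_\gamma$.

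\emph{Step 4 (equality and reflexivity).} To promote this to equality I would exhibit a birational inverse to the Gauss map: given generic $[\ell]\in D_\gamma$ with its subspace $V_3$, condition $(\star)$ makes $(v_1,v_2)\mapsto\gamma(v_1,v_2,-)\in\ell\cong\bk$ a skew form $B$ on the $3$-dimensional $V_3$, whose (generically $1$-dimensional) radical recovers $[\xi]\in\bP(V_3)$ with $R_\xi=V_3$ and $[dC_\xi]=[\ell]$. This shows the Gauss map is birational, so $Y_\gamma^\vee$ is an irreducible hypersurface; being contained in the irreducible hypersurface $D_\gamma$ it equals it, and reducedness (in characteristic $\neq 2$) gives equality of the degree-$6$ schemes. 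Biduality then gives $D_\gamma^\vee=Y_\gamma$. The main obstacle is Step 3 together with the positive-characteristic bookkeeping: reflexivity of projective duality can fail in characteristic $p$, so for $p>0$ I would deduce the statements by lifting $\gamma$ to a complete DVR with characteristic-$0$ fraction field and specializing, exactly as in the proofs of Proposition~\ref{prop:w38-kernel} and Proposition~\ref{prop:gamma-stable}, and treat characteristic $2$ separately in accordance with Remark~\ref{rmk:char2}.
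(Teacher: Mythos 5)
Your Steps 1--3 are sound and amount to the same linear algebra as the paper's argument, packaged differently: the paper introduces the flag variety of flags $V_1\subset V_3\subset V_8\subset V_9^*$ and the incidence subvariety $Z_\gamma$ on which the relevant graded pieces of $\gamma$ vanish, proves $\pi_1(Z_\gamma)=Y_\gamma$ and $\pi_2(Z_\gamma)=D_\gamma$, and computes via the Laplace expansion of the Pfaffian of the $8\times 8$ minor that the tangent hyperplane at a smooth point $V_1=[\xi]$ is exactly $V_8$, i.e.\ the hyperplane dual to $\gamma(v_1,v_2,-)$ with $v_1,v_2$ spanning a complement of $\xi$ in the radical $V_3$ of $\Phi_\gamma|_{[\xi]}$ --- precisely your identity $[dC_\xi]=[\gamma(e_7^*,e_8^*,-)]$. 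However, your construction of the cubic via $(\Phi_\gamma|_{[\xi]})^{\wedge 4}=C(\xi)\xi$ fails in characteristics $2$ and $3$: since $\Phi^{\wedge 4}=4!\sum_{|I|=8}\Pf(\Phi_I)\,e_I$, the left side is identically zero there, and likewise the factor $4$ and the $\Phi^{\wedge 3}$ in your derivative formula degenerate. The fix is to use the divided powers (the $8\times 8$ and $6\times 6$ sub-Pfaffians), which is what the paper's Laplace-expansion computation does; your proposed fallback of lifting to a DVR and specializing is not an automatic substitute, because formation of the dual variety need not commute with specialization (the conormal variety of the special fiber is in general only contained in the specialization of the conormal variety), so a direct argument in each characteristic is genuinely needed.

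The more serious gap is in Step 4, which is circular as written: you deduce $D_\gamma\subseteq Y_\gamma^\vee$ from ``$Y_\gamma^\vee$ is an irreducible hypersurface contained in the irreducible hypersurface $D_\gamma$,'' but the irreducibility of $D_\gamma$ is Corollary~\ref{cor:irred}, which the paper obtains \emph{from} Proposition~\ref{prop:proj-dual} (together with irreducibility of $Y_\gamma$). A priori $D_\gamma$ is only a degree-$6$ divisor, and it could have a component on which your genericity hypotheses fail identically --- e.g.\ where the skew form $B$ on $V_3$ vanishes, or where its radical lands in $X_\gamma$ so that $dC_\xi=0$ --- and your inverse construction says nothing about such a component. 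The paper closes this by proving the inclusion $D_\gamma\subseteq\pi_2(Z_\gamma)$ pointwise: for \emph{every} $[\ell]\in D_\gamma$, Proposition~\ref{prop:w38-kernel} supplies a $V_3$, which is completed to a full flag in $Z_\gamma$, and the incidence correspondence (birational over $Y_\gamma$ via $\pi_1$) then transfers everything to the closure of the set of tangent hyperplanes. To repair your argument you must either prove irreducibility of $D_\gamma$ independently or show directly that every point of $D_\gamma$ lies in $\overline{Y_\gamma^\vee}$, which in effect forces you to introduce the incidence variety the paper uses.
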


\begin{proof}
We may assume, without loss of generality, that $\bk$ is algebraically closed. We introduce an auxiliary variety. Let $F$ be the variety of flags of the form $V_1 \subset V_3 \subset V_8 \subset V_9^*$ (the subscripts denote dimensions). 
Let $\cV_1 \subset \cV_3 \subset \cV_8 \subset \cV_9 = V_9^* \otimes \cO_F$ denote the tautological flag of subbundles on $F$. Then $\bigwedge^3 \cV_9$ has a homogeneous subbundle $\xi$ which has a filtration by (homogeneous) subbundles whose associated graded is
\[
\bigwedge^3 \cV_3 \oplus (\cV_1 \otimes \cV_3/\cV_1 \otimes \cV_9/\cV_3) \oplus (\bigwedge^2(\cV_3/\cV_1) \otimes \cV_8 / \cV_3).
\]
Let $Z_\gamma$ be the subvariety of $F$ where the image of $\gamma$ under $\bigwedge^3 \cV^*_9 \to \xi^*$ is $0$. Let $\pi_1 \colon F \to \bP(V_9^*)$ and $\pi_2 \colon F \to \bP(V_9)$ be the maps sending $V_1 \subset V_3 \subset V_8$ to $V_1$ and $V_8$, respectively.

Pick a point $x \in Z_\gamma$ and choose a basis $e_1, \dots, e_9$ for $V_9$ so that $x$ is the standard coordinate flag ($V_i$ is spanned by $e_1^*, \dots, e_i^*$). Write $[ijk]$ in place of $e_i \wedge e_j \wedge e_k$. The conditions imposed by the composition $\bigwedge^3 \cV_9^* \to \xi^*$ being $0$ are that the coefficients of the following monomials are $0$:
\begin{align*}
[123], \qquad [12i], [13i] \quad (4\le i\le 9) \qquad [23j] \quad (4 \le j \le 8)
\end{align*}
In particular, the $9 \times 9$ skew-symmetric matrix $\Phi_\gamma(\pi_1(x))$ only has nonzero entries in the bottom right $6 \times 6$ corner, so has rank $\le 6$, and so $\pi_1(x) \in Y_\gamma$. Conversely, given a point $y \in Y_\gamma \setminus X_\gamma$ (so $\rank \Phi_\gamma(y) = 6$), we may choose a basis so that this point is given by $e_i(y)=0$ for $i>1$ and the elements in its kernel satisfy $e_i=0$ for $i>3$. We may also arrange for the coefficients of $[23j]$ to vanish for $j=4,\dots,8$. Then the standard coordinate flag in $F$ maps to $y$. So $\pi_1(Z_\gamma) = Y_\gamma$. 

We claim that the tangent space to $y$ in $Y_\gamma$ is $V_8/V_1$. In our coordinates, $y = [1:0:0:0:0: 0:0:0:0]$, so we work in the affine subspace given by $e_1 = 1$. The equation of the tangent space is $\sum_i \frac{\partial f}{\partial e_i}(0) e_i$ where $f$ is the Pfaffian of $\Phi_\gamma(y)$ after specializing $e_1=1$ (and deleting the first row and column). Let $M$ be this $8 \times 8$ skew-symmetric matrix (so $f = \Pf(M)$). Using the Laplace expansion for Pfaffians, we can compute $f$ by taking a signed sum of $M_{1,i} P_i$ where $2 \le i \le 8$ and $P_i$ is the Pfaffian of the result of deleting rows and columns $1$ and $i$ from $M$. Note that $P_i(0) = 0$ unless $i=2$ since the entries of $\Phi_2(y)$ are concentrated in the bottom right $6 \times 6$ matrix. Since $e_1$ does not appear in rows 1 and 2 (before specializing $e_1=1$), this implies that the equation of the tangent space is $\sum_i \frac{\partial M_{1,2} P_2}{\partial e_i}(0) e_i$. From the constraints on $\gamma$ above, $M_{1,2}$ is the coefficient of $[239]$ times $e_9$. In particular, the equation of the tangent space is a (nonzero, since $y$ is smooth) multiple of $e_9=0$, so the claim is proven.

In particular, $\pi_1$ is injective over the smooth locus (we have shown that $V_3$ and $V_8$ in any flag in $\pi_1^{-1}(y)$ are both determined by $y$). Since $Y_\gamma$ is normal (its singular locus $X_\gamma$ has codimension $5$), this implies that $\pi_1$ is birational.

The image of $\gamma$ under the projection $\bigwedge^3 \cV_9^* \to \bigwedge^3(\cV_8)^*_x$ has the property that $\gamma(v_1,v_2,v_3) = 0$ whenever two of the vectors belong to $(\cV_3)_x$. But $\cV_8^*$ is the pullback along $\pi_2$ of the bundle $\cQ$ in Definition~\ref{def:Dgamma}, so this is the same trilinear form used there. In particular, the hyperdiscriminant of this form vanishes by Proposition~\ref{prop:w38-kernel}, so $\pi_2(x) \in D_\gamma$.

Conversely, given a point $y \in D_\gamma$ whose associated $3$-form has vanishing hyperdiscriminant, there is a $3$-dimensional space $V_3$ with the property that $\gamma(v_1,v_2,v_3)=0$ whenever two of the vectors belong to $V_3$ (Proposition~\ref{prop:w38-kernel}). As above, we can pick a basis so that the standard coordinate flag lies in $Z_\gamma$ and maps to $y$ under $\pi_2$. So $\pi_2(Z_\gamma) = D_\gamma$.

In particular, we conclude that $D_\gamma$ is the closure of the set of tangent hyperplanes to smooth points of $Y_\gamma$, so the two hypersurfaces are projectively dual to one another.
\end{proof}

\begin{corollary} \label{cor:irred}
$Y_\gamma$ and $D_\gamma$ are irreducible hypersurfaces.
\end{corollary}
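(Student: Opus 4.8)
The plan is to deduce both statements from Theorem~\ref{thm:w39-main} and Proposition~\ref{prop:proj-dual}: first I would prove that $Y_\gamma$ is irreducible by a dimension count against its singular locus, and then conclude that $D_\gamma$ is irreducible because the projective dual of an irreducible variety is irreducible. Since geometric irreducibility implies irreducibility, I may assume $\bk = \ol{\bk}$ throughout.

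For $Y_\gamma$, write $Y_\gamma = \{f = 0\}$ for a cubic form $f$ on $V_9^*$, so that $Y_\gamma$ is a hypersurface of dimension $7$ in $\bP(V_9^*) \cong \bP^8$. Suppose $f$ factored nontrivially as $f = f_1 f_2$ with $f_1, f_2$ nonconstant, including the degenerate possibilities of a repeated linear factor or a cube of a linear form. Then every partial derivative of $f$ vanishes on $\{f_1 = 0\} \cap \{f_2 = 0\}$, so this locus is contained in the singular locus of $Y_\gamma$. But two hypersurfaces in $\bP^8$ meet in dimension at least $7 + 7 - 8 = 6$, and if $f$ has a repeated factor the singular locus contains the entire vanishing locus of that factor, of dimension $7$. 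Either way the singular locus would have dimension at least $6$, contradicting Theorem~\ref{thm:w39-main}(b), which identifies the singular locus with the two-dimensional $X_\gamma$. Hence $f$ is irreducible and $Y_\gamma$ is an integral hypersurface.

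For $D_\gamma$, Proposition~\ref{prop:proj-dual} identifies $D_\gamma \subset \bP(V_9)$ with the projective dual of $Y_\gamma$, that is, the Zariski closure of the image of the Gauss map $p \mapsto T_p Y_\gamma$ defined on the smooth locus $Y_\gamma^{\mathrm{sm}}$ (a smooth point of a hypersurface has a unique tangent hyperplane, so this map is well defined). Since $Y_\gamma$ is irreducible by the previous step, $Y_\gamma^{\mathrm{sm}}$ is irreducible, being a nonempty open subset; its image under the Gauss map is therefore irreducible, and hence so is its closure $D_\gamma$.

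The only subtle point is the bookkeeping of the degenerate factorizations of the cubic $f$ in the first step, but these are dispatched uniformly by the observation that any repeated factor already forces an entire hyperplane into the singular locus, which is far larger than the two-dimensional $X_\gamma$. Once irreducibility of $Y_\gamma$ is secured, the duality step is purely formal.
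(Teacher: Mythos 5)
Your proposal is correct and follows essentially the same route as the paper: irreducibility of $Y_\gamma$ from the fact that a reducible (or non-reduced) cubic hypersurface would be singular in codimension $1$, contradicting Theorem~\ref{thm:w39-main}, and then irreducibility of $D_\gamma$ via Proposition~\ref{prop:proj-dual} and the fact that the projective dual of an irreducible variety is irreducible (the paper cites \cite[\S 1.1, Proposition 1.3]{GKZ} where you unwind the Gauss-map argument). Your explicit treatment of repeated factors is a detail the paper leaves implicit, but the substance is identical.
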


\begin{proof}
If $Y_\gamma$ were reducible, then it would be singular in codimension 1, but we know that its singular locus is $X_\gamma$, which has codimension 5 in $Y_\gamma$. The projective dual of an irreducible variety is irreducible \cite[\S 1.1, Proposition 1.3]{GKZ}, so $D_\gamma$ is also irreducible.
\end{proof}

\begin{theorem} \label{thm:coble-duality}
Let $C$ be a smooth genus $2$ curve. In characteristic different from $2$, the branch locus of $\theta \colon \SU_3(C) \to |3\Theta|$ is a degree $6$ hypersurface whose projective dual is a $\rJ(C)[3]$-invariant cubic hypersurface whose singular locus is a torsor for $\rJ(C)$.
\end{theorem}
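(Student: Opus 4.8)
The plan is to combine the structural results already established with an identification of the singular locus obtained by reduction to characteristic $0$. Fix a Weierstrass point $P$ (available after base change to $\ol{\bk}$, where there are six of them) and set $\gamma = \gamma_{(C,P)}$. Proposition~\ref{prop:surj-deg2} gives that the branch locus $B$ of $\theta$ is a hypersurface of degree $6$; Proposition~\ref{prop:inverse1} identifies $B$ with $D_\gamma$; and Proposition~\ref{prop:gamma-stable} shows that $\gamma$ is stable. Stability lets me invoke Proposition~\ref{prop:proj-dual}, so that the projective dual of $B = D_\gamma$ is $Y_\gamma$, which by Theorem~\ref{thm:w39-main}(a) is a cubic hypersurface whose singular locus is exactly $X_\gamma$; by parts (b) and (c), $X_\gamma$ is smooth of dimension $2$ and, having a rational point over $\ol{\bk}$, is a torsor for the Jacobian of a smooth genus $2$ curve. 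Together with Corollary~\ref{cor:irred} this yields every assertion except the $\rJ(C)[3]$-invariance of $Y_\gamma$ and the identification of the relevant Jacobian with $\rJ(C)$.

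For the invariance I would exploit the theta-group (Heisenberg) action. For $\alpha \in \rJ(C)[3]$ the assignment $\cE \mapsto \cE \otimes \alpha$ preserves $\SU_3(C)$, since $\det(\cE \otimes \alpha) = \alpha^{\otimes 3} = \cO_C$, and directly from the definition of $\Theta_\cE$ one has $\Theta_{\cE \otimes \alpha} = t_{-\alpha}^*\Theta_\cE$, a translate; thus $\theta$ intertwines this action with the translation action of $\rJ(C)[3] = K(3\Theta)$ on $|3\Theta|$, which is realized projective-linearly through $\PGL_9$ via the theta group of $(\rJ(C),3\Theta)$. Consequently the branch locus $B = D_\gamma$ is $\rJ(C)[3]$-invariant. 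Since projective duality is natural for the $\PGL_9$-action, the dual hypersurface $Y_\gamma \subset |3\Theta|^*$ is invariant under the contragredient action, which is again an action of $\rJ(C)[3]$; this gives the required invariance, compatible with the classical embedding $\rJ(C) \subset |3\Theta|^*$.

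The crux is to show that $X_\gamma$ is a torsor for $\rJ(C)$ itself, rather than for the Jacobian of some a priori unrelated curve $C_\gamma$ produced by Theorem~\ref{thm:w39-main}(c). I would argue by specialization. Lift $(C,P)$ to a pair $(\cC,\cP)$ over a complete discrete valuation ring $R$ with residue field $\ol{\bk}$ and fraction field $\bK$ of characteristic $0$, as in the proof of Proposition~\ref{prop:surj-deg2}. The construction of $\gamma_{(\cdot,\cdot)}$ is built from the cohomology of the universal adjoint bundle and is compatible with base change, so it yields a family $\gamma_R$ over $R$ reducing to $\gamma$ on the special fibre; by Theorem~\ref{thm:w39-main}(b) the associated family $X_{\gamma_R}$ is smooth and proper of relative dimension $2$ with abelian-variety fibres, hence an abelian scheme (torsor) over $R$. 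Over the generic fibre the theorem of Dolgachev proved by Ortega and Minh identifies the dual of the branch sextic with the Coble cubic of $\cC_\bK$, whose singular locus is $\rJ(\cC_\bK)$, so $X_{\gamma_\bK} \cong \rJ(\cC_\bK)$ as torsors. By the uniqueness of abelian-scheme extensions (Néron models), $X_{\gamma_R} \cong \rJ(\cC/R)$ over $R$, and reducing to the special fibre gives $X_\gamma \cong \rJ(C)$ as torsors over $\ol{\bk}$; the $\rJ(C)$-action is defined over $\bk$ because the acting abelian surface is the Picard scheme $\Pic^0(X_\gamma)$, which is defined over $\bk$ and geometrically isomorphic to $\rJ(C)$, so the identification descends.

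\medskip

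\emph{Main obstacle.} The genuinely delicate point is the third paragraph: verifying that the construction $\gamma_{(\cdot,\cdot)}$ is flat-compatible with the $R$-family and that $X_{\gamma_R}$ really extends to an honest abelian scheme, so that the characteristic-$0$ input (which is what lets us name the Jacobian $\rJ(C)$ rather than an abstract abelian surface) may be transported to the special fibre. By contrast, the assembly in the first paragraph and the equivariance in the second are essentially formal given the cited results.
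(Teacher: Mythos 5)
Your first paragraph reproduces the paper's proof almost verbatim: reduce to $\ol{\bk}$, pick a Weierstrass point, and chain Propositions~\ref{prop:surj-deg2}, \ref{prop:inverse1}, \ref{prop:gamma-stable}, \ref{prop:proj-dual} with Theorem~\ref{thm:w39-main} and Corollary~\ref{cor:irred} (the paper additionally records that irreducibility together with $\operatorname{char}\bk\neq 2$ forces the sextic to be \emph{reduced}, which you should state explicitly since projective duality is being applied to it). Where you differ is on the two points the paper leaves implicit. For the $\rJ(C)[3]$-invariance, the paper relies on the Heisenberg-invariance of $\gamma_{(C,P)}$ already noted in the proof of Proposition~\ref{prop:gamma-stable}; your argument via $\cE\mapsto\cE\otimes\alpha$ and the theta group is a correct, more self-contained version of the same fact. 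The genuine divergence is the identification of the singular locus with $\rJ(C)$: the paper simply quotes Theorem~\ref{thm:w39-main}, deferring to the companion paper [RS] (and the Appendix, which matches $W_\gamma$ with data intrinsic to $\rJ(C)\times\rJ(C)$) for the fact that the genus-$2$ curve produced there is $C$ itself, whereas you rederive this by lifting $(C,P)$ to a complete DVR, invoking the Ortega--Minh theorem on the generic fibre, and specializing via the N\'eron property of abelian schemes. Your route is viable and buys independence from the intrinsic identification in [RS], at the cost of (i) verifying that the entire $\gamma$-construction is compatible with the family over $R$ --- which you rightly flag as the substantive outstanding work, and which is not a formality since it involves the relative moduli space, the relative theta map, and extension of sections across the non-stable locus in families --- and (ii) importing the characteristic-$0$ duality theorem as a black box. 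One small slip at the end: the abelian surface acting on the torsor $X_\gamma$ is $\Alb^0(X_\gamma)$ (the dual of $\Pic^0(X_\gamma)$), not $\Pic^0(X_\gamma)$ itself; since $\rJ(C)$ is principally polarized this is harmless up to isomorphism, but a merely geometric isomorphism with $\rJ(C)$ does not by itself descend to $\bk$ --- one needs the Galois-equivariance of the construction, a point the paper also elides by passing to $\ol{\bk}$ at the outset.
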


\begin{proof}
We may as well assume that $\bk$ is algebraically closed. From Corollary~\ref{cor:irred}, the branch locus is irreducible. Since the characteristic is different from $2$, irreducibility of $\SU_3(C)$ combined with this fact implies that the branch locus is a reduced hypersurface. 

Pick any Weierstrass point $P \in C$. The construction in \S\ref{ss:construct-trivector} gives a vector $\gamma_{(C,P)} \in \bigwedge^3 V_9$ with $V_9 = \rH^0(\SU_3(C); \Theta)^*$ so that $D_{\gamma_{(C,P)}}$ is the branch locus of $\theta$ (Proposition~\ref{prop:inverse1}). Furthermore, $\gamma_{(C,P)}$ is stable by Proposition~\ref{prop:gamma-stable}. By Proposition~\ref{prop:proj-dual}, the projective dual of $D_{\gamma_{(C,P)}}$ is a cubic hypersurface, and from Theorem~\ref{thm:w39-main}, the singular locus of the cubic hypersurface is isomorphic to $\rJ(C)$.
\end{proof}

\begin{remark}
In characteristics different from $3$, it is easy to show using the techniques in \cite{beauville} that there is a unique $\rJ(C)[3]$-invariant cubic hypersurface whose singular locus is $\rJ(C)$. In particular, Theorem~\ref{thm:coble-duality} generalizes the main result of \cite{minh, ortega} to characteristics different from $2$ and $3$.
\end{remark}

\begin{remark}
We have seen how to construct the Coble cubic and its projective dual directly from $\gamma$. Using the equation \eqref{eqn:double}, we can also build, directly from $\gamma$, a double cover of $\bP(V_9)$ which is branched along $D_\gamma$. However, we also know that this double cover is $\SU_3(C)$, so we have an invariant-theoretic construction for it.
\end{remark}

Finally, we make some comments about the case when $\bk$ is a field of characteristic $2$.

\begin{lemma} \label{lem:exterior-quad}
Let $V$ be a vector space over a field of characteristic $2$, and let $k>1$ be an integer. Then there is a unique quadratic map $Q \colon \bigwedge^k V\to \bigwedge^{2k} V$ which vanishes on pure tensors and satisfies $Q(v+w)=Q(v)+Q(w)+v\wedge w$.
\end{lemma}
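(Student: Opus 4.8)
The plan is to establish uniqueness and existence separately. Recall that a quadratic map $Q\colon\bigwedge^kV\to\bigwedge^{2k}V$ means a map with $Q(\lambda\omega)=\lambda^2Q(\omega)$ together with bilinearity of the polar form $B(\omega,\omega'):=Q(\omega+\omega')-Q(\omega)-Q(\omega')$; the stated equation just asserts $B(\omega,\omega')=\omega\wedge\omega'$. Uniqueness is then immediate: if $Q_1,Q_2$ both satisfy the hypotheses, their difference $D:=Q_1-Q_2$ is additive (the two copies of $\omega\wedge\omega'$ cancel) and vanishes on pure tensors, so since pure tensors span $\bigwedge^kV$ and $D$ is additive, $D\equiv 0$.

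For existence I would fix a basis $\{f_i\}$ of $V$ and a total order on the resulting basis $\{e_I\}$ of $\bigwedge^kV$ (indexed by $k$-subsets $I$), and set $Q(\sum_I x_Ie_I):=\sum_{I<J}x_Ix_J\,e_I\wedge e_J$. This is manifestly a quadratic map with $Q(e_I)=0$, and a short expansion, using $e_J\wedge e_I=(-1)^{k^2}e_I\wedge e_J=e_I\wedge e_J$ in characteristic $2$, shows its polar form is exactly $B(\omega,\omega')=\omega\wedge\omega'$ (consistently, $B(\omega,\omega)=\omega\wedge\omega=0$, since $\omega\mapsto\omega\wedge\omega$ is additive in characteristic $2$ and kills decomposables). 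This $Q$ depends a priori on the basis, but once the defining properties are checked, uniqueness renders it canonical; the infinite-dimensional case follows by working with finitely supported coordinates.

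The main obstacle is to verify that this $Q$ vanishes on \emph{all} pure tensors rather than just the $e_I$. I would consider $P(v_1,\dots,v_k):=Q(v_1\wedge\cdots\wedge v_k)$ and show it is additive in each argument. Writing $a=v_1\wedge v_2\wedge\cdots\wedge v_k$ and $b=v_1'\wedge v_2\wedge\cdots\wedge v_k$, the defining relation gives $P(v_1+v_1',v_2,\dots,v_k)=P(v_1,\dots,v_k)+P(v_1',v_2,\dots,v_k)+a\wedge b$, and this is precisely where $k>1$ is used: the product $a\wedge b$ contains the repeated factor $v_2$ and hence vanishes, leaving additivity (for $k=1$ this term is $v_1\wedge v_1'\neq 0$, matching the fact that no such $Q$ exists). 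Since also $P(\lambda v_1,\dots,v_k)=\lambda^2P(v_1,\dots,v_k)$, the map $P$ is additive and Frobenius-semilinear in each slot, so expanding $v_j=\sum_i c_{ji}f_i$ gives $P(v_1,\dots,v_k)=\sum_{i_1,\dots,i_k}(c_{1i_1}\cdots c_{ki_k})^2\,P(f_{i_1},\dots,f_{i_k})$. Each $P(f_{i_1},\dots,f_{i_k})$ is $Q$ of a basis pure tensor $\pm e_I$ (or of $0$), hence vanishes, so $P\equiv 0$ and $Q$ kills all pure tensors, finishing the proof.
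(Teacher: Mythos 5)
Your proof is correct and follows essentially the same route as the paper: define $Q$ by the pairwise-wedge formula on coordinates, verify the polar identity, and reduce vanishing on general pure tensors to basis ones by exploiting that two terms sharing a common factor wedge to zero (the only place $k>1$ enters). The paper packages this last reduction as an induction on the number of non-coordinate factors in a pure tensor rather than via multi-additivity and Frobenius-semilinearity of $P(v_1,\dots,v_k)$, but the substance is identical.
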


\begin{proof}
Uniqueness is easy: a quadratic map is determined by the associated bilinear map and its values on a basis, and the standard basis consists entirely of pure tensors.  

For existence, define $Q$ by $Q(v_1 + \cdots + v_r) = \sum_{1 \le i < j \le r} v_i \wedge v_j$ whenever the $v_i$ are pure tensors in the standard basis. It remains only to show that $Q$ vanishes on all pure tensors, not just those formed from coordinate vectors. This follows by induction on the number of coordinate vectors that appear in a pure tensor: expand one of the non-coordinate vectors in the wedge into coordinate vectors and observe that each term in the sum is annihilated by $Q$ and any two terms wedge to $0$.
\end{proof}

\begin{remark} \label{rmk:char2}
If $\bk$ is a field of characteristic $2$, the branch locus of the map $\theta \colon \SU_3(C) \to \bP(V_9)$ is the square of a cubic equation due to Lemma~\ref{lem:squaremod4}. We will identify this cubic equation now.

Let $\fe_8$ be the split Lie algebra of type $\rE_8$ over $\bk$. It has a $\bZ/3$-graded decomposition $\fsl(V_9) \oplus \bigwedge^3 V_9 \oplus \bigwedge^6 V_9$ (see \cite[\S 2.2]{w39-paper}). Also, $\fe_8$ has a squaring map $x \mapsto x^{[2]}$ which induces a squaring map $\bigwedge^3 V_9 \to \bigwedge^6 V_9$; this is the map $Q \colon \bigwedge^3 V_9 \to \bigwedge^6 V_9$ defined in Lemma~\ref{lem:exterior-quad}. Given a quotient $V_9 \to V_8$, also use $Q$ to denote the squaring map $\bigwedge^3 V_8 \to \bigwedge^6 V_8$. Then we have a commutative square
\[
\xymatrix{ \bigwedge^3 V_9 \ar[r]^-Q \ar[d]_-\pi & \bigwedge^6 V_9 \ar[d]^-\pi \\
\bigwedge^3 V_8 \ar[r]^-Q & \bigwedge^6 V_8 }.
\]
We claim that $f_{16}(\pi(\gamma)) = \Pf(Q(\pi(\gamma)))^2$ (recall that $f_{16}$ is the hyperdiscriminant of $\bigwedge^3 V_8$). Both $f_{16}$ and $(\Pf \circ Q)^2$ are degree $16$ invariants; note that $\bigwedge^3 V_8$ has a dense orbit with respect to $\GL(V_8)$, so by uniqueness, there is a constant $c$ such that $(\Pf \circ Q)^2 = cf_{16}$. Since both are defined over $\bF_2$, we have $c \in \{0,1\}$, so we just need to show that $\Pf \circ Q$ is not identically $0$. First note that $Q([123] + [456]) = [123456]$, so $Q \ne 0$. It follows that if $\pi(\gamma)$ is an element in the dense orbit given by \eqref{eqn:trace-form}, then $Q(\pi(\gamma)) \ne 0$. The stabilizer of this element in $\fsl(V_8)$ is $\fsl_3$, which acts irreducibly on $V_8$ (it is the adjoint representation), so we conclude that $Q(\pi(\gamma))$ has full rank, and so $\Pf(Q(\pi(\gamma))) \ne 0$. This shows that $c=1$ and proves the claim.

A point in $\bP(V_9)$ represents a quotient $V_9 \to V_8$, and it belongs to $D_\gamma$ if and only if $f_{16}(\pi(\gamma)) = 0$. From our claim above, this is equivalent to $\Pf(Q(\pi(\gamma))) \ne 0$.

So the cubic equation whose square is the branch locus of $\theta$ can be obtained by applying the Pfaffian construction (from the beginning of this section) to $\gamma^{[2]} \in \bigwedge^3 V_9^* \cong \bigwedge^6 V_9$.
\end{remark}

\appendix

\section{Comparison}

Keep the notation from \S\ref{sec:genus2-gen}. In \S\ref{ss:construct-trivector}, we built a trivector $\gamma \in \bigwedge^3 V_9$. Via comultiplication, we turn this into a map $V_9^* \to \bigwedge^2 V_9$. Let $W_\gamma \subset \bigwedge^2 V_9$ be its image. In \cite[\S 4]{w39-paper}, we give an alternative description for $W_\gamma$. Our goal in this appendix is to prove Theorem~\ref{thm:wgamma-eqn} so that we can show that this description agrees with the one here.

Let $V_9 = \rH^0(\SU_3(C); \cO(1))^*$ so that $\rJ^1(C) \subset \bP(V_9^*)$ is embedded by a $(3,3)$-polarization, denoted $\cO(1)$. We identify $\rJ^1(C) \cong \rJ(C)$ via $\cL \mapsto \cL(-P)$. Define a codimension $1$ subvariety of $\rJ(C) \times \rJ(C)$ by
\[
X = X_{C,P} = \{(\cL_1, \cL_2) \mid \hom_C(\cL_1, \cL_2(P)) \ne 0\}.
\]
The line bundle $\cO(1,1) \otimes \cO(-X)$ has divisor class $3\pi_1^* \Theta + 3\pi_2^* \Theta - \Theta_{\rm diag}$. This is the pullback of a principal polarization on $\rJ(C) \times \rJ(C)$ via the endomorphism 
\begin{align*}
\rJ(C) \times \rJ(C) &\to \rJ(C) \times \rJ(C)\\
(a,b) &\mapsto (2a+b, a+2b).
\end{align*}
The kernel of this map is the diagonal copy of $\rJ(C)[3]$ which has degree $81$. In particular, $\cO(1,1) \otimes \cO(-X)$ has a single cohomology group of dimension $9 = \sqrt{81}$. We will see that it has nonzero global sections by exhibiting bilinear equations that vanish on $X$.

\begin{theorem} \label{thm:wgamma-eqn}
\begin{enumerate}[\rm (a)]
\item $\dim W_\gamma = 9$.
\item $W_\gamma \subset \rH^0(\rJ(C) \times \rJ(C); \cO(1,1))$ vanishes on $X$. In particular, 
\[
W_\gamma = \rH^0(\rJ(C) \times \rJ(C); \cO(1,1) \otimes \cO(-X)).
\]
\end{enumerate}
\end{theorem}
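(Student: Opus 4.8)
The plan is to get (a) directly from stability, reduce (b) to a single pointwise incidence statement about the kernels of the forms $\Phi_\gamma$, and then close the argument with the dimension count $\rh^0(\rJ(C)\times\rJ(C);\cO(1,1)\otimes\cO(-X))=9$ recorded above. For (a), note that by construction $W_\gamma$ is the image of the contraction map $V_9^*\to\bigwedge^2 V_9$, $\xi\mapsto \gamma(\xi,-,-)$, so $\dim W_\gamma=9-\dim\{\xi:\gamma(\xi,-,-)=0\}$. If some nonzero $\xi$ satisfied $\gamma(\xi,-,-)=0$, then $\gamma$ would lie in $\bigwedge^3(\ker\xi)$ for the hyperplane $\ker\xi\subset V_9$; scaling the complementary line by a suitable one-parameter subgroup of $\SL(V_9)$ would then drive $\gamma$ to $0$, contradicting the stability of $\gamma=\gamma_{(C,P)}$ (Proposition~\ref{prop:gamma-stable}). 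Hence the contraction is injective and $\dim W_\gamma=9$.

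For (b) I would first convert ``$W_\gamma$ vanishes at $(\cL_1,\cL_2)$'' into a vector condition. Writing $\mathrm{ev}_\cM\in V_9^*$ for the functional attached to $\cM\in\rJ^1(C)$ under the theta embedding $\rJ^1(C)\hookrightarrow\bP(V_9^*)$, the value of $\gamma(\eta,-,-)\in\bigwedge^2 V_9\subset V_9\otimes V_9=\rH^0(\rJ^1\times\rJ^1;\cO(1,1))$ at $(\cM_1,\cM_2)$ is $\gamma(\eta,\mathrm{ev}_{\cM_1},\mathrm{ev}_{\cM_2})$. Thus every element of $W_\gamma$ vanishes there exactly when $\gamma(\mathrm{ev}_{\cM_1},\mathrm{ev}_{\cM_2},-)=0$ in $V_9$, i.e. when $\mathrm{ev}_{\cM_2}\in\ker\Phi_\gamma(\mathrm{ev}_{\cM_1})$. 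Since $\hom(\cL_1,\cL_2(P))\neq 0$ forces $\cL_2=\cL_1\otimes\cO(Q-P)$ for a unique $Q\in C$, transporting $X$ to $\rJ^1(C)$ shows that (b) is equivalent to the incidence statement: for every $\cM\in\rJ^1(C)$ and every $Q\in C$, one has $\mathrm{ev}_{\cM(Q-P)}\in\ker\Phi_\gamma(\mathrm{ev}_\cM)$.

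Proving this incidence statement is the main obstacle. The point $\mathrm{ev}_\cM$ lies on $X_\gamma\cong\rJ^1(C)$, where $\Phi_\gamma$ has rank $4$, so $\bP(\ker\Phi_\gamma(\mathrm{ev}_\cM))$ is a $\bP^4$; meanwhile the translated curve $C_\cM=\{\mathrm{ev}_{\cM(Q-P)}:Q\in C\}$ is the image of $C$ in $\rJ^1(C)$ embedded by the degree-$6$ bundle $(3\Theta)|_{C_\cM}$, and so also spans a $\bP^4$, which makes the desired containment dimensionally consistent (and suggests $\bP(\ker\Phi_\gamma(\mathrm{ev}_\cM))$ is exactly the span of $C_\cM$). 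I would prove the containment by identifying the kernel bundle of $\Phi_\gamma$ along $X_\gamma$ with the family over $\rJ^1(C)$ of hyperplanes annihilating $C_\cM$, using the trace-form origin of $\gamma$: contracting $\gamma$ twice against theta functionals reproduces, on the stable locus, the trace pairing on sections of the relevant degree-$2$ twists of $\pgl(\cE)\otimes\omega_C$, and the hypothesis $\hom(\cL_1,\cL_2(P))\neq 0$ is precisely what makes that pairing degenerate. Because every object here is flat over $\bZ$, it suffices to verify the statement over a characteristic-$0$ field—where it is the classical fact that the singular locus of the Coble cubic $Y_\gamma$ is swept out by the embedded curve—and then propagate it by flatness, just as in the DVR arguments used earlier in the paper.

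Finally I would assemble the pieces. The incidence statement gives $W_\gamma\subseteq\rH^0(\rJ(C)\times\rJ(C);\cO(1,1)\otimes\cO(-X))$; by (a) the left-hand side has dimension $9$, and the right-hand side has dimension $9$ by the computation recorded above, so the inclusion is an equality. This yields both the vanishing on $X$ and the identification of $W_\gamma$, completing (b).
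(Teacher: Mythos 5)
Your part (a) is correct and is a legitimate alternative to the paper's argument: the paper rules out a kernel of the contraction $V_9^*\to\bigwedge^2V_9$ by noting that $\gamma\in\bigwedge^3V_8$ would force $D_\gamma$ to contain the hyperplane $\bP(V_8)$, contradicting irreducibility (Corollary~\ref{cor:irred}, Remark~\ref{rmk:char2}), whereas you exhibit a destabilizing one-parameter subgroup and invoke Proposition~\ref{prop:gamma-stable}. Both inputs are available at this point, and your route is if anything more direct. Your reformulation of (b) as the pointwise condition $\gamma(\mathrm{ev}_{\cM},\mathrm{ev}_{\cM(Q-P)},-)=0$, and the concluding dimension count against $\rh^0(\rJ(C)\times\rJ(C);\cO(1,1)\otimes\cO(-X))=9$, also agree with the paper.

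The gap is in your proof of the incidence statement, which is the heart of (b). First, you assume that the theta-embedded $\rJ^1(C)\subset\bP(V_9^*)$ coincides as a subvariety with the rank-$4$ locus $X_\gamma$ of $\Phi_\gamma$: without this you cannot even assert that $\ker\Phi_\gamma(\mathrm{ev}_\cM)$ is five-dimensional. Theorem~\ref{thm:w39-main} only says that $X_\gamma$ is abstractly the Jacobian of \emph{some} genus $2$ curve $C_\gamma$; identifying $C_\gamma$ with $C$ and the embedding $X_\gamma\subset\bP(V_9^*)$ with the $|3\Theta|$-embedding of $\rJ^1(C)$ is exactly the kind of comparison this appendix is meant to enable, so invoking it here is circular, or at best an unproven input. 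Second, the step you call the main obstacle --- identifying the kernel bundle along $X_\gamma$ with the linear spans of the translated curves because ``the trace pairing becomes degenerate'' --- is only gestured at, and the fallback to ``the classical fact in characteristic $0$ plus flatness'' again presupposes that $Y_\gamma$ is the Coble cubic of $C$ in its standard coordinates. The paper supplies the missing mechanism on the other side of the duality: for a suitable stable $\cE\in\Theta_{\cL_1;\SU_3(C)}\cap\Theta_{\cL_2;\SU_3(C)}$ (Lemma~\ref{lem:generic-pair}) the fiber of $\gamma$ is the trace form on $\rH^0(C;\pgl(\cE)\otimes\omega_C)$, the conormal functionals $\lambda_{\cL_i}$ lift to the compositions $\cE\to\cL_i(P)\to\cE\otimes\omega_C$, and $\hom(\cL_1,\cL_2(P))\ne 0$ forces these two twisted endomorphisms to commute (Lemma~\ref{lem:cL-properties}), so that $\trace([\tilde{\lambda}_{\cL_1},\tilde{\lambda}_{\cL_2}]\alpha)$ vanishes identically. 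Some concrete argument of this kind is what your sketch is missing.
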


If $\cL$ is a line bundle of degree $0$ on $C$, then the locus 
\begin{align} \label{eqn:theta}
\Theta_{\cL;\SU_3(C)} = \{\cE\in \SU_3(C) \mid \hom_C(\cL,\cE(P))\ne 0\}
\end{align}
is a double cover of a hyperplane in $\bP^8$. In particular, it is smooth away from the ramification locus, so that $\cL$ determines a codimension $1$ subspace of the tangent space at $\cE$, and thus an element $\lambda_\cL$ of the cotangent space (modulo scalars). 

Generically, $\Theta_\cE$ is a reduced divisor of $\rJ^1(C)$, and so $\dim\hom(\cL,\cE(P))=1$. By Serre duality and Riemann--Roch, $\dim\hom(\cE,\cL(P))=1$, and thus for generic $\cE$, we have a unique (modulo scalars) composition
\[
\tilde{\lambda}_\cL \colon \cE\to \cL(P)\to \cE(2P)\cong \cE\otimes\omega_C.
\]

\begin{lemma}
Assume that $\dim \hom(\cE, \cL(P)) = 1$. The image of $\tilde{\lambda}_\cL$ under 
\[
\rH^0(C; \fgl(\cE) \otimes \omega_C) \to \rH^0(C; \pgl(\cE) \otimes \omega_C)
\]
is $\lambda_\cL$ $($up to nonzero scalar$)$.
\end{lemma}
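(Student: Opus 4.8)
The plan is to compute the differential of the theta divisor $\Theta_{\cL;\SU_3(C)}$ at $\cE$ and identify it, via a cyclic-trace identity, with pairing against $\tilde\lambda_\cL$. Set $\cF = \cHom(\cL,\cE(P)) = \cL^{-1}\otimes\cE(P)$. Since $\deg\cE = 0$ and $\omega_C\cong\cO(2P)$, one checks $\chi(\cF) = 0$, and Serre duality gives $\rH^1(C;\cF)^*\cong\hom(\cE,\cL(P))$ because $\cF^*\otimes\omega_C\cong\cHom(\cE,\cL(P))$. By hypothesis $\hom(\cE,\cL(P))$ is one-dimensional, so $\rh^1(\cF)=\rh^0(\cF)=1$; thus $\Theta_{\cL;\SU_3(C)} = \{\cE\mid\rh^0(\cF)\ne 0\}$ is a smooth determinantal theta divisor near $\cE$, and $\lambda_\cL$ is by definition its conormal direction, i.e.\ the differential of its local defining equation as a functional on $T_\cE\SU_3(C)=\rH^1(C;\pgl(\cE))$. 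I fix the generators $s\colon\cL\to\cE(P)$ of $\rH^0(\cF)$ and $t\colon\cE\to\cL(P)$ of $\rH^1(\cF)^*$, so that $\tilde\lambda_\cL = s(P)\circ t$.

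Next I would invoke the standard variational formula for a determinantal divisor. Representing $R\pi_*\cF$ near $\cE$ by a two-term complex $[\cK^0\to\cK^1]$ of bundles of equal rank with kernel $\rH^0(\cF)$ and cokernel $\rH^1(\cF)$, the differential of the determinant in a direction $v\in\rH^1(\pgl(\cE))$ is, up to a nonzero scalar, the composite $\rH^0(\cF)\to\rH^1(\cF)$ given by cup product with the Kodaira--Spencer class of the induced deformation of $\cF$. As that deformation is induced from the one of $\cE$ by $v$, this class is just $v$ acting on the $\cE(P)$ factor, so the cup product sends $s$ to $v\cup s = v(P)\circ s\in\rH^1(\cF)$. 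Reading the result against $t$, the differential of $\Theta_{\cL;\SU_3(C)}$ at $\cE$ is $v\mapsto\langle t,\,v\cup s\rangle$ up to a nonzero scalar, where $\langle\cdot,\cdot\rangle$ is the Serre pairing $\rH^0(\cF^*\otimes\omega_C)\times\rH^1(\cF)\to\rH^1(\omega_C)\cong\bk$.

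The heart of the argument is that both relevant quantities are traces of the same triple composition run around a loop. Unwinding the Serre pairing through the isomorphism $\cF^*\otimes\omega_C\cong\cHom(\cE,\cL(P))$ gives $\langle t,\,v\cup s\rangle = \trace_\cL\!\big(t(P)\circ v(P)\circ s\big)$, the composition $\cL\xrightarrow{s}\cE(P)\xrightarrow{v(P)}\cE(P)\xrightarrow{t(P)}\cL\otimes\omega_C$. On the other hand, the Serre pairing of $v$ with $\tilde\lambda_\cL$ on $\SU_3(C)$ is $\langle v,\tilde\lambda_\cL\rangle = \trace_\cE\!\big(v\cup(s(P)\circ t)\big)$, the composition $\cE\xrightarrow{t}\cL(P)\xrightarrow{s(P)}\cE\otimes\omega_C\xrightarrow{v}\cE\otimes\omega_C$. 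These two traces are cyclic rotations of the single composite $v\circ s(P)\circ t$ (there is one degree-one class $v$ and two degree-zero classes $s,t$, so no Koszul sign intervenes), hence equal. Therefore $v\mapsto\langle v,\tilde\lambda_\cL\rangle$ is, up to scalar, the differential of $\Theta_{\cL;\SU_3(C)}$ at $\cE$.

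Finally, pairing against $v\in\rH^1(\pgl(\cE))$ only detects the image of $\tilde\lambda_\cL$ in the cotangent space $\rH^0(C;\pgl(\cE)\otimes\omega_C)$, since the trace (scalar) part of $\tilde\lambda_\cL$ pairs trivially with traceless deformations. Thus that image is the conormal direction $\lambda_\cL$ up to a nonzero scalar, which is all that is claimed, as $s$, $t$, and the chosen identification $\rH^0(P;\omega_C^{\otimes 3})\cong\bk$ are only defined up to scalars. I expect the main obstacle to be making the variational formula and the cyclic-trace identity precise with the twists by $\cO(P)$ bookkept correctly through $\omega_C\cong\cO(2P)$; these steps are standard and characteristic-free, and the projection to $\pgl(\cE)$ absorbs any subtlety in characteristic $3$.
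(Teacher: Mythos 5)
Your argument is correct and is essentially the paper's own proof: both identify the differential of $\Theta_{\cL;\SU_3(C)}$ at $\cE$ with cup product against $t\in\hom(\cE,\cL(P))$ (the paper cites Mukai's cocycle computation where you invoke the variational formula for the determinantal divisor) and then use associativity/cyclicity of the trace pairing to replace $t$ by $s(P)\circ t=\tilde{\lambda}_\cL$, the paper phrasing this as the statement that composition with $s(P)$ gives an isomorphism $\ext^1(\cE,\cL(P))\to\ext^1(\cE,\cE\otimes\omega_C)$ so the two kernels coincide. The only quibble is that the tangent space is $\rH^1(C;\fsl(\cE))$ rather than $\rH^1(C;\pgl(\cE))$ (it is the cotangent space that involves $\pgl$), but this does not affect the argument.
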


\begin{subeqns}
\begin{proof}
A standard calculation with cocycles over the dual numbers (see \cite[Proposition 2.6]{mukai}) shows that the tangent space of $\Theta_{\cL;\SU_3(C)}$ at $\cE$ is the kernel of the cup product with any nonzero element in $\hom(\cE, \cL(P))$:
\begin{align} \label{eqn:cup-prod}
\rH^1(C; \fsl(\cE)) \subset \ext^1(\cE,\cE)\to \ext^1(\cE,\cL(P)).
\end{align}
Twist the map $\cL(P) \to \cE(2P)$ by $\omega_C^{-1}$; this gives a nonzero map $\hom(\cE, \cE) \to \hom(\cL(-P), \cE)$, which is an isomorphism since both are $1$-dimensional ($\dim \hom(\cE,\cE) = 1$ since $\cE$ is stable). By Serre duality, the dual map $\ext^1(\cE, \cL(P)) \to \ext^1(\cE, \cE \otimes \omega_C)$ is also an isomorphism. So being in the kernel of \eqref{eqn:cup-prod} is equivalent to being in the kernel of the cup product map with $\tilde{\lambda}_\cL$:
\[
\rH^1(C; \fsl(\cE)) \subset \ext^1(\cE,\cE)\to \ext^1(\cE,\cE \otimes \omega_C).
\]
In other words, $\tilde{\lambda}_\cL$ induces a linear functional on $\rH^1(C; \fsl(\cE))$ whose kernel is the tangent space to $\Theta_{\cL; \SU_3(C)}$ and so agrees with $\lambda_\cL$ up to nonzero scalar multiple.
\end{proof}
\end{subeqns}

\begin{lemma} \label{lem:cL-properties}
Let $\cL_1, \cL_2$ be line bundles such that $\dim \hom(\cL_i, \cE(P)) = 1$ so that $\tilde{\lambda}_{\cL_i}(P)$ are defined.
\begin{enumerate}[\rm (1)]
\item $\tilde\lambda_{\cL_i}(P)$ is a rank $\le 1$ endomorphism of the fiber $\cE_P$.
\item If $\dim\hom(\cL_1,\cL_2(P))=1$, then $\tilde\lambda_{\cL_1}(P)$ and $\tilde\lambda_{\cL_2}(P)$ commute.
\end{enumerate}
\end{lemma}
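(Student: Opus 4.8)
The plan is to read both statements off the factorization $\tilde\lambda_{\cL_i}=b_i\circ a_i$, where I fix nonzero maps $a_i\colon\cE\to\cL_i(P)$ and $b_i\colon\cL_i(P)\to\cE\otimes\omega_C$ (the latter obtained from $\hom(\cL_i,\cE(P))$ by twisting with $\cO(P)$ and using $\omega_C\cong\cO(2P)$); both spaces are one-dimensional by hypothesis, so $a_i,b_i$ are determined up to scalar. Part (1) is then immediate: at the point $P$ the operator $\tilde\lambda_{\cL_i}(P)=b_i(P)\circ a_i(P)$ factors through the one-dimensional fiber $(\cL_i(P))_P$, hence has rank at most $1$ as an endomorphism of $\cE_P$ (after trivializing $(\omega_C)_P$, which does not affect the rank).

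For part (2) I would prove the stronger statement that both products $\tilde\lambda_{\cL_2}(P)\tilde\lambda_{\cL_1}(P)$ and $\tilde\lambda_{\cL_1}(P)\tilde\lambda_{\cL_2}(P)$ vanish; commutativity follows at once. Consider the first product. Expanding $\tilde\lambda_{\cL_j}=b_j\circ a_j$, the interior composite (after twisting $a_2$ by $\omega_C$) is a map $a_2\circ b_1\colon\cL_1(P)\to\cL_2(P)\otimes\omega_C$, i.e.\ a global section of the degree-$2$ line bundle $M:=\cL_2\otimes\cL_1^{-1}\otimes\omega_C$. If this section vanishes at $P$, then $a_2(P)\circ b_1(P)=0$, and therefore the entire fiberwise product $\tilde\lambda_{\cL_2}(P)\tilde\lambda_{\cL_1}(P)$ is zero.

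The vanishing is exactly where the hypothesis enters. I would argue that the evaluation map $\rH^0(C;M)\to M_P$ is identically zero, equivalently that $\rh^0(M(-P))=\rh^0(M)$. Here $\rh^0(M)=1$: by Riemann--Roch $\chi(M)=1$, while $\rh^1(M)=\rh^0(\cL_1\cL_2^{-1})=0$ as long as $\cL_1\not\cong\cL_2$. On the other hand $M(-P)\cong\cL_2\cL_1^{-1}(P)$ (using $\omega_C(-P)\cong\cO(P)$), so $\rh^0(M(-P))=\dim\hom(\cL_1,\cL_2(P))=1$ by hypothesis. Thus every global section of $M$ vanishes at $P$, in particular $a_2\circ b_1$, giving $\tilde\lambda_{\cL_2}(P)\tilde\lambda_{\cL_1}(P)=0$. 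For the opposite product I run the identical argument with the roles of $1$ and $2$ exchanged, using $\cL_1\cL_2^{-1}\otimes\omega_C$ in place of $M$; the required input $\dim\hom(\cL_2,\cL_1(P))=1$ follows from the hypothesis by Riemann--Roch and Serre duality, since $\hom(\cL_1,\cL_2(P))$ and $\hom(\cL_2,\cL_1(P))$ both have Euler characteristic $0$ and each is dual to the other's $\rH^1$.

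The main obstacle I anticipate is keeping the twists by $\omega_C\cong\cO(2P)$ consistent, so that the interior composite genuinely lands in the stated degree-$2$ bundle and its vanishing at $P$ really forces the fiberwise product to vanish. A minor point to dispatch separately is the degenerate case $\cL_1\cong\cL_2$: there $\rh^0(M)=2$ and the evaluation argument breaks down, but in that case $\tilde\lambda_{\cL_1}(P)$ and $\tilde\lambda_{\cL_2}(P)$ are literally the same operator and commute trivially.
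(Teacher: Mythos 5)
Your proposal is correct and follows essentially the same route as the paper: part (1) from the factorization through a line bundle, and part (2) by showing both products vanish because the (one-dimensional) space of maps $\cL_1(P)\to\cL_2(P)\otimes\omega_C$ consists of sections vanishing at $P$, which is exactly the paper's observation that the composition factors through $\cL_2(2P)\to\cL_2(3P)$. The degenerate case $\cL_1\cong\cL_2$ is dispatched the same way in both arguments.
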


\begin{proof}
(1) is an immediate consequence of the fact that $\tilde\lambda_{\cL_i}$ factors through a line bundle. 

(2) is vacuous if $\cL_1 \cong \cL_2$. Otherwise, it suffices to show that both compositions vanish. Consider the composition
\[
\cE\to \cL_1(P)\to \cE(2P)\to \cL_2(3P)\to \cE(4P).
\]
Since $\cL_1\not\cong \cL_2$, Riemann--Roch gives $\dim\hom(\cL_1(P), \cL_2(3P)) = 1$. By our assumption, it follows that the above composition factors through $\cL_2(2P) \to \cL_2(3P)$, which is $0$ when restricted to $P$. The other composition is $0$ by the same argument if we swap $\cL_1$ and $\cL_2$; we note that $\dim \hom(\cL_2, \cL_1(P)) = 1$ by Serre duality and Riemann--Roch.
\end{proof}

\begin{lemma} \label{lem:generic-pair}
For a generic pair $(\cL_1,\cL_2) \in \rJ(C) \times \rJ(C)$ with $\dim \hom(\cL_1,\cL_2(P))=1$, there exists a stable rank $3$ bundle $\cE$ with $\cE \not\cong \iota^* \cE^*$ such that $\dim\hom(\cL_1,\cE(P)) = \dim\hom(\cL_2,\cE(P))=1$.  
\end{lemma}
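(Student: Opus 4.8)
The plan is to bypass an explicit construction of $\cE$ and instead fix a \emph{generic} bundle, recover the pairs from it, and conclude by a dimension count on an incidence variety. First note that $\hom(\cL_i,\cE(P))\cong\rH^0(C;\cE\otimes\cL_i^{-1}(P))$, so the condition $\dim\hom(\cL_i,\cE(P))=1$ says that $\cE$ lies on the theta divisor $\Theta_{\cL_i;\SU_3(C)}$ of \eqref{eqn:theta} and that the relevant section space does not jump. That divisor is $\theta^{-1}(H_{\cL_i})$ for a hyperplane $H_{\cL_i}\subset\bP^8$, where $\cL\mapsto H_\cL$ is, up to the isomorphism $\cL\mapsto\cL^{-1}(P)$, the projective embedding $\rJ^1(C)\hookrightarrow|3\Theta|^*$; in particular it is injective, so $\cL_1\neq\cL_2$ forces $H_{\cL_1}\neq H_{\cL_2}$. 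Finally, the locus where $\cE\cong\iota^*\cE^*$ is the ramification divisor of $\theta$ (Proposition~\ref{prop:surj-deg2}), of codimension $1$ in $\SU_3(C)$, so a generic stable bundle is automatically not self-dual.

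Fix a generic $\cE\in\SU_3(C)$: it is stable, satisfies $\cE\not\cong\iota^*\cE^*$, and $\Theta_\cE\subset\rJ^1(C)$ is a reduced divisor linearly equivalent to $3\Theta$ (Lemma~\ref{lem:theta-map}) along which $\rh^0(\cE\otimes\cN)=1$ off a finite set. Producing a pair $(\cL_1,\cL_2)\in X$ witnessed by $\cE$ amounts to choosing $\cN_1=\cL_1^{-1}(P)\in\Theta_\cE$ and a point $Q\in C$ with $\cN_2=\cN_1\otimes\cO(P-Q)$ again in $\Theta_\cE$. I would therefore consider the morphism $\Phi\colon\Theta_\cE\times C\to\rJ^1(C)$, $(\cN_1,Q)\mapsto\cN_1\otimes\cO(P-Q)$, whose image is the translate of the curve $\Theta_\cE$ by the curve $\{\cO(P-Q)\}_{Q\in C}\subset\rJ(C)$. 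This image is all of $\rJ^1(C)$ unless both curves are translates of a common elliptic subgroup, which is excluded since $(\Theta_\cE)^2=(3\Theta)^2=18\neq0$. Hence $\Phi$ is dominant and $\Phi^{-1}(\Theta_\cE)$ is a nonempty curve; discarding its finitely many points where some $\rh^0$ jumps yields a one-parameter family of pairs $(\cL_1,\cL_2)\in X$ (with $Q\neq P$, hence $\cL_1\neq\cL_2$) for which this $\cE$ verifies the lemma.

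Now assemble the incidence variety $\cI$ of triples $(\cE,\cL_1,\cL_2)$ with $\cE$ stable and not self-dual, $(\cL_1,\cL_2)\in X$, and $\dim\hom(\cL_i,\cE(P))=1$. The previous paragraph shows the projection $\cI\to\SU_3(C)$ is dominant with one-dimensional generic fibre, so $\dim\cI=8+1=9$. Since $\dim X=3$, it suffices to prove that the second projection $p\colon\cI\to X$ is dominant. Suppose not; then its image has dimension $\le2$, so a generic fibre of $p$ has dimension $\ge9-2=7$. But any fibre of $p$ over a pair in the image is contained in $\Theta_{\cL_1;\SU_3(C)}\cap\Theta_{\cL_2;\SU_3(C)}=\theta^{-1}(H_{\cL_1}\cap H_{\cL_2})$, and since $\theta$ is finite and $H_{\cL_1}\neq H_{\cL_2}$ (the image is not contained in the diagonal, so $\cL_1\neq\cL_2$ generically there), this preimage has dimension exactly $6$. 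This contradiction forces $p$ to be dominant, and a good $\cE$ therefore exists over a generic pair in $X$.

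The main obstacle is precisely this dominance of $p$; the resolution above turns on recognizing the two theta divisors as preimages of distinct hyperplanes under the finite map $\theta$, which pins their intersection to dimension exactly $6$ and defeats the fibre-dimension estimate forced by a non-dominant $p$. The subsidiary inputs — that a generic stable $\cE$ avoids the ramification divisor, that $\Theta_\cE$ is not an elliptic translate, and that the jumping locus $\{\rh^0(\cE\otimes\cN)\ge2\}$ is finite so that the generic member of the family has one-dimensional section spaces — are routine, the last following from reducedness of $\Theta_\cE$ for generic $\cE$.
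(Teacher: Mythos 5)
Your proposal is correct in its essential strategy, but it reaches the conclusion by a genuinely different mechanism than the paper. The paper also works with the incidence variety of triples $(\cL_1,\cL_2,\cE)$, but it fixes a generic \emph{pair} $(\cL_1,\cE)$ with $\dim\hom(\cL_1,\cE(P))=1$ and $\cE\not\cong\iota^*\cE^*$, produces $\cL_2$ from the intersection number $\Theta_{\cL_1}\cdot 3\Theta=6>0$ on $\rJ^1(C)$ (taking care, via a tangency argument at the point $\cL_1$, that the intersection is not concentrated at the ``diagonal'' solution $\cL_2=\cL_1$), and then concludes by irreducibility of a component of the triple variety dominating the divisor that all the open conditions can be imposed simultaneously. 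You instead fix a generic $\cE$, produce a one-dimensional family of pairs from $(3\Theta)^2=18>0$, and force dominance of the projection to the space of pairs by a fiber-dimension count whose key input is that $\Theta_{\cL_1;\SU_3(C)}\cap\Theta_{\cL_2;\SU_3(C)}=\theta^{-1}(H_{\cL_1}\cap H_{\cL_2})$ has dimension exactly $6$ because $\theta$ is finite and $\cL\mapsto H_\cL$ is injective. The paper's route is shorter and needs only one intersection number; yours buys a cleaner separation between ``existence for one $\cE$'' and ``genericity over pairs,'' at the cost of importing the finiteness of $\theta$ and the hyperplane description of the theta divisors.

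Two points in your write-up carry more weight than their phrasing suggests and should be expanded. First, the parenthetical claim that the image of $p$ is not contained in the diagonal is essential: $\Phi^{-1}(\Theta_{\cE})$ always contains the component $\Theta_{\cE}\times\{P\}$, which maps entirely into the diagonal, and over a diagonal point the fiber of $p$ sits in the $7$-dimensional $\theta^{-1}(H_{\cL})$, so no contradiction arises there; you must therefore run the dimension count on a $9$-dimensional component of $\cI$ that you have verified meets the locus $Q\neq P$, which your intersection-number argument for each fixed $Q\neq P$ does supply. Second, ``discarding finitely many points where some $\rh^0$ jumps'' requires that the jumping set $S\subset\Theta_{\cE}$ pulls back to a finite subset of the curve $\Phi^{-1}(\Theta_{\cE})$; this needs $\Phi$ to be finite (equivalently, that $\Theta_{\cE}$ contains no translate of the Abel--Jacobi curve), which holds for generic $\cE$ but is not automatic and should be recorded as one of the open conditions you impose.
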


\begin{proof}
Let $X$ be the set of triples $(\cL_1,\cL_2,\cE) \in \rJ(C) \times \rJ(C) \times \SU_3(C)$ where $\hom(\cL_1, \cL_2(P)) \ne 0$, $\hom(\cL_1, \cE(P)) \ne 0$, and $\hom(\cL_2, \cE(P)) \ne 0$.  Let $X_0$ be an irreducible component of $X$ whose image in $\rJ(C)\times \rJ(C)$ dominates the divisor $\{(\cL_1, \cL_2) \mid \hom_C(\cL_1, \cL_2(P)) \ne 0\}$.

Now, let $(\cL_1,\cE)$ be a pair with $\dim\hom(\cL_1,\cE(P))=1$ and $\dim\hom(\cE,\iota^*\cE^*)=0$ (such a pair exists, since each condition is separately a nonempty open condition on the locus $\{(\cL_1, \cE) \mid \hom(\cL_1, \cE(P)) \ne 0\}$ inside $\rJ(C) \times \SU_3(C)$).  The loci $\{\cL_2 \mid \hom(\cL_1,\cL_2(P))\ne 0\}$ and $\{\cL_2 \mid \hom(\cL_2,\cE(P))\ne 0\}$ are sections of $\Theta_{\cL_1}$ and $3\Theta$, respectively, and thus have intersection multiplicity $3 \Theta^2 = 6$.  Moreover, there exists a section of $3\Theta$ meeting the given section of $\Theta_{\cL_1}$ at the point representing $\cL_1$ with multiplicity $1$ (take any hyperplane through $\cL_1$ not containing the tangent vector), so this is true for the generic such section, and can be imposed as a further nonempty open condition on $\cE$. We thus see that any such pair $(\cL_1,\cE)$ can be extended to a triple $(\cL_1,\cL_2,\cE)$ in $X_0$ (note that $\hom(\cL_1, \cL_2(P)) \ne 0$ implies that $\dim \hom(\cL_1, \cL_2(P)) = 1$).

But then the locus in $X_0$ for which 
\[
\dim\hom(\cL_1,\cL_2(P)) = \dim\hom(\cL_1,\cE(P)) = \dim\hom(\cL_2,\cE(P)) = 1
\]
and $\dim\hom(\cE,\iota^*\cE^*)=0$ is an intersection of nonempty open sets, and is thus (by irreducibility) nonempty.
\end{proof}

\begin{proof}[Proof of Theorem~\ref{thm:wgamma-eqn}]
(a) If not, then there is an $8$-dimensional subspace $V_8 \subset V_9$ so that $\gamma \in \bigwedge^3 V_8$. In this case, $D_\gamma$ contains the hyperplane $\bP(V_8)$. But this cannot happen: in odd characteristic, $D_\gamma$ is a reduced irreducible sextic by Corollary~\ref{cor:irred} and Theorem~\ref{thm:coble-duality}, while in even characteristic, it is the square of a reduced irreducible cubic by Remark~\ref{rmk:char2}.

(b) A line bundle $\cL \in \rJ(C)$ corresponds to a hyperplane in $V_9$. The preimage of this hyperplane is $\Theta_{\cL; \SU_3(C)}$ as defined in \eqref{eqn:theta} (recall that we used $P$ to identify $\rJ(C)$ with $\rJ^1(C)$). So it suffices to show that for generic choice of $(\cL_1, \cL_2)$, the linear map $\gamma(\lambda_{\cL_1}, \lambda_{\cL_2}, -)$ is identically $0$. Furthermore, since $\gamma$ is alternating, it suffices to check this on the subspace $\Theta_{\cL_1; \SU_3(C)} \cap \Theta_{\cL_2; \SU_3(C)}$. By Lemma~\ref{lem:generic-pair}, there is a stable bundle $\cE \in \SU_3(C)$ with $\cE \not\cong \iota^* \cE^*$ such that 
\[
\dim\hom(\cL_1,\cL_2(P)) = \dim\hom(\cL_1,\cE(P)) = \dim\hom(\cL_2,\cE(P))=1.
\]
By Lemma~\ref{lem:cL-properties}, $\tilde{\lambda}_{\cL_i} \colon \cE \to \cE \otimes \omega_C$ commute. So, for any element $\alpha \in \rH^0(C; \pgl(\cE)\otimes\omega_C)$, the section $\trace([\lambda_{\cL_1},\lambda_{\cL_2}] \alpha)$ vanishes at $P$. So $\gamma(\lambda_{\cL_1}, \lambda_{\cL_2}, -)$ vanishes for generic $\cE$ in $\Theta_{\cL_1;\SU_3(C)}\cap \Theta_{\cL_2;\SU_3(C)}$, so we are done.
\end{proof}

\end{document}